\newcommand{\norm}[1]{||#1||}
\newcommand{\ucite}[1]{\cite{#1}}
\begin{document}
\setlength{\baselineskip}{16pt}

\parindent 0.5cm
\evensidemargin 0cm \oddsidemargin 0cm \topmargin 0cm \textheight
22cm \textwidth 16cm \footskip 2cm \headsep 0cm

\newtheorem{theorem}{Theorem}[section]
\newtheorem{lemma}[theorem]{Lemma}
\newtheorem{proposition}[theorem]{Proposition}
\newtheorem{definition}{Definition}[section]
\newtheorem{example}{Example}[section]
\newtheorem{corollary}[theorem]{Corollary}

\newtheorem{remark}{Remark}[section]
\newtheorem{property}[theorem]{Property}
\numberwithin{equation}{section}
\newtheorem{mainthm}{Theorem}
\newtheorem{mainlem}{Lemma}

\numberwithin{equation}{section}

\def\p{\partial}
\def\I{\textit}
\def\R{\mathbb R}
\def\C{\mathbb C}
\def\u{\underline}
\def\l{\lambda}
\def\a{\alpha}
\def\O{\Omega}
\def\e{\epsilon}
\def\ls{\lambda^*}
\def\D{\displaystyle}
\def\wyx{ \frac{w(y,t)}{w(x,t)}}
\def\imp{\Rightarrow}
\def\tE{\tilde E}
\def\tX{\tilde X}
\def\tH{\tilde H}
\def\tu{\tilde u}
\def\d{\mathcal D}
\def\aa{\mathcal A}
\def\DH{\mathcal D(\tH)}
\def\bE{\bar E}
\def\bH{\bar H}
\def\M{\mathcal M}
\renewcommand{\labelenumi}{(\arabic{enumi})}

\def\disp{\displaystyle}
\def\undertex#1{$\underline{\hbox{#1}}$}
\def\card{\mathop{\hbox{card}}}
\def\sgn{\mathop{\hbox{sgn}}}
\def\exp{\mathop{\hbox{exp}}}
\def\OFP{(\Omega,{\cal F},\PP)}
\newcommand\JM{Mierczy\'nski}
\newcommand\RR{\ensuremath{\mathbb{R}}}
\newcommand\CC{\ensuremath{\mathbb{C}}}
\newcommand\QQ{\ensuremath{\mathbb{Q}}}
\newcommand\ZZ{\ensuremath{\mathbb{Z}}}
\newcommand\NN{\ensuremath{\mathbb{N}}}
\newcommand\PP{\ensuremath{\mathbb{P}}}
\newcommand\abs[1]{\ensuremath{\lvert#1\rvert}}

\newcommand\normf[1]{\ensuremath{\lVert#1\rVert_{f}}}
\newcommand\normfRb[1]{\ensuremath{\lVert#1\rVert_{f,R_b}}}
\newcommand\normfRbone[1]{\ensuremath{\lVert#1\rVert_{f, R_{b_1}}}}
\newcommand\normfRbtwo[1]{\ensuremath{\lVert#1\rVert_{f,R_{b_2}}}}
\newcommand\normtwo[1]{\ensuremath{\lVert#1\rVert_{2}}}
\newcommand\norminfty[1]{\ensuremath{\lVert#1\rVert_{\infty}}}

\title{Structure of $\omega$-Limit Sets for Almost-Periodic Parabolic Equations on $S^1$ with Reflection Symmetry}

\author {
\\
Wenxian Shen\\
Department of Mathematics and Statistics\\
 Auburn University, Auburn, AL 36849, USA
\\
\\
Yi Wang\thanks{Partially supported by NSF of China No.11371338, 11471305, Wu Wen-Tsun Key Laboratory and the Fundamental
Research Funds for the Central Universities.}  $\,\,$ and $\,$ Dun Zhou\thanks{Partially supported by NSF of China No.11601498.}\\
Department of Mathematics\\
 University of Science and Technology of China
\\ Hefei, Anhui, 230026, P. R. China
\\
\\
}
\date{}

\maketitle

\begin{abstract}
The structure of the $\omega$-limit sets is thoroughly investigated for the skew-product semiflow which is generated by a scalar reaction-diffusion equation
\begin{equation*}
u_{t}=u_{xx}+f(t,u,u_{x}),\,\,t>0,\,x\in S^{1}=\mathbb{R}/2\pi \mathbb{Z},
\end{equation*}
where $f$ is uniformly almost periodic in $t$ and satisfies $f(t,u,u_x)=f(t,u,-u_x)$. We show that any $\omega$-limit set $\Omega$ contains at most two minimal sets. Moreover, any hyperbolic $\omega$-limit set $\Omega$  is a spatially-homogeneous $1$-cover of hull $H(f)$.  When $\dim V^c(\Omega)=1$ ($V^c(\O)$ is the center space associated with $\O$), it is proved that either $\Omega$ is a spatially-homogeneous, or $\O$ is a spatially-inhomogeneous $1$-cover of $H(f)$.
\end{abstract}

\section{Introduction}

In this paper we investigate the dynamics of bounded solutions for the scalar reaction-diffusion equations on the circle
\begin{equation}\label{equation-1}
u_{t}=u_{xx}+f(t,u,u_{x}),\,\,t>0,\,x\in S^{1}=\mathbb{R}/2\pi \mathbb{Z},
\end{equation}
where $f:\RR\times\RR\times\RR\to \RR$ together with all its derivatives (up to order $2$) are almost periodic in $t$ uniformly for $(u,p)$ in compact subsets.

To carry out our investigation, we embed \eqref{equation-1} into a skew-product semiflow in the following way. Let $f_{\tau}(t,u,p)=f(t+\tau,u,p)(\tau \in \RR)$ be the time-translation of $f$, then the function $f$ generates a family
$\{f_{\tau}|\tau \in \mathbb{R}\}$ in the space of continuous functions $C(\mathbb{R}\times \mathbb{R} \times \mathbb{R},\mathbb{R})$ equipped with the compact open topology. By the Ascoli--Arzela theorem, the hull $H(f)$ of $f$ (i.e. the closure of $\{f_{\tau}|\tau\in \mathbb{R}\}$ in the compact open topology) is a compact metric space and every $g\in H(f)$ is uniformly almost periodic with the same regularity as $f$.  Hence, the action of time-translation $g\cdot t\equiv g_{t}\,(g\in H(f))$ defines a compact minimal flow on $H(f)$ (cf. \cite{Sell,Shen1998}). This means that $H(f)$ is the only nonempty compact subset of itself
that is invariant under the flow $g\cdot t$. As a consequence, equation \eqref{equation-1}
induces a family of equations associated to each $g\in H(f)$,
\begin{equation}\label{equation-lim1}
u_{t}=u_{xx}+g(t,u,u_{x}),\,\,\quad t\geq 0,\quad x\in S^{1}.
\end{equation}

Let $X$ be the fractional power space associated with the operator $u\rightarrow
-u_{xx}:H^{2}(S^{1})\rightarrow L^{2}(S^{1})$ such that the embedding
relation $X\hookrightarrow C^{1}(S^{1})$ is satisfied. Choose any $u\in X$, then \eqref{equation-lim1} admits (locally) a unique solution $\varphi(t,\cdot;u,g)$ in $X$ with $\varphi(0,\cdot,u,g)=u(\cdot)$. Moreover, the solution continuously depends on $g\in H(f)$ and $u\in
X$. Therefore, \eqref{equation-lim1} defines a (local) skew product semiflow $\Pi^{t}$ on $X\times
H(f)$:
\begin{equation}\label{equation-lim2}
\Pi^{t}(u,g)=(\varphi(t,\cdot;u,g),g\cdot t),\quad t\geq 0.
\end{equation}
By the standard a priori estimates for parabolic equations (see \cite{Hen}), if
$\varphi(t,\cdot;u,g) (u\in X)$ is bounded in $X$ in the existence interval of the solution, then $u$ is a globally defined classical solution. The study of the asymptotic behavior of $\varphi(t,\cdot;u,g)$ boils down to  the problem of understanding the structure of the $\omega$-limit set $\omega(u,g)$ of $\varphi(t,\cdot;u,g)$. Note that, for any $\delta>0$, $\{\varphi(t,\cdot;u,g):t\ge \delta\}$ is relatively compact in $X$. Consequently, $\omega(u,g)$ is a nonempty connected compact subset of $X\times H(f)$. It is also known that $\Pi^{t}$ restricted on the $\omega$-limit set $\omega(u,g)$ has a unique continuous backward time extension, and hence, admits a flow extension (see, e.g. \cite{hale1988asymptotic}).

When $f$ is independent of $t$,  Massatt \cite{Massatt1986} and Matano \cite{Matano} showed independently that any periodic orbit is a rotating wave $u=\phi(x-ct)$ for some $2\pi$-periodic function $\phi$ and constant $c$, and  any $\omega$-limit set is either itself a single rotating wave, or a set of equilibria differing by phase shift in $x$. Moreover, if  $f(u,u_x)=f(u,-u_x)$, then any $\omega$-limit set is just an equilibrium.  In \cite{Fiedler},
the well-known Poincar\'{e}-Bendixson type theorem has been established by Fiedler and Mallet-Paret for $f$ which is independent of $t$ but can depend on $x$.

In the case that $f$ in \eqref{equation-1} is time-periodic with period $1$ (equivalently, $H(f)$ is homeomorphic to the circle $\mathcal{T}^1=\mathbb{R}/\mathbb{Z}$), Sandstede and Fiedler \cite{SF1} showed that the  $\omega$-limit set $\omega(u)$ can be viewed as a subset of the two-dimensional torus $\mathcal{T}^1\times S^1$ carrying a linear flow. Moreover, for $f(t,u,u_x)=f(t,u,-u_x)$, Chen and Matano \cite{Chen1989160} proved that the $\omega$-limit set $\omega(u)$ of any bounded solution consists of a unique time-periodic orbit with period $1$. For the general $f$ which is time-periodic but depending on
$x$, one may refer to Tere\v{s}\v{c}\'{a}k's work \cite{Te} to see that any $\omega$-limit set of the associated Poincar\'{e} map can be imbedded into a $2$-dimensional plane; and moreover, the chaotic behavior was exhibited in \cite{SF1} for such general $f$.

In the case that $f$ in \eqref{equation-1} is time almost-periodic, the present authors \cite{SWZ} thoroughly investigated  the structure of the minimal sets generated by \eqref{equation-1}. Among others, they have proved that a minimal set $M$ is a spatially-homogeneous $1$-cover of $H(f)$ if it is hyperbolic (i.e., the dimension of the center space $V^c(M)$ associated with $M$ is $0$). When ${\rm dim}V^c(M)=1$ (resp. ${\rm dim}V^c(M)=2$ with ${\rm dim}V^u(M)$ being odd), they have obtained that either $M$ is spatially-homogeneous; or $M$ can be (resp. residually) embedded into an almost-periodically (resp. almost-automorphically) forced circle-flow $S^1\times H(f)$.
The structure of the minimal sets with higher-dimensional center spaces still remains open; and moreover, that of the $\omega$-limit sets for
\eqref{equation-1} have not been studied so far from this point of view.

 In the present paper we will focus on the structures of the $\omega$-limit sets for \eqref{equation-1} under the assumption of the reflection symmetry $f(t,u,u_x)=f(t,u,-u_x)$. For such circumstance, it is already known that any minimal set is an almost $1$-cover of $H(f)$ (see \cite{SWZ}). Note that a minimal set is just a special case of the $\omega$-limit sets. The current paper is devoted to a basic description of the structure of a general $\omega$-limit set of \eqref{equation-lim2}. More precisely, assume that $f(t,u,u_x)=f(t,u,-u_x)$ and let $(u_0,g_0)\in X\times H(f)$ be such that the motion $\Pi^{t}(u_0,g_0)$($t\ge 0$) is bounded. Then we have the following three main theorems (We write $\Omega$ for the $\omega$-limit set $\omega(u_0,g_0)$):

\bigskip

\noindent Theorem A. {\it
The $\omega$-limit set $\Omega$ contains at most two (obviously at least one) minimal sets. (See also Theorem \ref{structure}).}

\vskip 3mm
\noindent Theorem B. {\it
Any hyperbolic $\omega$-limit set $\Omega$ is a spatially-homogeneous  $1$-cover of $H(f)$. (See also Theorem \ref{hyperbolic0}).}

\vskip 3mm
\noindent Theorem C. {\it
 Assume that $\dim V^c(\Omega)=1$. Then $\Omega$ is either spatially-homogeneous or a spatially-inhomogeneous $1$-cover of $H(f)$. (See also Theorem \ref{norma-hyper}).
}
\vskip 3mm

All these structural theorems for the $\omega$-limit sets $\Omega$ of \eqref{equation-lim2} are natural generalizations from the autonomous and time-periodic cases (\cite{Massatt1986,Matano,Chen1989160,SF1}) to the time almost-periodic systems. One may further find that Theorems B\&C have ever been considered in \cite{SWZ} under the additional assumption that $\Omega$ itself is minimal. As a consequence, the structural theorems here are also extensions of \cite{SWZ} from the minimal sets to general $\omega$-limit sets for \eqref{equation-lim2}.

For separated boundary conditions, one may observe the related structural theorems for the $\omega$-limit sets, or even the $1$-covering property of the hyperbolic $\omega$-limit sets with general nonlinearity $f(t,x,u,u_x)$ (see Shen-Yi\ucite{Shen1995114,ShenYi-2,ShenYi-JDDE96,Shen1998}). However, with periodic boundary condition one can not expect the corresponding phenomena of the $\omega$-limit sets in general (see \cite{Po1,SF1,SWZ} and references therein). For instance, for system \eqref{equation-1} with $f$ being replaced by $f(x,u,u_x)$, Fiedler and Mallet-Paret \cite{Fiedler} proved that any $\omega$-limit set of \eqref{equation-1} is either a single periodic orbit or it consists of equilibria and connecting (homoclinic and heteroclinic) orbits. When $f$ is replaced by $f(t,x,u,u_x)$, it is only known that the uniformly stable minimal set is topologically
conjugate to a skew-product flow on some $\hat M\subset\RR^2\times H(f)$ (see \cite[Theorem 5.2]{SWZ}). As a consequence, the understanding of the structure of general $\omega$-limit sets of \eqref{equation-1} with $f=f(t,x,u,u_x)$ is very limited.

Theorems A and B here help us understand the structures of the $\omega$-limit sets under the assumption of the reflection-symmetry $f(t,u,u_x)=f(t,u,-u_x)$. Moreover, to the best of our knowledge, the structure of general $\omega$-limit sets with nontrivial center spaces have been hardly studied for both separated boundary and periodic boundary conditions (see some related results for the minimal sets in \cite{ShenYi-JDDE96, SWZ} and for tridiagonal competitive-cooperative systems \cite{Fangchun2013,Fangchun2015}). Theorem C seems to be the first attempt to tackle this problem for the general $\omega$-limit sets with nontrivial center spaces.

This paper is organized as follows. In section 2, we introduce some notations, relevant
definitions and preliminary results, including the almost periodic (automorphic) theory and the invariant manifolds theory for skew-product semiflows, which will be important to our proofs. In section 3, we will prove Theorem A (see the detail in Theorem \ref{structure}) for the $\omega$-limit set $\Omega$. In section 4, we will investigate the structure of the hyperbolic $\omega$-limit sets and prove Theorem B (see also Theorem \ref{hyperbolic0}). In section 5, we will focus on Theorem C (see also Theorem \ref{norma-hyper}). Finally, we will give several examples in Section 6 related to Theorem \ref{structure} and
Theorem \ref{norma-hyper}. These examples illustrate that $\Omega$ can possess one or two minimal sets (i.e., Theorem \ref{structure}(ii)-(iii) indeed may occur); and moreover, if $\dim V^c(\Omega)\geq 2$ then the conclusion of Theorem 5.1 cannot hold anymore.

\section{Preliminaries}
In this section, we summarize some preliminary materials to be used in later sections. We start by introducing some basic concepts of skew-product semiflows, etc.  Next, we recall the definitions of almost periodic (automorphic) functions  and some of their properties.  We then present some basic properties of zero numbers of solutions for linear parabolic equations. Finally, we introduce the invariant subspaces and invariant manifold theory for the linear and nonlinear parabolic equations on $S^1$.

\subsection{Basic concepts}

 Let $Y$ be a compact metric space with metric $d_{Y}$, and
$\sigma:Y\times \RR\to Y, (y,t)\mapsto y\cdot t$ be a continuous
flow on $Y$, denoted by $(Y,\sigma)$ or $(Y,\RR)$. A subset
$S\subset Y$ is {\it invariant} if $\sigma_t(S)=S$ for every $t\in
\RR$. A subset $S\subset Y$ is called {\it minimal} if it is
compact, invariant and the only non-empty compact invariant subset
of it is itself.  Every compact and $\sigma$-invariant set contains a minimal subset and a subset $S$ is minimal if and only if every trajectory is dense
in $S$. The continuous flow $(Y,\sigma)$ is called to be
{\it recurrent} or {\it minimal} if $Y$ is minimal. A pair
$y_1,y_2$ of different elements of $Y$ is {\it distal}, if there is a $\delta>0$ such
that $d_Y(y_1\cdot t,y_2\cdot t)>\delta$ for every $t\in
\mathbb{R}$, the pair $y_1,y_2$ is called {\it proximal} if it not distal. We
say that the flow $(Y,\sigma)$ is distal when, each pair
$y_1,y_2$ of different elements of $Y$ is distal.

Hereafter, we always assume that $Y$ is minimal and distal.

 Let $X,Y$ be metric spaces and $(Y,\sigma)$ be a compact flow (called the base flow). Let also
 $\RR^+=\{t\in \RR:t\ge 0\}$. A
 skew-product semiflow $\Pi^t:X\times Y\rightarrow X\times Y$ is a semiflow of the following form
 \begin{equation}\label{skew-product-semiflow}
 \Pi^{t}(u,y)=(\varphi(t,u,y),y\cdot t),\quad t\geq 0,\, (u,y)\in X\times Y,
 \end{equation}
satisfying (i) $\Pi^{0}={\rm Id}_X$ and (ii) the co-cycle property:
$\varphi(t+s,u,y)=\varphi(s,\varphi(t,u,y),y\cdot t)$ for each $(u,y)\in X\times Y$ and $s,t\in \RR^+$.
 A subset $A\subset
X\times Y$ is {\it positively invariant} if $\Pi^t(A)\subset A$ for
all $t\in \RR^+$. The {\it forward orbit} of any $(u,y)\in X\times
Y$ is defined by $\mathcal{O}^+(u,y)=\{\Pi^t(u,y):t\ge 0\}$, and the
{\it $\omega$-limit set} of $(u,y)$ is defined by
$\omega(u,y)=\{(\hat{u},\hat{y})\in X\times Y:\Pi^{t_n}(u,y)\to
(\hat{u},\hat{y}) (n\to \infty) \textnormal{ for some sequence
}t_n\to \infty\}$.

A {\it flow extension} of a skew-product semiflow $\Pi^t$
 is a continuous skew-product flow $\hat{\Pi}^t$ such that $\hat{\Pi}^t(u,y)=\Pi^t(u,y)$ for each
$(u,y)\in X\times Y$ and $t\in \RR^+$. A compact positively invariant subset is said to admit {\it a flow extension} if the
semiflow restricted to it does. Actually, a compact positively invariant set $K\subset X\times Y$ admits a flow extension if every
point in $K$ admits a unique backward orbit which remains inside the
set $K$ (see \cite[part II]{Shen1998}). In particular, the $\omega$-limit set $\omega(u,g)$ defined above admits a flow extension (see, e.g. \cite{hale1988asymptotic,Shen1998}). A compact invariant set $K\subset X\times Y$ for $\Pi^t$ ($\Pi^{t}(K)\subset K$ for all $t\in\mathbb{R}$) is called {\it minimal} if it
does not contain any other nonempty compact invariant set
than itself.

Let $K\subset X\times Y$ be a positively invariant set for $\Pi^t$ which admits a flow extension. For each $(u,y)\in K$ one can define a backward orbit of $(u,y)$ as $\mathcal{O}^-(u,y)=\{\Pi^t(u,y):t\le 0\}$ and the $\alpha$-limit set of $(u,y)$ is defined as $\alpha(u,y)=\{(\hat{u},\hat{y})\in X\times Y:\Pi^{t_n}(u,y)\to
(\hat{u},\hat{y}) (n\to \infty) \textnormal{ for some sequence
}t_n\to -\infty\}$.
Let also $p:X\times Y\to Y$ be the natural projection. Then $p$ is a flow
homomorphism for the flows $(K,\RR)$ and $(Y,\sigma)$.  Moreover, $K\subset X\times Y$ is called an
{\it almost $1$-cover} ({\it $1$-cover}) of $Y$ if card$(p^{-1}(y)\cap K)=1$
 for at least one $y\in Y$ (for any $y\in Y$).

\subsection{Almost-periodic and almost-automorphic functions }

Let $D$ be a subset of $\RR^m$. We list the following definitions and notations in this subsection.

\begin{definition}\label{admissible}
{\rm A continuous function $f:\RR\times D\to\RR, (t,w)\mapsto f(t,w),$ is said to be {\it
admissible} if $f(t,w)$ is bounded and uniformly continuous on
$\RR\times K$ for any compact subset $K\subset D$. $f$ is $C^r$ ($r\ge 1$) {\it admissible} if $f$ is $C^r$ in $w\in D$ and Lipschitz in $t$, and $f$ as well as its partial derivatives to order $r$ are admissible.}
\end{definition}

Let $f\in C(\RR\times D,\RR) (D\subset \RR^m)$ be admissible. Then
$H(f)={\rm cl}\{f\cdot\tau:\tau\in \RR\}$ is called the {\it hull of
$f$}, where $f\cdot\tau(t,\cdot)=f(t+\tau,\cdot)$ and the closure is
taken under the compact open topology. Moreover, $H(f)$ is compact
and metrizable under the compact open topology (see \cite{Sell,Shen1998}).
The time translation $g\cdot t$ of $g\in H(f)$ induces a natural
flow on $H(f)$ (cf. \cite{Sell}). Moreover, if $f$ is $C^r$-admissible then, $g$ is also $C^r$-admissible for any $g\in H(f)$ (see \cite{Shen1998}).
\begin{definition}{\rm
A function $f\in C(\RR,\RR)$ is {\it almost periodic} if, for any
$\varepsilon>0$, the set
$T(\varepsilon):=\{\tau:\abs{f(t+\tau)-f(t)}<\varepsilon,\,\forall
t\in \RR\}$ is relatively dense in $\RR$. $f$ is {\it almost automorphic} if for every $\{t'_k\}\subset\mathbb{R}$ there is a subsequence $\{t_k\}$
and a function $g:\mathbb{R}\to \mathbb{R}$ such that $f(t+t_k)\to g(t)$ and $g(t-t_k)\to f(t)$ point wise. A function $f\in C(\RR\times D,\RR)(D\subset \RR^m)$ is {\it uniformly almost periodic (automorphic)} {\it in $t$}, if $f$ is both admissible and, for each fixed $d\in D$, $f(t,d)$ is almost periodic (automorphic) with respect to $t\in \RR$.}

\end{definition}

\begin{remark}\label{a-p-to-minial}
{\rm  If $f$ is a uniformly almost automorphic function in $t$, then $H(f)$ is always {\it minimal}, and there is a residual set $Y'\subset H(f)$, such that all $g\in Y'$ is an almost automorphic function in $t$. If $f$ is a uniformly almost perioidc function in $t$, then $H(f)$ is always {\it minimal and distal}, and every $g\in H(f)$ is uniformly almost periodic function (see, e.g. \cite{Shen1998}).}
\end{remark}

\subsection{Zero number function}

Given a $C^{1}$-smooth function $u:S^{1}\rightarrow \mathbb{R}^{1}$, the zero number of $u$ is
 defined as
$$z(u(\cdot))={\rm card}\{x\in S^{1}|u(x)=0\}.$$
The following lemma  was originally presented in \cite{2038390} and \cite{H.MATANO:1982} and was improved in \cite{Chen98}.
\begin{lemma}\label{zero-number}
Consider the linear system
\begin{equation}\label{linear-equation}
\begin{cases}
\varphi_{t}=a(t,x)\varphi_{xx}+b(t,x) \varphi_{x}+c(t,x)\varphi,\quad x\in S^1,\\
\varphi_{0}=\varphi(0,\cdot)\in H^{1}(S^{1}),
\end{cases}
\end{equation}
where $a,a_t,a_x,b$ and $c$ are bounded continuous functions, $a\ge \delta >0$. Let $\varphi(t,x)$ be a classical nontrivial solution of \eqref{linear-equation}.
Then the following properties holds.\par
{\rm (a)} $z(\varphi(t,\cdot))<\infty,\forall t>0$ and is non-increasing in t.\par
{\rm (b)}  $z(\varphi(t,\cdot))$ can drop only at $t_{0}$ such that $\varphi(t_{0},\cdot)$ has a
multiple zero in $S^{1}$.\par
{\rm (c)}  $z(\varphi(t,\cdot))$ can drop only finite many times,and there exists a $T>0$
such that $\varphi(t,\cdot)$ has only simple zeros in $S^{1}$ as $t\geq T$(hence
$z(\varphi(t,\cdot))=constant$ as $t\geq T$).
\end{lemma}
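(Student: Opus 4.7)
The plan is to reduce the entire lemma to a local study of zeros of $\varphi(t,\cdot)$ combined with parabolic regularization, in the spirit of the original analysis of Angenent and Matano. I would carry this out in three stages.

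First, I would establish finiteness of the zero set for each $t > 0$. Since $a \geq \delta > 0$ and $a, a_t, a_x, b, c$ are bounded and continuous, standard parabolic Schauder-type bootstrapping applied to \eqref{linear-equation} upgrades the initial datum in $H^1(S^1)$ to a classical solution that is real analytic in $x$ for every $t > 0$ (via the Kato/Friedman theory of spatial analyticity for uniformly parabolic equations with smooth coefficients). A non-identically-zero analytic function on the compact circle $S^1$ has only isolated, hence finitely many zeros, which gives $z(\varphi(t,\cdot)) < \infty$.

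Second, I would analyze the local motion of zeros. At a simple zero $(t_0, x_0)$, i.e. $\varphi(t_0, x_0) = 0$ and $\varphi_x(t_0, x_0) \neq 0$, the implicit function theorem produces a unique $C^1$ curve $x = \xi(t)$ of zeros through $(t_0, x_0)$, so $z$ is locally constant near $t_0$. At a multiple zero, i.e. $\p_x^j \varphi(t_0, x_0) = 0$ for $0 \leq j < k$ with $\p_x^k \varphi(t_0, x_0) \neq 0$ for some $k \geq 2$, I would introduce the parabolic rescaling $(t-t_0, x-x_0) \mapsto (\e^2 \tau, \e \xi)$ and expand $\varphi$ in a formal Taylor-type series in $(\tau,\xi)$. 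Using the PDE to express time derivatives in terms of spatial ones (so that $\p_t \varphi(t_0,x_0)= a(t_0,x_0)\p_x^2 \varphi(t_0,x_0)$, etc.), the leading profile becomes a polynomial in $\xi$ of degree $k$ whose nearby zero set consists of exactly $k$ simple real zeros for $t$ slightly less than $t_0$ and strictly fewer than $k$ simple zeros for $t$ slightly greater than $t_0$. Passing through the multiple zero therefore forces $z$ to drop strictly.

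Third, I would assemble this into the global statement. Covering $S^1$ by neighborhoods of each zero of $\varphi(t_0,\cdot)$ and using the local pictures shows that $t \mapsto z(\varphi(t,\cdot))$ is right-continuous and non-increasing, with a strict drop exactly at the times when $\varphi(t,\cdot)$ has a multiple zero. This proves (a) and (b). Since $z$ takes values in $\NN$ and is non-increasing, it can drop only finitely many times, so there exists $T > 0$ after which $z$ is constant; at such $t \geq T$, $\varphi(t,\cdot)$ must have only simple zeros, which yields (c).

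The main obstacle is Stage 2: rigorously showing that a multiple zero is \emph{destroyed} and that no extra zeros are spontaneously created. Creation is ruled out by continuity of $\varphi$ together with unique continuation for linear parabolic equations (so that $\varphi$ cannot vanish on an open set without vanishing identically). The strict drop at a multiple zero is the technical heart of the argument and, in my implementation, would be cleanest via a local conjugation/Liouville-type change of unknown reducing \eqref{linear-equation} near $(t_0, x_0)$ to an equation close to the heat equation, for which explicit Gaussian-type expansions make the polynomial zero-set behavior transparent.
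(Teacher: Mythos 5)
The paper does not prove this lemma at all: it is quoted as a known result, with the proof delegated to Angenent \cite{2038390}, Matano \cite{H.MATANO:1982} and, crucially for the low regularity assumed here, Chen \cite{Chen98}. So your proposal has to be judged against that literature, and there it has a genuine gap. Your Stage 1 rests on spatial analyticity of $\varphi(t,\cdot)$ for $t>0$, but the hypotheses only give $a,a_t,a_x,b,c$ bounded and \emph{continuous}; analytic (or even $C^\infty$) hypoellipticity requires analytic (resp.\ smooth) coefficients, and here they are neither. This matters for the application in the paper: the coefficients of \eqref{differen-equa} are built from integrals of first derivatives of a $C^2$-admissible $g$ along two solutions, so they are merely continuous. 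Without analyticity, the finiteness of $z(\varphi(t,\cdot))$ cannot be read off from isolation of zeros of an analytic function. What is actually needed is that $\varphi(t_0,\cdot)$ cannot vanish to infinite order at a point unless $\varphi\equiv 0$, i.e.\ a \emph{strong} unique continuation theorem at a point for parabolic equations with rough coefficients --- precisely the content of Chen's improvement that the paper cites. You invoke only weak unique continuation (non-vanishing on an open set), which does not rule out an isolated zero of infinite order and hence does not give finiteness or the finite-order normal form your Stage 2 depends on.

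Stage 2 has the same problem one level up: the Taylor-type expansion to order $k$ at a multiple zero, and the reduction of the local zero set to that of a degree-$k$ caloric polynomial, require $k$ spatial derivatives of $\varphi$ with controlled remainders. A classical solution of an equation with continuous coefficients is $C^2$ in $x$ but in general no better, so for $k\ge 3$ the expansion is not available. Angenent's actual argument circumvents this (Liouville reduction to a perturbed heat equation plus a Sturm-type comparison / approximation argument rather than a literal Taylor expansion), and Chen's paper is what extends the finite-order statement to coefficients of the regularity assumed in the lemma. Your overall architecture --- finiteness, local analysis at simple and multiple zeros, strict drop at multiple zeros, then monotonicity and eventual simplicity of zeros --- is the correct skeleton and does yield (a)--(c) once the local facts are secured; but as written the proof only works under analyticity (or at least high smoothness) of the coefficients, which is strictly stronger than the hypotheses of the lemma and than what the paper needs.
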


\begin{lemma}\label{difference-lapnumber}
For any $g\in H(f)$, Let $\varphi(t,\cdot;u,g)$ and $\varphi(t,\cdot;\hat{u},g)$ be two
distinct solutions of {\rm (\ref{equation-lim1})} on
$ \mathbb{R}^+$. Then

\begin{description} \item[{\rm (a)}]
$z(\varphi(t,\cdot;u,g)-\varphi(t,\cdot;\hat{u},g))<\infty$ for $t>0$ and is non-increasing in t;

\item[{\rm (b)}] $z(\varphi(t,\cdot;u,g))-\varphi(t,\cdot;\hat{u},g)))$
strictly decreases at $t_0$ such that the function $\varphi(t_0,\cdot;u,g))-\varphi(t_0,\cdot;\hat{u},g)$ has a multiple
zero in $S^1$;

\item[{\rm (c)}] $z(\varphi(t,\cdot;u,g))-\varphi(t,\cdot;\hat{u},g)))$ can drop only finite many times, and there exists a $T>0$ such that  $$z(\varphi(t,\cdot;u,g))-\varphi(t,\cdot;\hat{u},g)))\equiv
\textnormal{constant}$$ for all $t\ge T$.
\end{description}
\end{lemma}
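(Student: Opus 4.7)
The plan is to reduce Lemma \ref{difference-lapnumber} to Lemma \ref{zero-number} by showing that the difference of two solutions of the nonlinear equation \eqref{equation-lim1} satisfies a linear parabolic equation of the form \eqref{linear-equation}. Once this is achieved, conclusions (a), (b), (c) follow immediately by applying Lemma \ref{zero-number} to the difference.

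More precisely, I would first set $w(t,x) := \varphi(t,x;u,g) - \varphi(t,x;\hat u,g)$ and subtract the two copies of \eqref{equation-lim1}. Writing
\begin{equation*}
g(t,\varphi,\varphi_x) - g(t,\hat\varphi,\hat\varphi_x) = \int_0^1 \frac{d}{ds}\,g\bigl(t,\,s\varphi+(1-s)\hat\varphi,\,s\varphi_x+(1-s)\hat\varphi_x\bigr)\,ds,
\end{equation*}
one obtains $w_t = w_{xx} + b(t,x)\,w_x + c(t,x)\,w$, where
\begin{equation*}
b(t,x) = \int_0^1 g_p\bigl(t,\,s\varphi+(1-s)\hat\varphi,\,s\varphi_x+(1-s)\hat\varphi_x\bigr)\,ds,
\end{equation*}
and $c(t,x)$ is defined analogously with $g_u$ in place of $g_p$. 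Thus the equation fits the template of \eqref{linear-equation} with $a\equiv 1$ (so $a\ge 1>0$ and $a_t=a_x=0$).

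Next I would verify that $b$ and $c$ satisfy the hypotheses of Lemma \ref{zero-number}, i.e.\ that they are bounded and continuous. Since both $\varphi(\cdot,\cdot;u,g)$ and $\varphi(\cdot,\cdot;\hat u,g)$ are classical solutions on $\mathbb{R}^+$, standard parabolic regularity together with the embedding $X\hookrightarrow C^1(S^1)$ implies that for every closed interval $[\delta,T]\subset(0,\infty)$ the quantities $\varphi,\hat\varphi,\varphi_x,\hat\varphi_x$ take values in some compact subset of $\mathbb{R}$. Admissibility of $g$ (which inherits $C^2$-admissibility from $f$) then gives uniform continuity and boundedness of $g_u$ and $g_p$ on this relevant compact set, and hence of $b$ and $c$ on $[\delta,T]\times S^1$. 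Since the distinctness assumption and parabolic uniqueness force $w(t,\cdot)\not\equiv 0$ for all $t\ge 0$, $w$ is a nontrivial classical solution of the linear equation on each such $[\delta,T]$.

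At this point conclusions (a)–(c) follow by direct application of Lemma \ref{zero-number} on each $[\delta,T]$ and then letting $\delta\to 0^+$ and $T\to\infty$: (a) is the finiteness and monotonicity assertion, (b) transcribes the drop-only-at-multiple-zeros statement to $w = \varphi(t,\cdot;u,g)-\varphi(t,\cdot;\hat u,g)$, and (c) is the eventual constancy. The only subtle point — and the main obstacle — is the bookkeeping required to make the coefficients $b,c$ well-defined and satisfy the standing hypotheses of Lemma \ref{zero-number} on $[0,\infty)$ rather than on a bounded interval; this is handled, as indicated above, by working on $[\delta,T]$ and passing to the limit, using that finiteness of $z(w(t,\cdot))$ for $t>0$ and its non-increasing character are both stable under this limiting procedure.
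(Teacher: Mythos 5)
Your proposal is correct and is exactly the standard reduction the paper relies on: the paper gives no proof of Lemma \ref{difference-lapnumber} beyond citing Lemma 2.3 of \cite{SWZ}, and the linearization device you use (writing $w=\varphi-\hat\varphi$ as a solution of $w_t=w_{xx}+b\,w_x+c\,w$ with $b,c$ given by integral averages of $g_p,g_u$ along the segment joining the two solutions) is the same one the paper itself employs explicitly in the proof of Lemma \ref{symb}, equation \eqref{differen-equa}. Your handling of the two minor technical points --- boundedness of the coefficients via parabolic smoothing on $[\delta,T]$, and deducing (c) from the fact that a non-increasing, integer-valued, finite, non-negative function must stabilize --- is sound.
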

\begin{proof}
See Lemma 2.3 in \cite{SWZ}.
\end{proof}

 \begin{lemma}\label{zero-cons-local}
 Let $u\in X$ be such that $u$ has only simple zeros on $S^1$, then there is $\delta>0$ such that for any $v\in X$ with $\|v\|<\delta$, one has
  \begin{equation*}
    z(u)=z(u+v)
  \end{equation*}
\end{lemma}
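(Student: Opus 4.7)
The plan is to use the embedding $X \hookrightarrow C^1(S^1)$ and a standard local implicit-function-type argument at each simple zero of $u$. First I would note that since $u$ has only simple zeros on the compact manifold $S^1$, the zero set $Z(u)=\{x_1,\dots,x_n\}$ is finite, and at each $x_i$ we have $u(x_i)=0$ with $u'(x_i)\neq 0$. By continuity of $u$ and $u'$ one can choose pairwise disjoint closed arcs $I_i\subset S^1$ containing $x_i$ in their interiors such that $|u'(x)|\geq \alpha$ for some $\alpha>0$ on $I_i$ (so $u$ is strictly monotone on $I_i$, with sign change across $x_i$), while on the compact complement $K:=S^1\setminus\bigcup_i \mathrm{int}(I_i)$ one has $|u(x)|\geq \beta$ for some $\beta>0$.

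Next I would transfer these estimates to small perturbations. Since $X\hookrightarrow C^1(S^1)$, there is a constant $C>0$ with $\norm{w}_{C^1(S^1)}\leq C\norm{w}$ for all $w\in X$. Choose $\delta>0$ so that $C\delta<\min(\alpha/2,\beta/2)$. Then for any $v\in X$ with $\norm{v}<\delta$:

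\textbf{(i)} On $K$, $|u(x)+v(x)|\geq |u(x)|-\norminfty{v}\geq \beta - C\delta > 0$, so $u+v$ has no zeros on $K$.

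\textbf{(ii)} On each $I_i$, $|(u+v)'(x)|\geq |u'(x)|-\norminfty{v'}\geq \alpha-C\delta>0$ with the same sign as $u'$, so $u+v$ is strictly monotone on $I_i$. Moreover, at the two endpoints of $I_i$, $u$ takes values of strictly opposite signs and magnitude at least $\beta$, so $u+v$ inherits the opposite signs at the endpoints. Hence by the intermediate value theorem $u+v$ has exactly one (transverse) zero in $I_i$.

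Combining (i) and (ii) gives $z(u+v)=n=z(u)$. There is really no main obstacle here; the only point to be careful about is that the norm on $X$ controls the $C^1$-norm (so that both the pointwise and derivative perturbations are simultaneously small), which is built into the choice of the fractional power space $X$ made at the beginning of the paper.
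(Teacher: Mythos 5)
Your proof is correct. The paper itself offers no argument for this lemma, only a citation to Corollary 2.1 of Sandstede--Fiedler and Lemma 2.3 of Chen--Matano, and your argument (finiteness of the simple zeros, strict monotonicity on small arcs around each zero, a uniform lower bound for $|u|$ off those arcs, and the $C^1$-control of the $X$-norm) is precisely the standard transversality proof underlying those references.
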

\begin{proof}
  See Corollary 2.1 in \cite{SF1} or Lemma 2.3 in \cite{Chen1989160}.
\end{proof}
The proof of the following lemma can be found in \cite[Lemma 2.4]{SWZ}.
\begin{lemma}\label{sequence-limit}
Fix $g,\ g_{0}\in H(f)$. Let $(u^{i},g)\in p^{-1}(g),(u_{0}^{i},g_{0})\in p^{-1}(g_{0}) (i=1,\ 2,\ u^{1}\neq u^{2},\ u_{0}^{1}\neq u_{0}^{2})$ be such that $\Pi^{t}(u^{i},g)$ is defined on $\mathbb{R}^{+}$ and $\Pi^{t}(u_{0}^{i},g_{0})$ is defined on $\mathbb{R}$. If there exists a sequence $t_{n}\rightarrow +\infty$ ({\rm resp}. $s_{n}\rightarrow -\infty$) as $n\rightarrow \infty$, such that $\Pi^{t_{n}}(u^{i},g)\rightarrow (u_{0}^{i},g_{0})$ ({\rm resp}. $\Pi^{s_{n}}(u^{i},g)\rightarrow (u_{0}^{i},g_{0})$) as $n\rightarrow \infty (i=1,2)$, then
$$z(\varphi(t,\cdot;u_{0}^{1},g_{0})-\varphi(t,\cdot;u_{0}^{2},g_{0}))\equiv \textnormal{constant},$$
for all $t\in \mathbb{R}$.
\end{lemma}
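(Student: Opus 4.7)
The strategy is to compare the zero number of the forward-orbit difference $\Delta(t,\cdot):=\varphi(t,\cdot;u^1,g)-\varphi(t,\cdot;u^2,g)$ with that of the limit-orbit difference $\psi(t,\cdot):=\varphi(t,\cdot;u_0^1,g_0)-\varphi(t,\cdot;u_0^2,g_0)$, exploiting continuous dependence of the semiflow on the initial datum and the base point, together with the local constancy of $z$ at $C^1$-functions having only simple zeros (Lemma \ref{zero-cons-local}).

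Since $u_0^1\neq u_0^2$, $\psi$ is a nontrivial classical solution on $\mathbb{R}\times S^1$ of a linear parabolic equation
\[
\psi_t=\psi_{xx}+b(t,x)\psi_x+c(t,x)\psi
\]
with bounded continuous coefficients $b,c$, obtained by expanding $g_0(t,\varphi_1,\partial_x\varphi_1)-g_0(t,\varphi_2,\partial_x\varphi_2)$ via the mean value theorem along the two bounded limit orbits. Hence Lemma \ref{zero-number} applies on every interval $[t_0,\infty)$: $t\mapsto z(\psi(t,\cdot))$ is finite, non-increasing, and drops only at multiple-zero times, which are finite in each such interval. Consequently the set $\mathcal{S}:=\{t\in\mathbb{R}:\psi(t,\cdot)\text{ has only simple zeros on }S^1\}$ is dense in $\mathbb{R}$. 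In parallel, Lemma \ref{difference-lapnumber} provides a constant $N':=\lim_{t\to +\infty}z(\Delta(t,\cdot))$ and a time $T$ past which $z(\Delta(t,\cdot))\equiv N'$.

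The key step is to show $z(\psi(t,\cdot))=N'$ for every $t\in\mathcal{S}$. By the cocycle property and continuous dependence on the initial datum and the hull point,
\[
\varphi(t_n+t,\cdot;u^i,g)=\varphi(t,\cdot;\varphi(t_n,\cdot;u^i,g),g\cdot t_n)\longrightarrow \varphi(t,\cdot;u_0^i,g_0)\quad\text{in }X\hookrightarrow C^1(S^1),
\]
so $\Delta(t_n+t,\cdot)\to\psi(t,\cdot)$ in $C^1(S^1)$. Since $\psi(t,\cdot)$ has only simple zeros, Lemma \ref{zero-cons-local} yields $z(\Delta(t_n+t,\cdot))=z(\psi(t,\cdot))$ for all $n$ sufficiently large; and for $n$ large enough that $t_n+t\geq T$ one also has $z(\Delta(t_n+t,\cdot))=N'$, whence $z(\psi(t,\cdot))=N'$. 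For an arbitrary $t\in\mathbb{R}$, density of $\mathcal{S}$ allows one to pick $t''<t<t'$ in $\mathcal{S}$; monotonicity then sandwiches $N'=z(\psi(t'',\cdot))\geq z(\psi(t,\cdot))\geq z(\psi(t',\cdot))=N'$, so $z(\psi(t,\cdot))\equiv N'$ on $\mathbb{R}$. The backward case ($s_n\to -\infty$) is handled identically, the flow extension on $\omega(u_0,g_0)$ ensuring that $\Pi^{s_n+t}(u^i,g)$ is defined and converges to $\Pi^{t}(u_0^i,g_0)$ for each fixed $t\in\mathbb{R}$.

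The main obstacle I anticipate is confirming the density of $\mathcal{S}$ on all of $\mathbb{R}$, not merely on one-sided intervals. This dissolves once one observes that Lemma \ref{zero-number}(c) may be invoked from any starting time $t_0$ (which can be pushed arbitrarily far into the past), making $\mathcal{S}$ cofinite in every $[t_0,\infty)$ and hence dense in $\mathbb{R}$. The rest is routine: $C^1$-convergence of $\Delta(t_n+t,\cdot)$ to $\psi(t,\cdot)$ via the $X\hookrightarrow C^1(S^1)$ embedding and continuous dependence of $\varphi$, combined with the simple-zero rigidity of $z$ from Lemma \ref{zero-cons-local}, yields the conclusion.
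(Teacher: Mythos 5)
The paper gives no proof of this lemma at all---it simply cites \cite[Lemma 2.4]{SWZ}---so there is nothing internal to compare against; your argument is the standard one for such statements (shift the forward orbit by the times $t_n$, use local constancy of $z$ at simple-zero configurations, and sandwich with monotonicity), and its overall architecture is sound. Two points, however, need repair before the proof is complete.

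The substantive gap is the convergence $\Delta(t_n+t,\cdot)\to\psi(t,\cdot)$ for \emph{negative} $t$. You justify it by writing $\varphi(t_n+t,\cdot;u^i,g)=\varphi(t,\cdot;\varphi(t_n,\cdot;u^i,g),g\cdot t_n)$ and invoking ``continuous dependence,'' but for $t<0$ the right-hand side is the backward solution operator applied at a point that is not known to admit a (unique, continuously depending) backward orbit, so this identity is not available. Without this step your argument only yields $z(\psi(t,\cdot))\equiv N'$ on $[0,\infty)$, whereas the lemma (and its later uses, e.g.\ in Lemma \ref{constant-on-twosets}) needs constancy on all of $\mathbb{R}$. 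The correct route is: for fixed $\tau<0$ the set $\{\varphi(t_n+\tau,\cdot;u^i,g)\}_n$ is relatively compact in $X$ (boundedness of the forward orbit plus parabolic smoothing, as recalled in the introduction); for any convergent subsequence $\varphi(t_{n_k}+\tau,\cdot;u^i,g)\to w^i$ one passes to the limit in the \emph{forward} identity $\varphi(t_{n_k},\cdot;u^i,g)=\varphi(-\tau,\cdot;\varphi(t_{n_k}+\tau,\cdot;u^i,g),g\cdot(t_{n_k}+\tau))$ to get $u_0^i=\varphi(-\tau,\cdot;w^i,g_0\cdot\tau)$, and backward uniqueness for parabolic equations then forces $w^i=\varphi(\tau,\cdot;u_0^i,g_0)$; since every subsequential limit agrees, the whole sequence converges. (The same device is what makes your treatment of the $s_n\to-\infty$ case legitimate; note also that in that case the hypothesis implicitly requires $(u^i,g)$ itself to admit a backward orbit, which is not what ``the flow extension on $\omega(u_0,g_0)$'' provides.)

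A minor citation issue: to get density of $\mathcal{S}$ in all of $\mathbb{R}$ you need the set of multiple-zero times of $\psi$ to be locally finite, and Lemma \ref{zero-number}(b)--(c) alone only say that $z$ \emph{can} drop only at multiple zeros and only finitely often, plus simplicity of zeros on some ray $[T,\infty)$; this does not exclude non-dropping multiple-zero times accumulating somewhere in the past. What you actually need is the strict-decrease statement of Lemma \ref{difference-lapnumber}(b) (which applies verbatim, since $\psi$ is the difference of two solutions of \eqref{equation-lim1}): it identifies multiple-zero times with drop times, and these are locally finite because $z$ is finite, integer-valued and non-increasing. With these two repairs the proof is correct.
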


\subsection{Invariant subspaces for linear parabolic equations on $S^1$}

Consider the following linear parabolic equation:
\begin{equation}\label{linear-equation2}
\psi_t=\psi_{xx}+a(x,\omega\cdot t)\psi_x+b(x,\omega\cdot t)\psi,\,\,t>0,\,x\in S^{1}=\mathbb{R}/2\pi \mathbb{Z},
\end{equation}
where $\omega\in \Omega$, $\omega\cdot t$ is a flow on a compact  metric space $\Omega$,  $a,b: S^1\times \Omega\rightarrow \mathbb{R}^1$ are continuous, and $a^\omega(t,x)=a(x,\omega\cdot t)$, $b^\omega(t,x)=b(x,\omega\cdot t)$ are H\"older continuous in $t$.

Throughout this subsection, $X$ is the fractional power space associated with the operator $u\rightarrow
-u_{xx}:H^{2}(S^{1})\rightarrow L^{2}(S^{1})$ such that the embedding
relation $X\hookrightarrow C^{1}(S^{1})$ is satisfied.

Now we recall the concept of exponential dichotomy (ED) and Sacker-Sell spectrum.\par
Let $\Psi(t,\omega):X\rightarrow X$ be the evolution operator generated by \eqref{linear-equation2}, that is, the evolution operator of the following equation:
\begin{equation}\label{linear-opera}
  v'=A(\omega\cdot t)v,\quad t>0,\,\omega\in \Omega,\, v\in X,
\end{equation}
where $A(\omega)v=v_{xx}+a(x,\omega)v_x+b(x,\omega)v$, and $\omega\cdot t$ is as in \eqref{linear-equation2}.

Define $\bar\Pi^t: X\times\Omega\rightarrow X\times\Omega$ by
\begin{equation}\label{exponential-dicho}
  \bar\Pi^t(v,\omega)=(\Psi(t,\omega)v,\omega\cdot t),
\end{equation}
Clearly, $\bar\Pi^t$ is a linear skew-product semiflow on $X\times\Omega$. We say $\Psi$ admits an {\it exponential dichotomy over} $\Omega$ if there exist $K>0$, $\alpha>0$ and continuous projections $P(\omega):X\rightarrow X$ such that for all $\omega \in \Omega$, $\Psi(t,\omega)|_{R(P(\omega))}:R(P(\omega))\rightarrow R(P(\omega\cdot t))$ is an isomorphism satisfying $\Psi(t,\omega)P(\omega)=P(\omega \cdot t)\Psi(t,\omega)$, $t\in \mathbb{R}^+$ (hence $\Psi(-t,\omega):=\Psi^{-1}(t,\omega\cdot-t):R(P(\omega))\to R(P(\omega\cdot-t))$ is well defined for $t\in\mathbb{R}^+$); moreover,
\begin{equation}\label{expone-dichot}
\begin{split}
\|\Psi(t,\omega)(I-P(\omega))\|\leq Ke^{-\alpha t},\quad t\geq 0,\\
\|\Psi(t,\omega)P(\omega)\|\leq Ke^{\alpha t},\quad t\leq 0.
\end{split}
\end{equation}
Here $R(P(\omega))$ is the range of $P(\omega)$. Let $\lambda\in \mathbb{R}$ and define $\bar\Pi^t_{\lambda}: X\times\Omega\rightarrow X\times\Omega$ by
\begin{equation*}
  \bar\Pi^t_{\lambda}(v,\omega)=(\Psi_{\lambda}(t,\omega)v,\omega\cdot t),
\end{equation*}
where $\Psi_{\lambda}(t,\omega)=e^{-\lambda t}\Psi(t,\omega)$. So, $\bar\Pi^t_{\lambda}$ is also a linear skew-product semiflow on
$X\times \Omega$. We call
$$
\sigma(\Omega)=\{\lambda\in \mathbb{R}:\bar\Pi^t_{\lambda}\ \text{has no exponential dichotomy over }\Omega\}
$$
the {\it Sacker-Sell spectrum} of \eqref{linear-equation2} or \eqref{linear-opera}. If $\Omega$ is  {\it compact and connected}, then the Sacker-Sell spectrum $\sigma(\Omega)=\cup_{k=0}^\infty I_k$, where $I_k=[a_k,b_k]$ and $\{I_k\}$ is ordered from right to left, that is, $\cdots<a_k\leq b_k<a_{k-1}\leq b_{k-1}<\cdots<a_0\leq b_0$ (see \cite{Chow1994,Sacker1978,Sacker1991}).\par
Given any given $0\leq n_1\leq n_2\leq\infty$. When $n_2\neq \infty$, let
\begin{equation}\label{twoside-estimate}
\begin{split}
V^{n_1,n_2}(\omega)=\{v\in X:&\|\Psi(t,\omega)v\|=o(e^{a^-t})\ \text{as}\ t\rightarrow -\infty\\ & \|\Psi(t,\omega)v\|=o(e^{b^+t})\ \text{as}\ t\rightarrow \infty\}
\end{split}
\end{equation}
where $a^-$, $b^+$ are such that $b_{n_2+1}<a^-<a_{n_2}\leq b_{n_1}<b^+<a_{n_1-1}$. Here $a_{n_1-1}=\infty$ if $n_1=0$. For any $a^-\in (b_{n_2+1},a_{n_2})$ (resp. $b^+\in (b_{n_1},a_{n_1-1})$), $\Psi_{a^-}(t,\omega)$ (resp. $\Psi_{b^-}(t,\omega)$) admits an exponential dichotomy over $\Omega$. Let $P_{a^-}(\omega)$ (resp. $P_{b^+}(\omega)$) be the associated continuous projection, which is in fact independent of $a^-\in (b_{n_2+1},a_{n_2})$ (resp. $b^+\in (b_{n_1},a_{n_1-1})$). Then $V^{n_1,n_2}(\omega)=R(I-P_{b^+}(\omega))\cap R(P_{a^-}(\omega))$. Recall that $\Psi(t,\omega)|_{R(P_{a^-}(\omega))}$ is an isomorphism, it then yields that $\Psi(t,\omega)v$ is well defined for $v\in V^{n_1,n_2}(\omega)$ and $t<0$. When $n_1<n_2=\infty$, let
\[
V^{n_1,\infty}(\omega)=\{v\in X :\|\Psi(t,\omega)v\|=o(e^{b^+t})\text{as }t\to\infty\}
\]
where $b^+$ is such that $b_{n_1}<b^+<\lambda$ for any $\lambda\in \cup_{k=0}^{n_1-1}I_k$.
$V^{n_1,n_2}(\omega)$ is called the {\it invariant subspace} of \eqref{linear-equation2} or \eqref{linear-opera} associated with the spectrum set $\cup_{k=n_1}^{n_2}I_k$ at $\omega\in \Omega$.\par

Suppose that $0\in \sigma(\Omega)$ and $n_0$ is such that $0\in I_{n_0}\subset\sigma(\Omega)$. Then $V^s(\omega)=V^{n_0+1,\infty}(\omega)$, $V^{cs}(\omega)=V^{n_0,\infty}(\omega)$, $V^{c}(\omega)=V^{n_0,n_0}(\omega)$, $V^{cu}(\omega)=V^{0,n_0}(\omega)$, and $V^u(\omega)=V^{0,n_0-1}(\omega)$ are referred to as {\it stable, center stable, center, center unstable}, and {\it unstable subspaces} of \eqref{linear-equation2} at $\omega\in \Omega$, respectively.

Suppose that $0\not\in \sigma(\Omega)$ and $n_0$ is such that $I_{n_0}\subset (0,\infty)$ and $I_{n_0+1}\subset (-\infty,0)$.
$V^s(\omega)=V^{n_0+1,\infty}(\omega)$ and $V^u(\omega)=V^{0,n_0}(\omega)$ are referred to as {\it stable} and {\it unstable subspaces} of \eqref{linear-equation2} at $\omega\in \Omega$, respectively.

\subsection{Invariant manifolds  of nonlinear parabolic equations on $S^1$}\label{invari-mani0}

Consider
\begin{equation}\label{lipschitz}
v'=A(\omega \cdot t)v+F(v,\omega \cdot t),\quad t>0,\ \omega\in \Omega,\ v\in X
\end{equation}
where $\omega\cdot t$ and $A(\omega\cdot t)$ are as in \eqref{linear-opera}, $F(\cdot,\omega)\in C^1(X,X_0)$, $F(v,\cdot)\in C^0(\Omega,X_0)$ ($v\in X$), $F^\omega(t,v)=F(v,\omega\cdot t)$ is H\"older continuous in $t$, and $F(v,\omega)=o(\|v\|)$ ($X_0=L^2(S^1)$), here  $X$ is as in the previous subsection. It is well-known that the solution operator $\Phi_t(\cdot,\omega)$ of \eqref{lipschitz} exists in the usual sense, that is, for any $v\in X$, $\Phi_0(v,\omega)=v$, $\Phi_t(v,\omega)\in \mathcal{D}(A(\omega\cdot t))$; $\Phi_t(v,\omega)$ is differentiable in $t$ with respect to $X_0$ norm and satisfies \eqref{lipschitz} for $t>0$.\par

Suppose that $\sigma(\Omega)=\cup_{k=0}^{\infty}I_k$ is the spectrum of \eqref{linear-opera}. The following lemma can be proved by using arguments as in \cite{P.Bates,Chow1991, Chow1994-2, Hen}.
\begin{lemma}\label{invari-mani}
 There is a $\delta_0>0$ such that for any $0<\delta^*<\delta_0$ and $0\leq n_1\leq n_2\leq\infty$, ($n_1\not = n_2$ when $n_2=\infty$), \eqref{lipschitz} admits for each $\omega\in \Omega$ a local invariant manifold $W^{n_1,n_2}(\omega,\delta^*)$ with the following properties.
 \begin{itemize}
 \item [\rm{(i)}] There are $K_0>0$, and a bounded continuous function $h^{n_1,n_2}(\omega): V^{n_1,n_2}(\omega)$ $\rightarrow V^{n_2+1,\infty}(\omega)\oplus V^{0,n_1-1}(\omega)) $ being $C^1$ for each fixed $\omega\in \Omega$, and $h^{n_1,n_2}(v,\omega)$ $=o(\|v\|)$, $\|(\partial h^{n_1,n_2}/\partial v)(v,\omega)\|\leq K_0$ for all $\omega\in \Omega$, $v\in V^{n_1,n_2}(\omega)$ such that
 \begin{eqnarray*}
  W^{n_1,n_2}(\omega,\delta^*)=\left\{ v_0^{n_1,n_2}+h^{n_1,n_2}(v_0^{n_1,n_2},\omega):v_0^{n_1,n_2}\in V^{n_1,n_2}(\omega) \cap \{v\in X: \|v\|<\delta^*\}\right\}.
 \end{eqnarray*}
 Moreover, $W^{n_1,n_2}(\omega,\delta^*)$ are diffeomorphic to $V^{n_1,n_2}(\omega)\cap\{v\in X| \|v\|<\delta^*\}$, and $W^{n_1,n_2}(\omega,\delta^*)$ are tangent to $V^{n_1,n_2}(\omega)$ at $0\in X$ for each $\omega\in \Omega$.
 \item [\rm{(ii)}] $W^{n_1,n_2}(\omega,\delta^*)$ is locally invariant in the sense that if $v\in W^{n_1,n_2}(\omega,\delta^*)$ and $\norm{\Phi_t(v,\omega)}<\delta^*$ for all $t\in [0,T]$, then $\Phi_t(v,\omega)\in W^{n_1,n_2}(\omega\cdot t,\delta^*)$ for all $t\in [0,T]$. Therefore,
 for any $v\in W^{n_1,n_2}(\omega,\delta^*)$, there is a $\tau>0$ such that $\Phi_t(v,\omega)\in W^{n_1,n_2}(\omega\cdot t,\delta^*)$ for any $t\in \mathbb{R}$ with $0<t<\tau$.
 \end{itemize}
\end{lemma}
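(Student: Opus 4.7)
The plan is to apply the Lyapunov--Perron method, adapted to skew-product semiflows over the compact base $\Omega$, in the style of Henry, Chow--Lu, and Bates--Lu. First I would use the exponential dichotomies associated with the spectral sets $\bigcup_{k=0}^{n_1-1} I_k$ and $\bigcup_{k=n_2+1}^{\infty} I_k$ to produce the continuous splitting
\begin{equation*}
X = V^{0,n_1-1}(\omega)\oplus V^{n_1,n_2}(\omega)\oplus V^{n_2+1,\infty}(\omega),
\end{equation*}
and fix exponents $a^{-}<0<b^{+}$ lying just above $b_{n_2+1}$ and just below $a_{n_1-1}$ inside the spectral gaps, so that $\Psi(t,\omega)$ decays like $e^{a^{-}t}$ on the unstable piece for $t\le 0$ and like $e^{-b^{+}t}$ on the stable piece for $t\ge 0$ with uniform constants. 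The manifold will be sought as the graph of a map $h^{n_1,n_2}(\,\cdot\,,\omega): V^{n_1,n_2}(\omega)\to V^{0,n_1-1}(\omega)\oplus V^{n_2+1,\infty}(\omega)$.

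To localize, I would truncate the nonlinearity with a smooth radial cut-off: take $\chi\in C^{\infty}([0,\infty),[0,1])$ with $\chi\equiv 1$ on $[0,1/2]$, $\chi\equiv 0$ on $[1,\infty)$, and set $\tilde F(v,\omega)=\chi(\|v\|/\delta^{*})F(v,\omega)$ for $\delta^{*}\in(0,\delta_0)$. Since $F(v,\omega)=o(\|v\|)$ and $F(\cdot,\omega)\in C^{1}$, the Lipschitz constant of $\tilde F$ tends to $0$ as $\delta^{*}\to 0$, while $K$, $a^{-}$, $b^{+}$ stay fixed. For each $v_{0}\in V^{n_1,n_2}(\omega)$, I then set up the Lyapunov--Perron operator on the exponentially weighted space
\begin{equation*}
\mathcal{B}_{\omega} = \bigl\{u\in C(\mathbb{R},X):\ \|u\|_{\gamma}:=\sup_{t\in\mathbb{R}}e^{-\gamma|t|}\|u(t)\|<\infty\bigr\},
\end{equation*}
with $\gamma$ chosen strictly inside the gap corresponding to $I_{n_1}\cup\cdots\cup I_{n_2}$. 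Writing $u=u^{u}+u^{c}+u^{s}$ in the splitting, $T_{\omega,v_{0}}(u)(t)$ is defined by the standard three-term variation of constants formula: $\Psi(t,\omega)v_{0}$ on the center piece, the backward integral against the dichotomy Green's function on the unstable piece, and the forward integral on the stable piece, in each case with $\tilde F(u(s),\omega\cdot s)$ in the integrand. Because $\mathrm{Lip}(\tilde F)$ can be made arbitrarily small, $T_{\omega,v_{0}}$ becomes a uniform contraction on $\mathcal{B}_{\omega}$; I then set $h^{n_1,n_2}(v_{0},\omega)$ equal to the projection of the unique fixed point at $t=0$ onto $V^{0,n_1-1}(\omega)\oplus V^{n_2+1,\infty}(\omega)$. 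The bounds $h^{n_1,n_2}(v_{0},\omega)=o(\|v_{0}\|)$ and $\|\partial_{v_{0}}h^{n_1,n_2}\|\le K_{0}$, together with the claimed tangency and continuous dependence on $\omega$, follow from standard contraction estimates combined with a fibre-contraction / implicit-function argument for $C^{1}$-smoothness.

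Property (ii) is then immediate: the graph is built from globally exponentially-bounded solutions of the modified equation, and as long as $\|\Phi_{t}(v,\omega)\|<\delta^{*}$ on $[0,T]$ the truncated equation agrees with the original one, so $\Phi_{t}(v,\omega)$ must coincide with that fixed-point solution and therefore lie on $W^{n_1,n_2}(\omega\cdot t,\delta^{*})$. The point I expect to be the main technical obstacle is the contraction estimate itself in the weighted space: since $V^{n_2+1,\infty}(\omega)$ is infinite-dimensional and $F$ only takes values in $X_{0}=L^{2}(S^{1})$, one must use the smoothing property of the sectorial operator $A(\omega\cdot t)$ together with the H\"older-in-$t$ hypothesis on $F^{\omega}$ to control the Green's-function convolutions in the $X$-norm on both sides. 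This is precisely the sectorial ingredient supplied by \cite{P.Bates,Chow1991,Chow1994-2,Hen}, and is what the cited references are being invoked for.
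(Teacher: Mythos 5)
The paper offers no proof of this lemma beyond the remark that it can be proved by the arguments in \cite{P.Bates,Chow1991,Chow1994-2,Hen}, and your proposal is precisely the standard Lyapunov--Perron construction carried out in those references: radial cut-off of $F$, a weighted-space contraction defining the graph map $h^{n_1,n_2}$, a fibre-contraction argument for $C^1$-smoothness and tangency, and the sectorial smoothing estimates needed because $F$ takes values only in $X_0$. Your approach is therefore essentially the same as the paper's; only minor bookkeeping would need tightening in a full write-up (in particular, for general $n_1\leq n_2$ the spectral interval $\cup_{k=n_1}^{n_2}I_k$ need not contain $0$, so one should use two separate exponential weights chosen in the gaps $(b_{n_2+1},a_{n_2})$ and $(b_{n_1},a_{n_1-1})$ for $t\to-\infty$ and $t\to+\infty$ respectively, rather than a single symmetric weight $e^{-\gamma\abs{t}}$, and the roles of $a^-$ and $b^+$ in your dichotomy estimates are interchanged).
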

 If $0\in I_{n_0}=[a_{n_0},b_{n_0}]\subset\sigma(\Omega)$, then $W^s(\omega,\delta^*)=W^{n_0+1,\infty}(\omega,\delta^*)$, $W^{cs}(\omega,\delta^*)=W^{n_0,\infty}(\omega,\delta^*)$, $W^c(\omega,\delta^*)=W^{n_0,n_0}(\omega,\delta^*)$, $W^{cu}(\omega,\delta^*)=W^{0,n_0}(\omega,\delta^*)$, and $W^u(\omega,\delta^*)=W^{0,n_0-1}(\omega,\delta^*)$ are referred to as {\it local stable, center stable, center, center unstable, and unstable manifolds} of \eqref{lipschitz} at $\omega\in \Omega$, respectively.\par

In the following remark, we will list some useful properties of local invariant manifolds.  These properties also can be proved by following the similar argument in \cite{P.Bates,Chow1991, Chow1994-2, Hen}.
\begin{remark}\label{overflow-invariant}
{\rm
(1) $W^s(\omega,\delta^*)$ and $W^u(\omega,\delta^*)$ are overflowing invariant in the sense that if $\delta^*$ is sufficiently small, then
\[
\Phi_t(W^s(\omega,\delta^*),\omega)\subset W^s(\omega\cdot t,\delta^*),
\]
for $t$ sufficiently positive, and
\[
\Phi_t(W^u(\omega,\delta^*),\omega)\subset W^u(\omega\cdot t,\delta^*),
\]
for $t$ sufficiently negative. $W^s(\omega,\delta^*)$ and $W^u(\omega,\delta^*)$  are unique and have the following characterizations:
there are $\delta_1^*,\delta_2^*>0$ such that
\begin{align*}
&\{v\in X\, :\, \|\Phi_t(v,\omega)\|\le \delta_1^*\,\,{\rm for}\,\, t\ge 0\,\textnormal{ and }\Phi_t(v,\omega)\to 0\,\, \textnormal{exponentially as}\,\, t\to\infty\}\\
&\subset
W^s(\omega,\delta^*)\subset \{v\in X\, :\, \|v\|\le \delta_2^*,\,\, \|\Phi_t(v,\omega)\|\to 0\,\, {\rm as}\,\, t\to\infty\}
\end{align*}
and
\begin{align*}
&\{v\in X\, :\,\textnormal{the backward orbit } \Phi_t(v,\omega) \textnormal{ exists and } \|\Phi_t(v,\omega)\|\le \delta_1^*\,\,{\rm for}\,\, t\le 0,\\
&\textnormal{ further, }\Phi_t(v,\omega)\to 0\,\, \textnormal{exponentially as}\,\, t\to -\infty\}\\
&\subset
W^u(\omega,\delta^*)\subset \{v\in X\, :\, \|v\|\le \delta_2^*,\,\, \|\Phi_t(v,\omega)\|\to 0\,\, {\rm as}\,\, t\to -\infty\}.
\end{align*}

Moreover, one can find constants $\alpha$, $C>0$, such that for any $\omega\in \Omega$, $v^s\in W^s(\omega,\delta^*)$, $v^u\in W^u(\omega,\delta^*)$,
\begin{equation}\label{exponen-decrea}
\begin{split}
  \|\Phi_t(v^s,\omega)\|&\leq Ce^{-\frac{\alpha}{2}t}\|v^s\|\quad \text{for}\ t\geq 0,\\
  \|\Phi_t(v^u,\omega)\|&\leq Ce^{\frac{\alpha}{2}t}\|v^u\|\quad \text{for}\ t\leq 0.
\end{split}
\end{equation}
\vbox{}

(2) $W^{cs}(\omega,\delta^*)$ (choose $\delta^*$ smaller if necessary) has a repulsion property in the sense that if $\norm{v}<\delta^*$ but $v\notin W^{cs}(\omega,\delta^*)$, then there is $T>0$ such that $\norm{\Phi_T(v,\omega)}\ge \delta^*$. Consequently, if $\norm{\Phi_t(v,\omega)}<\delta^*$ for all $t\ge 0$ then one may conclude that $v\in W^{cs}(\omega,\delta^*)$. Note that $W^{cs}(\omega,\delta^*)$ is not unique in general.

\vskip 3mm

(3) $W^{cu}(\omega,\delta^*)$ has an attracting property  in the sense that if $\|\Phi_t(v,\omega)\|<\delta^*$ for all $t\ge 0$, then $v^*\in W^{cu}(\omega^*,\delta^*)$ whenever $(\Phi_{t_n}(v,\omega),\omega\cdot t_n)\to (v^*,\omega^*)$ with some $t_n\to \infty$. Moreover, one can choose $\delta^*$ smaller such that, if  $\norm{v}<\delta^*$ with a unique backward orbit $\Phi_{t}(v,\omega)(t\le 0)$ but $v\notin W^{cu}(\omega,\delta^*)$, then there is $T<0$ such that $\norm{\Phi_{T}(v,\omega)}\ge \delta^*$. As a consequence, if $v$ has a unique backward orbit $\Phi_{t}(v,\omega)(t\le 0)$ with $\norm{\Phi_t(v,\omega)}<\delta^*$ for all $t\le 0$ then, one may conclude that $v\in W^{cu}(\omega,\delta^*)$. Note also that $W^{cu}(\omega,\delta^*)$ is not unique in general.

\vskip 3mm

(4) By the invariant foliation theory in \cite{Chow1991, Chow1994-2}, one has that for any $\omega\in \Omega$,
\[
 W^{cs}(\omega,\delta^*)={\cup}_{u_c\in W^c(\omega,\delta^*)}\bar{W}_s(u_c,\omega,\delta^*)\ ({\rm resp.}\, W^{cu}(\omega,\delta^*)={\cup}_{u_c\in W^c(\omega,\delta^*)}\bar{W}_u(u_c,\omega,\delta^*)),
\]
where $\bar{W}_s(u_c,\omega,\delta^*)$ (resp. $\bar{W}_u(u_c,\omega,\delta^*)$) is the so-called {\it stable leaf} (resp. {\it unstable leaf}) of \eqref{lipschitz} at $u_c$. It is invariant in the sense that if $\tau>0$ (resp. $\tau<0$) is such that $\Phi_t(u_c,\omega)\in W^{c}(\omega\cdot t,\delta^*)$ and $\Phi_t(u,\omega)\in W^{cs}(\omega,\delta^*)$ (resp. $\Phi_t(u,\omega)\in W^{cu}(\omega,\delta^*)$) for all $0\leq t<\tau$ (resp. $\tau<t\leq 0$), where $u\in \bar{W}_s(u_c,\omega,\delta^*)$ (resp. $u\in \bar{W}_u(u_c,\omega,\delta^*)$), then $\Phi_t(u,\omega)\in\bar{W}_s(\Phi_t(u_c,\omega),\omega\cdot t,\delta^*)$ (resp. $\Phi_t(u,\omega)\in\bar{W}_u(\Phi_t(u_c,\omega),\omega\cdot t,\delta^*)$) for $0\leq t<\tau$ (resp. $\tau<t\leq 0$). Moreover, there are $K,\beta >0$ such that for any $u\in\bar{W}_s(u_c,\omega,\delta^*)$ (resp. $u\in\bar{W}_u(u_c,\omega,\delta^*)$) and $\tau>0$ (resp. $\tau<0$) with $\Phi_t(u,\omega)\in W^{cs}(\omega\cdot t,\delta^*)$ (resp. $\Phi_t(u,\omega)\in W^{cu}(\omega\cdot t,\delta^*)$), $\Phi_t(u_c,\omega)\in W^{c}(\omega\cdot t,\delta^*)$ for $0\leq t<\tau$ (resp. $\tau<t\leq 0$), one has that
\begin{equation*}
\begin{split}
\|\Phi_t(u,\omega)-\Phi_t(u_c,\omega)\|&\le  Ke^{-\beta t}\|u-u_c\|\\
(\mathrm{resp}.\ \|\Phi_t(u,\omega)-\Phi_t(u_c,\omega)\|&\le  Ke^{\beta t}\|u-u_c\|)
\end{split}
\end{equation*}
for $0\leq t<\tau$ (resp. $\tau< t\leq 0$).
}

\end{remark}

\section{Structure of $\omega$-limit sets}

This section is devoted to the structure of the general $\omega$-limit sets of \eqref{equation-lim2}. Let $(u_0,g_0)\in X\times H(f)$ be such that the motion $\Pi^{t}(u_0,g_0)$($t\geq 0$) is bounded, we will prove the following theorem.

\begin{theorem}\label{structure}
Assume that $f(t,u,u_x)=f(t,u,-u_x)$. Then the $\omega$-limit set $\omega(u_0,g_0)$ of \eqref{equation-lim2} contains at most two minimal sets. Moreover, one of the following is true:
\begin{itemize}
\item[{ \rm (i)}] $\omega(u_0,g_0)$ is a minimal invariant set.
\item[{ \rm (ii)}] $\omega(u_0,g_0)=M_1\cup M_{11}$, where $M_1$ is minimal, $M_{11}\neq \emptyset$, and $M_{11}$ connects $M_1$ in the sense that if $(u_{11},g)\in M_{11}$, then $\omega(u_{11},g)\subset M_1$ and $\alpha (u_{11},g)\subset M_1$.

\item[{ \rm (iii)}]  $\omega(u_0,g_0)=M_1\cup M_2\cup M_{12}$, where $M_1$, $M_2$ are minimal sets, $M_{12}\not =\emptyset$, and for any $u_{12}\in M_{12}$, either  $M_1\subset \omega(u_{12},g)$ and $M_2\cap \omega(u_{12},g)=\emptyset$, or $M_2\subset \omega(u_{12},g)$ and $M_1\cap \omega(u_{12},g)=\emptyset$, or  $M_1\cup M_2\subset \omega(u_{12},g)$ (and analogous for $\alpha(u_{12},g)$).
\end{itemize}

\end{theorem}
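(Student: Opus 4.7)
The plan is to combine the zero-number calculus from Section 2 with the reflection symmetry encoded by $R:X\to X$, $(Ru)(x):=u(-x)$. Write $\Omega:=\omega(u_0,g_0)$. It is nonempty, compact, connected, positively invariant under $\Pi^t$ and admits a flow extension, so every $(u,g)\in\Omega$ has a full backward orbit in $\Omega$. The hypothesis $f(t,u,u_x)=f(t,u,-u_x)$ gives the fiber-wise commutation $\varphi(t,\cdot;Ru,g)=R\varphi(t,\cdot;u,g)$, so $R$ sends bounded orbits to bounded orbits and minimal sets to minimal sets. The first technical step is to prove that for any two distinct points $(u,g),(v,g)\in\Omega$, the integer $z\bigl(\varphi(t,\cdot;u,g)-\varphi(t,\cdot;v,g)\bigr)$ is finite and constant on $t\in\mathbb{R}$ with only simple zeros. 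This follows by choosing approximating sequences $\Pi^{t_n}(u_0,g_0)\to(u',g')$ with $(u',g')=\Pi^{-T}(u,g)$ or $\Pi^{-T}(v,g)$ for large $T$ (using the flow extension of $\Omega$) and applying Lemma \ref{sequence-limit} together with Lemma \ref{difference-lapnumber}. Specializing $v=Ru$ for non-symmetric $u$ gives $z(\varphi(t,\cdot;u,g)-R\varphi(t,\cdot;u,g))\equiv k(u,g)$, an even integer $\ge 2$, since $u(x)-u(-x)$ is odd about $x=0$ and automatically vanishes at $x=0$ and $x=\pi$.

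The key quantitative step is to bound the number of minimal sets in $\Omega$ by two. By \cite{SWZ}, each minimal set $M\subset\Omega$ is an almost $1$-cover of $H(f)$, so there is a residual set $Y_*\subset H(f)$ such that every minimal set in $\Omega$ intersects $p^{-1}(g)$ at a single point for each $g\in Y_*$. Suppose, toward a contradiction, that $M_1,M_2,M_3\subset\Omega$ are three distinct minimal sets; fix $g\in Y_*$ and write $M_i\cap p^{-1}(g)=\{u_i\}$. The previous step yields a full list of invariants $z(u_i-u_j)$, $z(u_i-Ru_j)$, $z(u_i-Ru_i)$, all constant in time with only simple zeros, satisfying the reflection identity $z(Ru-Rv)=z(u-v)$. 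Exploiting these invariants together with the forced simple zeros of each $u_i-Ru_i$ at $x=0,\pi$ and an order-type bookkeeping of the three profiles $u_1,u_2,u_3$ as $x$ traverses $S^1$, one produces a combinatorial incompatibility. This combinatorial step is the principal obstacle; it is exactly here that the reflection symmetry is used to rule out a third minimal set, while the bound of two is sharp in view of the examples in Section 6.

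Once at most two minimal sets are in play, the trichotomy (i)--(iii) follows from the standard structural theory of flow-extended $\omega$-limit sets. The connectedness of $\Omega$, combined with the fact that the $\alpha$- and $\omega$-limits of each orbit in $\Omega$ are compact invariant subsets of $\Omega$ (and therefore contain minimal subsets of $\Omega$), forces each non-minimal orbit to connect the (at most two) available minimal sets. This yields case (i) when $\Omega$ is itself a single minimal set; case (ii) when $\Omega=M_1\cup M_{11}$ for a nonempty homoclinic-type connector $M_{11}$ whose orbits have both $\alpha$- and $\omega$-limits in $M_1$; and case (iii) when $\Omega=M_1\cup M_2\cup M_{12}$ for two minimal sets joined by a connector $M_{12}$, with the three possible $\alpha/\omega$ configurations simply recording the compact invariant subsets of $M_1\cup M_2$ that each limit may hit.
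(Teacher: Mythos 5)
There is a genuine gap, and it sits exactly where the theorem lives. Your argument for ``at most two minimal sets'' ends with the sentence that an ``order-type bookkeeping of the three profiles \ldots{} produces a combinatorial incompatibility,'' which you yourself flag as ``the principal obstacle.'' That incompatibility is never exhibited, and it is not clear it exists along your route: the constants $z(u_i-u_j)$, $z(u_i-Ru_j)$, $z(u_i-Ru_i)$ for a single fixed reflection $R u(x)=u(-x)$ carry no evident obstruction to three minimal sets (nothing forces these integers to be related to one another), and your argument never uses the one piece of information that actually kills the third minimal set in the paper, namely that the \emph{single} orbit $\Pi^t(u_0,g_0)$ must accumulate on all of them. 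A secondary but real problem is your first technical step: you assert that $z(\varphi(t,\cdot;u,g)-\varphi(t,\cdot;v,g))$ is constant in $t\in\mathbb{R}$ for \emph{every} pair of distinct points of $\Omega$. Lemma \ref{sequence-limit} requires the two points to be simultaneous limits of two orbits along a \emph{common} time sequence; from the single orbit through $(u_0,g_0)$ you only get this for pairs lying on one backward-extended orbit, or (as in Lemma \ref{constant-on-twosets}, with extra work using minimality and the proximality results of \cite{SWZ}) for pairs taken from minimal sets. For a general pair, e.g.\ a point of $M_{11}$ against a point of $M_1$ in case (ii), the zero number can a priori drop at a finite time, so the claimed invariants are not available in the generality you need.

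The paper's mechanism is different and worth contrasting. It uses the reflections $\rho_a u(\cdot)=u(2a-\cdot)$ about \emph{every} $a\in S^1$, not just $a=0$: Lemma \ref{symb} shows $\mathrm{sgn}\,\varphi_x(t,a;u_0,g_0)$ stabilizes for each $a$, and Proposition \ref{order} converts this into the existence of a single point $x_0$ with $u_x(x_0)=0$ for \emph{all} $(u,g)\in\Omega$. Since every difference of points from two minimal sets has constant zero number (Lemma \ref{constant-on-twosets}) and hence only simple zeros, it can never vanish at the common critical point $x_0$; thus the value $u(x_0)$ strictly and coherently orders the fibers of the minimal sets (Lemma \ref{separated}), with a uniform gap $\delta$. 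Three minimal sets would then be ordered $M_1<M_2<M_3$ at $x_0$, and comparing the approaching orbit $\varphi(t,\cdot;u_0,g_0)$ with the middle set $M_2$ at $x_0$ yields the sandwich contradiction. Your trichotomy paragraph (connectedness plus the observation that $\omega(u_{12},g)$ and $\alpha(u_{12},g)$ must meet $M_1\cup M_2$ or else produce a third minimal set) does match the paper's, but it only becomes a proof once the two-minimal-set bound is actually established.
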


\begin{remark}{\rm
 We remark that all the three cases in Theorem \ref{structure} may occur (see Examples in Section 6).}
\end{remark}

Before we proceed to prove Theorem \ref{structure}, we first give an important property of $\omega(u_0,g_0)$, which is motivated by the proof of \cite[Theorem B]{Chen1989160}.

\begin{proposition}\label{order}
Assume that $f(t,u,u_x)=f(t,u,-u_x)$ is $C^1$-admissible. Let $(u_0,g_0)\in X\times H(f)$ be such that the motion $\Pi^{t}(u_0,g_0)$($t>0$) is bounded and $\omega(u_0,g_0)$ be the $\omega$-limit set. Then, there is a point $x_0\in S^1$ such that for any $(u,g)\in \omega(u_0,g_0)$, one has $u_x(x_0)=0$.
\end{proposition}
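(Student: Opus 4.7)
The plan is to find $x_0\in S^1$ with $\varphi_x(t,x_0;u_0,g_0)\to 0$ as $t\to\infty$; since the embedding $X\hookrightarrow C^1(S^1)$ is continuous, this forces $u_x(x_0)=0$ for every $(u,g)\in\omega(u_0,g_0)$. The key device is the reflection symmetry: for any $x_0\in S^1$, $\tilde\varphi(t,x):=\varphi(t,2x_0-x;u_0,g_0)$ is also a solution of $u_t=u_{xx}+g_0(t,u,u_x)$ because $f(t,u,p)=f(t,u,-p)$. Writing $w_{x_0}(t,x):=\varphi(t,x;u_0,g_0)-\tilde\varphi(t,x)$ and expanding the nonlinearity by the mean-value theorem, $w_{x_0}$ solves a linear parabolic equation on $S^1$ of the form treated in Lemma~\ref{zero-number}; moreover the construction forces $w_{x_0}(t,x_0)=w_{x_0}(t,x_0+\pi)\equiv 0$ and the antisymmetry $w_{x_0}(t,2x_0-x)=-w_{x_0}(t,x)$, so in particular $\partial_xw_{x_0}(t,x_0)=2\varphi_x(t,x_0)$.

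By backward uniqueness for linear parabolic equations, the following dichotomy holds at each $x_0$: either (A) $w_{x_0}\equiv 0$, i.e.\ $\varphi(t,\cdot;u_0,g_0)$ is reflection-symmetric about $x_0$ for every $t\ge 0$---in which case $\varphi_x(t,x_0)\equiv 0$ and $x_0$ is already the desired common critical point---or (B) $w_{x_0}(t,\cdot)\not\equiv 0$ for every $t\ge 0$, in which case Lemma~\ref{zero-number} yields that $z(w_{x_0}(t,\cdot))\ge 2$ is eventually constant with all zeros simple for $t\ge T(x_0)$. Simplicity of the persistent zero at $x=x_0$ then reads $\varphi_x(t,x_0)\ne 0$ for $t\ge T(x_0)$. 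If case (A) holds for some $x_0$ we are done, so assume (B) holds for every $x_0\in S^1$.

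Apply Lemma~\ref{zero-number} now to the linear parabolic equation satisfied by $\varphi_x$: either $\varphi_x\equiv 0$ (and any $x_0$ works) or $N:=z(\varphi_x(t,\cdot))\ge 2$ is eventually constant with only simple zeros for $t\ge T_0$, and the implicit function theorem produces $N$ continuous disjoint trajectories $x_1(t),\dots,x_N(t)$ on $S^1$ for $t\ge T_0$. The crucial step is to show each $x_i(t)$ converges as $t\to\infty$. If some $x_i$ failed to converge, its accumulation set would be a closed connected subset of $S^1$ containing more than one point, hence would contain an open arc; picking $y$ in the interior, the intermediate-value theorem applied to the continuous function $x_i$ yields a sequence $t_k\to\infty$ with $x_i(t_k)=y$, so $\varphi_x(t_k,y)=0$ for arbitrarily large $t_k$, contradicting the second paragraph's conclusion that $\varphi_x(t,y)\ne 0$ for all $t\ge T(y)$. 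Setting $y_i:=\lim_{t\to\infty}x_i(t)$ and using the uniform parabolic bound on $\|\varphi_{xx}(t,\cdot)\|_\infty$, one finds
\[
|\varphi_x(t,y_i)|=|\varphi_x(t,y_i)-\varphi_x(t,x_i(t))|\le \|\varphi_{xx}(t,\cdot)\|_\infty\,|y_i-x_i(t)|\longrightarrow 0,
\]
so $x_0:=y_1$ is a common critical point. The main obstacle is this convergence of every trajectory $x_i(t)$---the step that essentially uses the reflection symmetry, in analogy with Matano's classical observation that reflection symmetry forbids nontrivial rotating waves in the autonomous case.
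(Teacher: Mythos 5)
Your proof is correct, and its engine is the same as the paper's: for each $a\in S^1$ you compare $\varphi(t,\cdot;u_0,g_0)$ with its reflection about $a$, note that the difference solves a linear parabolic equation (this is where $f(t,u,p)=f(t,u,-p)$ enters) with a persistent zero at $a$ whose $x$-derivative there equals $2\varphi_x(t,a)$, and conclude from Lemma \ref{zero-number}(c) that either the solution is reflection-symmetric about $a$ for all time or $\varphi_x(t,a)\ne 0$ eventually; this is precisely Lemma \ref{symb} of the paper. Where you genuinely diverge is the assembly. The paper stays on the $\omega$-limit set: it fixes one spatially-inhomogeneous limit profile $u$, uses the reflection characterization (Lemma \ref{refle-prop}) to show the critical points of $u$ are isolated, takes $x_0$ a minimum point of $u$, and reads off from the eventual sign of $\varphi_x(t,\cdot)$ on either side of $x_0$ that every other limit profile $\tilde u$ has $\tilde u_x\le 0$ to the left and $\tilde u_x\ge 0$ to the right, hence $\tilde u_x(x_0)=0$. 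You instead apply the zero-number lemma to $v=\varphi_x$ itself, obtain $N\ge 2$ simple zero curves $x_i(t)$ for large $t$, and force each to converge because a non-convergent curve would, by connectedness of its accumulation set and the intermediate value theorem, revisit some point $y$ at arbitrarily large times, contradicting the eventual nonvanishing of $\varphi_x(\cdot,y)$. Both arguments are sound; yours costs a few extra standard ingredients (continuation of simple zero curves, and a uniform bound on $\varphi_{xx}$ --- though the latter is dispensable, since $C^1$-convergence of $\varphi(t_n,\cdot)$ together with $x_i(t_n)\to y_i$ already gives $u_x(y_i)=\lim_n\varphi_x(t_n,x_i(t_n))=0$), but it buys slightly more: a point $x_0$ with $\varphi_x(t,x_0)\to 0$ along the whole forward orbit, and it avoids the paper's discreteness argument for the critical set of a limit profile. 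Two cosmetic points: forward uniqueness already yields your dichotomy (if $w_{x_0}(t_1,\cdot)\equiv 0$ for some $t_1$ then $\varphi_x(t,x_0)=0$ for all $t\ge t_1$, which suffices), so backward uniqueness is not needed; and the intermediate-value step should be phrased via a lift of $x_i$ to $\RR$, since a path on $S^1$ joining two accumulation points could a priori wind around the other way.
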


In order to prove Proposition \ref{order}, we introduce the following notation: for any $u\in X$ and $a\in S^1$, we define function $\rho_a u$ as below:
\begin{equation}\label{reflection}
\rho_a u(\cdot)=u(2a-\cdot)
\end{equation}
For any $a\in S^1$, if $\varphi(t,\cdot;u,g)$ is a bounded classical solution of \eqref{equation-lim1}, then since $g(t,u,u_x)=g(t,u,-u_x)$, $\rho_a\varphi(t,\cdot;u,g)$ is also a bounded classical solution of \eqref{equation-lim1} and $\rho_a\varphi(t,\cdot;u,g)=\varphi(t,\cdot;\rho_au,g)$.

\begin{lemma}\label{symb}
  Assume that $f(t,u,u_x)=f(t,u,-u_x)$ is $C^1$-admissible. Let $\varphi(t,\cdot;u,g)$ be a bounded solution of \eqref{equation-lim1} with $\varphi(0,\cdot;u_0,g)=u_0\in X$. Then we have
  \begin{equation}\label{symb-keep}
  \lim_{t\to\infty}{\mathrm{sgn}(\varphi_x(t,a;u_0,g))}
  \end{equation}
  exists for every $a\in S^1$.
\end{lemma}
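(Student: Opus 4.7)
The plan is to use the reflection symmetry to convert the question about the sign of $\varphi_x(t,a)$ into a question about the sign of the $x$-derivative of a difference of two solutions at a fixed zero, which can then be controlled by the zero-number lemma (Lemma \ref{difference-lapnumber}).

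More precisely, I would introduce the auxiliary function
\begin{equation*}
w(t,x):=\varphi(t,x;u_0,g)-\varphi(t,2a-x;u_0,g)=\varphi(t,x;u_0,g)-\rho_a\varphi(t,x;u_0,g).
\end{equation*}
Because $g(t,u,u_x)=g(t,u,-u_x)$, the function $\rho_a\varphi(t,\cdot;u_0,g)$ is again a bounded classical solution of \eqref{equation-lim1}, namely $\varphi(t,\cdot;\rho_a u_0,g)$. A direct computation gives $w(t,a)\equiv 0$ and $w_x(t,a)=2\varphi_x(t,a;u_0,g)$, so the desired limit $\lim_{t\to\infty}\mathrm{sgn}(\varphi_x(t,a;u_0,g))$ coincides with $\lim_{t\to\infty}\mathrm{sgn}(w_x(t,a))$.

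Next I would split into two cases. If $\rho_a u_0=u_0$, then by uniqueness of solutions $w\equiv 0$, hence $\varphi_x(t,a;u_0,g)\equiv 0$ and the limit equals $0$. Otherwise $\varphi(t,\cdot;u_0,g)$ and $\varphi(t,\cdot;\rho_a u_0,g)$ are distinct bounded solutions on $\mathbb{R}^+$, so Lemma \ref{difference-lapnumber} applies to $w(t,\cdot)$. By part (c) of that lemma there exists $T>0$ such that for all $t\ge T$ the function $w(t,\cdot)$ has only simple zeros in $S^1$ and $z(w(t,\cdot))$ is constant. Since $x=a$ is a zero of $w(t,\cdot)$ for every $t$, it must be a simple one for $t\ge T$; equivalently $w_x(t,a)\ne 0$ for $t\ge T$.

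Finally, I would invoke parabolic regularity: $w$ is a classical solution of a linear parabolic equation of the form \eqref{linear-equation}, so $w_x(t,a)$ is continuous in $t$ on $(0,\infty)$. A continuous real-valued function that never vanishes on $[T,\infty)$ has constant sign there, which yields existence of $\lim_{t\to\infty}\mathrm{sgn}(w_x(t,a))$ and hence of $\lim_{t\to\infty}\mathrm{sgn}(\varphi_x(t,a;u_0,g))$. There is no real technical obstacle here; the only subtle point is recognizing that the constant zero $x=a$ of $w$ is forced to become \emph{simple} once the total zero number stabilizes, which is exactly what prevents $\mathrm{sgn}(w_x(t,a))$ from flipping for large $t$.
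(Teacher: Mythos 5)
Your proposal is correct and follows essentially the same route as the paper: introduce $w=\pm(\varphi(t,\cdot;u_0,g)-\rho_a\varphi(t,\cdot;u_0,g))$, note $w(t,a)\equiv 0$ and $w_x(t,a)=\pm 2\varphi_x(t,a;u_0,g)$, dispose of the case $w\equiv 0$, and otherwise use the zero-number results (the paper writes the linear equation for $w$ and invokes Lemma \ref{zero-number}(c), which is what Lemma \ref{difference-lapnumber} encodes) to conclude that the persistent zero at $x=a$ is eventually simple, so $w_x(t,a)$ has constant sign for large $t$. The only cosmetic difference is that you cite Lemma \ref{difference-lapnumber}(c) for eventual simplicity of zeros, which strictly requires combining (b) and (c), but that inference is immediate.
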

\begin{proof}
 Denote $u(t,x)=\varphi(t,x;u_0,g)$ and let $w(t,x)=\rho_a u(t,x)-u(t,x)$, then $w(t,x)$ satisfies
\begin{equation}\label{differen-equa}
  w_t=w_{xx}+b(t,x)w_x+c(t,x)w,
\end{equation}
  where
\begin{equation*}
 b(t,x)=-\int_0^1\dfrac{\partial g}{\partial p}(t,x,u(t,x),su_x(t,x)+(1-s)u_x(t,2a-x))ds
\end{equation*}
and
\begin{equation*}
  c(t,x)=-\int_0^1\dfrac{\partial g}{\partial u}(t,x,su(t,x)+(1-s)u(t,2a-x),{u}_x(t,2a-x))ds.
\end{equation*}
Moreover, for all $t\geq 0$, one has
\begin{equation*}
w(t,a)=0\ \text{and}\ w_x(t,a)=-2u_x(t,a).
\end{equation*}
If $w\equiv 0$, then $u_x(t,a)\equiv 0$ for $t\geq 0$ and \eqref{symb-keep} is established. Thus, we only consider the case where $w\not\equiv 0$. Since $g$ is $C^1$-admissible, equation \eqref{differen-equa} satisfies the assumption in Lemma 2.1. So, Lemma \ref{zero-number} (c), there exists $T>0$ such that, for all $t\geq T$, $w(t,\cdot)$ has only simple zeros on $S^1$. Since $w(t,a)=0$ for all $t\geq 0$, $w_x(t,a)=-2u_x(t,a)\neq 0$ for all $t\geq T$, which implies that $u_x(t,a)$ is always positive or always negative for $t\geq T$. Thus, $\mathrm{sign}(u_x(t,a))=1$ or $-1$ for all $t\geq T$). The proof of \eqref{symb-keep} is completed.
\end{proof}

A point $u\in X$ is called {\it spatially-homogeneous} if $u(\cdot)$ is independent of the spatial variable $x$. Otherwise, $u$ is called {\it spatially-inhomogeneous}.
\begin{lemma}\label{refle-prop}
Let all the hypotheses in Lemma \ref{symb} hold. Then for any point $(u,g)\in \omega(u_0,g_0)$ and $a\in S^1$, we have:
  \begin{itemize}
    \item[{ \rm (i)}] either $\rho_a u=u$ or $\rho_a u-u$ has only simple zeros on $S^1$;
    \item[{ \rm (ii)}] $\rho_a u=u$ if and only if $u_x(a)=0$.
  \end{itemize}
\end{lemma}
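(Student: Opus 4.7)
The plan is to exploit the reflection symmetry identity $\rho_a\varphi(t,\cdot;v,g)=\varphi(t,\cdot;\rho_av,g)$, which holds for every $v\in X$ and $g\in H(f)$ because $g(t,u,u_x)=g(t,u,-u_x)$, and then appeal to Lemma \ref{sequence-limit} and Lemma \ref{difference-lapnumber}(b) to rule out multiple zeros in the limit.

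For (i), choose a sequence $t_n\to\infty$ with $\Pi^{t_n}(u_0,g_0)\to(u,g)$. Since $\rho_a:X\to X$ is continuous, the symmetry identity yields $\Pi^{t_n}(\rho_a u_0,g_0)\to(\rho_a u,g)$. If $\rho_a u=u$ we are done, so assume $\rho_a u\neq u$; then necessarily $\rho_a u_0\neq u_0$, for otherwise the two forward orbits would coincide for all $t\ge 0$, forcing the limits to agree. Since $\omega(u_0,g_0)$ admits a flow extension and $(\rho_a u,g)\in\omega(\rho_a u_0,g_0)$, both $\Pi^t(u,g)$ and $\Pi^t(\rho_a u,g)$ are defined for all $t\in\mathbb{R}$. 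Apply Lemma \ref{sequence-limit} with $u^1=u_0,\,u^2=\rho_a u_0,\,u_0^1=u,\,u_0^2=\rho_a u$ to conclude that
\[
t\mapsto z\bigl(\varphi(t,\cdot;u,g)-\varphi(t,\cdot;\rho_a u,g)\bigr)=z\bigl(\varphi(t,\cdot;u,g)-\rho_a\varphi(t,\cdot;u,g)\bigr)
\]
is a finite constant on $\mathbb{R}$. Lemma \ref{difference-lapnumber}(b) then forbids a multiple zero at any time; specializing to $t=0$ shows that $\rho_a u-u$ has only simple zeros on $S^1$.

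For (ii), the forward direction follows by differentiating the identity $u(2a-x)=u(x)$ at $x=a$, which gives $u_x(a)=-u_x(a)=0$. For the converse, suppose $u_x(a)=0$ but $\rho_a u\neq u$; by (i), the function $w:=\rho_a u-u$ has only simple zeros. However, $w(a)=u(a)-u(a)=0$ and $w_x(a)=-u_x(a)-u_x(a)=-2u_x(a)=0$, so $a$ is a multiple zero of $w$, a contradiction. Hence $\rho_a u=u$.

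The main obstacle is verifying the hypotheses of Lemma \ref{sequence-limit}: one must ensure that the two forward orbits $\Pi^t(u_0,g_0)$ and $\Pi^t(\rho_a u_0,g_0)$ are genuinely distinct and that their limits $(u,g)$ and $(\rho_a u,g)$ are also distinct, so that the zero-number count is well-defined and nontrivial. The reflection symmetry $\rho_a$ handles both issues simultaneously via a clean contrapositive argument, after which the rest is a direct application of the zero-number machinery.
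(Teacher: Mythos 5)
Your proof is correct and follows essentially the same route as the paper's: pull back along a sequence $t_n\to\infty$, use the symmetry identity $\rho_a\varphi(t,\cdot;u_0,g_0)=\varphi(t,\cdot;\rho_au_0,g_0)$ together with Lemma \ref{sequence-limit} to get constancy of the zero number, and invoke Lemma \ref{difference-lapnumber}(b) to exclude multiple zeros, with (ii) then following from (i). The only difference is that you explicitly verify the nondegeneracy hypotheses of Lemma \ref{sequence-limit} (that $\rho_au_0\neq u_0$ when $\rho_au\neq u$), a detail the paper leaves implicit.
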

\begin{proof}
(i) Since $(u,g)\in \omega(u_0,g_0)$, there exists $t_n\to\infty$ such that $\varphi(t_n,\cdot;u_0,g_0)\to u$; and hence $\varphi(t_n,\cdot;\rho_au_0,g_0)\to \rho_au$. By virtue of Lemma \ref{sequence-limit}, $\rho_a u=u$; otherwise, $\rho_a u-u$ has only simple zeros on $S^1$. (ii) follows immediately from (i).
\end{proof}
\begin{proof}[Proof of Proposition \ref{order}]
 Given any $(u,g)\in \omega(u_0,g_0)$ with $u$ not being spatially-homogeneous. Choose $x_0\in S^1$ be a minimum point of $u$:
  \[
  u(x_0)=\min_{x\in S^1}u(x).
  \]
  Since $u$ is spatially-inhomogeneous, by virtue of Lemma \ref{refle-prop} (ii), one has $\{x\in S^1|u_x(x)=0\}$ is a discrete set (Otherwise, suppose that $a$ is an accumulation point of those $a_n$ for which $u_x(a_n)=u_x(a)=0$. Then, by Lemma \ref{refle-prop}(ii), $u(2a_n-x)=u(2a-x)=u(x)$ for each $x\in S^1$. So, putting $b_n=2a-2a_n$, we get a sequence $b_n\neq 0$, such that $b_n\to 0$ and $u(x)=u(x+b_n)$ for all $x\in S^1$. Consequently, $u$ is constant, a contradiction). It is therefore possible to find $x_1,x_2\in S^1$ such that $u_x(x)<0$ for $x\in (x_1,x_0)$ and $u_x(x)>0$ for $x\in (x_0,x_2)$. By Lemma \ref{symb}, for any other $(\tilde u,\tilde g)\in\omega(u_0,g_0)$, one has $\tilde u_x(x)\leq 0$ for $x\in (x_1,x_0)$ and $\tilde u_x(x)\geq 0$ for $x\in (x_0,x_2)$. Thus, $\tilde u_x(x_0)=0$. The proof is completed.
\end{proof}

Throughout this section, we always denote by $x_0$ the point in Proposition \ref{order} such that $u_x(x_0)=0$ for all $(u,g)\in \omega(u_0,g_0)$.

Before proving Theorem \ref{structure}, we still need the following two lemmas:
\begin{lemma}\label{constant-on-twosets}
Let $M_1,M_2\subset X\times H(f)$ be two minimal sets of \eqref{equation-lim2}. Then there is an integer $N>0$, such that for any $g\in H(f)$, $(u_i,g)\in M_i\cap p^{-1}(g)$($i=1,2$), one has
\begin{equation}\label{constant-on-twosets1}
  z(\varphi(t,\cdot;u_1,g)-\varphi(t,\cdot;u_2,g))=N,\quad \forall t\in \mathbb{R}.
\end{equation}
\end{lemma}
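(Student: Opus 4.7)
The plan is to prove this in two stages: (i) for each admissible pair $(u_1,g),(u_2,g)\in M_1\times_{H(f)}M_2$ the zero-number $\zeta(t):=z(\varphi(t,\cdot;u_1,g)-\varphi(t,\cdot;u_2,g))$ is constant in $t\in\mathbb{R}$; and (ii) this constant is independent of the chosen pair. Since $M_1,M_2$ are distinct minimal sets they are disjoint, each admits a flow extension, and the orbits $\Pi^t(u_i,g)$ are defined for every $t\in\mathbb{R}$ with $u_1\neq u_2$ at all times. Hence $\zeta$ is a positive integer-valued function on $\mathbb{R}$, non-increasing by Lemma \ref{difference-lapnumber}(a), and the asymptotic values $N^{\pm}:=\lim_{t\to\pm\infty}\zeta(t)$ exist with $N^-\ge N^+\ge 1$. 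My goal is to show $N^-=N^+$ and that this common value does not depend on the pair.

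For stage (i), I exploit the compactness of the fiber product $M_1\times_{H(f)}M_2:=\{((v_1,g'),(v_2,g')):(v_i,g')\in M_i\}\subset X\times X\times H(f)$, which is invariant under the diagonal skew-product flow. Passing to common subsequences, I obtain $t_n\to+\infty$ with $\Pi^{t_n}(u_i,g)\to(\hat u_i,\hat g)$ and $s_n\to-\infty$ with $\Pi^{s_n}(u_i,g)\to(\tilde u_i,\tilde g)$ for $i=1,2$, where the limits lie in the respective $M_i$ by closedness. Lemma \ref{sequence-limit} applied forward and backward then produces $z(\varphi(t,\cdot;\hat u_1,\hat g)-\varphi(t,\cdot;\hat u_2,\hat g))\equiv\hat N$ and $z(\varphi(t,\cdot;\tilde u_1,\tilde g)-\varphi(t,\cdot;\tilde u_2,\tilde g))\equiv\tilde N$ on all of $\mathbb{R}$. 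Choosing $T>0$ large so that Lemma \ref{difference-lapnumber}(c) forces $\varphi(T,\cdot;\hat u_1,\hat g)-\varphi(T,\cdot;\hat u_2,\hat g)$ to have only simple zeros, the convergence $\varphi(T+t_n,\cdot;u_i,g)\to\varphi(T,\cdot;\hat u_i,\hat g)$ in $X$ combined with Lemma \ref{zero-cons-local} gives $\zeta(T+t_n)=\hat N$ for $n$ large, whence $\hat N=N^+$. The analogous argument at $s_n$ yields $\tilde N=N^-$.

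To close the gap $N^-=N^+$ and simultaneously establish pair-independence, I invoke the structural result of \cite{SWZ}: under the reflection symmetry $f(t,u,u_x)=f(t,u,-u_x)$, every minimal set of the skew-product semiflow is an almost $1$-cover of $H(f)$. Hence there is a residual dense subset $Y_*\subset H(f)$ such that $M_i\cap p^{-1}(g)=\{(u_i(g),g)\}$ is a singleton for every $g\in Y_*$ and $i=1,2$. Choose minimal subsets $\tilde M^+$ and $\tilde M^-$ lying in the $\omega$- and $\alpha$-limit sets of the orbit of $((u_1,g),(u_2,g))$ inside $M_1\times_{H(f)}M_2$. By minimality of each $M_i$, the projection of $\tilde M^{\pm}$ onto each $X$-coordinate is a nonempty compact invariant subset of $M_i$, hence equals $M_i$; in particular both $\tilde M^+$ and $\tilde M^-$ contain a point over every $\bar g\in H(f)$. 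For $\bar g\in Y_*$ the singleton-fiber property forces these two points to coincide as $((u_1(\bar g),\bar g),(u_2(\bar g),\bar g))$, and evaluating the constant zero-numbers from stage (i) at this common point gives $N^+=z(u_1(\bar g)-u_2(\bar g))=N^-$. Monotonicity of $\zeta$ then upgrades this to $\zeta(t)\equiv N^+$ on $\mathbb{R}$, and applying the same argument to any other admissible pair returns the same value $z(u_1(\bar g)-u_2(\bar g))$, proving the uniform $N$. The main obstacle is reconciling the a priori different constants $\hat N,\tilde N$ on the two limit minimal sets, which is exactly where the almost $1$-cover property from \cite{SWZ} — and hence the reflection-symmetry hypothesis — enters.
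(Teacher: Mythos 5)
Your proposal is correct, but it closes the key gap (forward constant $=$ backward constant, and independence of the chosen pair) by a genuinely different mechanism than the paper. Your stage (i) --- eventual constancy in both time directions via Lemma \ref{difference-lapnumber}, Lemma \ref{sequence-limit} and Lemma \ref{zero-cons-local} --- coincides with the paper's derivation of \eqref{positive-constant}--\eqref{negative-constant}. For stage (ii), the paper invokes \cite[Theorem 4.2]{SWZ}, namely that any two points of a minimal set lying over the same $g$ form a two-sided proximal pair: it picks $t_n\to\infty$ along which $(u_2,g)$ returns to itself, lets $\bar u_1$ be the corresponding limit of $u_1$, collapses $(u_1,g)$ and $(\bar u_1,g)$ onto a common point of $M_1$ both forward and backward in time, and then uses continuity of $z$ near simple zeros to force $N_1=N=N_2$; the same proximality handles independence of the pair and of $g$. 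You instead pass to the fiber product $M_1\times_{H(f)}M_2$, take minimal subsets of the $\omega$- and $\alpha$-limit sets of the pair orbit, note that each projects onto all of $M_i$, and pin both down at a common point over some $\bar g$ with singleton fibers; this yields $N^+=N^-$ and pair-independence in one stroke. The two routes rest on essentially the same structural input from \cite{SWZ} (the almost $1$-covering of minimal sets under the reflection symmetry is what underlies the two-sided proximality the paper uses), but yours packages it more economically. Two points to tighten: the paper's definition of ``almost $1$-cover'' only guarantees \emph{one} singleton fiber, so you should justify (or cite) the standard fact that the set of singleton fibers of an almost $1$-cover over a minimal base is residual --- you genuinely need residuality to intersect the two such sets for $M_1$ and $M_2$; and you should state explicitly that \emph{every} point of the $\omega$-limit (resp. $\alpha$-limit) set of the pair carries the constant $N^+$ (resp. $N^-$), not only the single subsequential limit produced in stage (i) --- this follows by rerunning your stage (i) argument at an arbitrary point, but it is exactly the step that lets you evaluate at the common point over $\bar g$. (Also, $N^+\ge 1$ should read $N^+\ge 0$: the paper itself later uses the value $0$ when both minimal sets are spatially homogeneous.)
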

\begin{proof}
For any $g\in H(f)$, $(u_i,g)\in M_i\cap p^{-1}(g)$($i=1,2$), we {\it claim that there is an integer $N\in \mathbb {N}$ such that $z(\varphi(t,\cdot;u_1,g)-\varphi(t,\cdot;u_2,g))=N$ for all $t\in \mathbb {R}$}. To prove this claim, we first note that there are $T>0$ and $N_1, N_2$ such that
\begin{equation}\label{positive-constant}
 z(\varphi(t,\cdot;u_1,g)-\varphi(t,\cdot;u_2,g))=N_1 \quad \text{for all}\ t\ge T
\end{equation}
and
\begin{equation}\label{negative-constant}
 z(\varphi(t,\cdot;u_1,g)-\varphi(t,\cdot;u_2,g))=N_2 \quad \text{for all}\ t\le -T.
\end{equation}
In fact, \eqref{positive-constant} follows directly from Lemma \ref{difference-lapnumber}. As for \eqref{negative-constant}, one can take a sequence $t_n\to -\infty$ such that $\Pi^{t_n}(u_i,g)$($i=1,2$) converges to $(\tilde u_i,\tilde g)\in M_i\cap p^{-1}(\tilde g)$ as $n\to \infty$, for $i=1,2$. By Lemma \ref{sequence-limit},  there is an $N_2\in\mathbb{N}$ such that
\begin{equation}\label{limit-const}
  z(\varphi(t,\cdot;\tilde u_1,\tilde g)-\varphi(t,\cdot;\tilde u_2,\tilde g))=N_2,\quad\text{for all}\ t\in \mathbb{R}.
\end{equation}
Therefore,
\begin{equation}\label{asympto}
 z(\varphi(t_n,\cdot;u_1,g)-\varphi(t_n,\cdot;u_2,g))=N_2
\end{equation}
for $n$ sufficiently large; and hence, Lemma \ref{difference-lapnumber}(a) implies that \eqref{negative-constant} holds.

We now turn to prove that $N_1=N_2$. Choose a sequence $t_n\to \infty$ such that $\Pi^{t_n}(u_2,g)\to (u_2,g)$ as $n\to \infty$. Without loss of generality, we assume that $\Pi^{t_n}(u_1,g)\to (\bar u_1,g)$. By Lemma \ref{sequence-limit}, there is an integer $N>0$ satisfying that
\begin{equation}\label{constant-1}
  z(\varphi(t,\cdot;\bar u_1,g)-\varphi(t,\cdot;u_2,g))=N, \quad \forall t\in\mathbb{R}.
\end{equation}
Since $(u_1,g)$, $(\bar u_1,g)\in M_1\cap p^{-1}(g)$, it then follows from \cite[Theorem 4.2]{SWZ} that there are $(u^*_1,g^*)\in M_1\cap p^{-1}(g)$ and sequence $t^*_n\to\infty$ ($s^*_n\to \infty$) as $n\to \infty$ such that $\Pi^{t^*_n}(u_1,g)\to (u^*_1,g^*)$ and $\Pi^{t^*_n}(\bar u_1,g)\to (u^*_1,g^*)$($\Pi^{-s^*_n}(u_1,g)\to (u^*_1,g^*)$ and $\Pi^{-s^*_n}(\bar u_1,g)\to (u^*_1,g^*)$) as $n\to \infty$. By \eqref{negative-constant}-\eqref{constant-1} and the continuity of $z(\cdot)$, we get $N_1=N=N_2$. That is,
\begin{equation}\label{constant-2}
  z(\varphi(t,\cdot;u_1,g)-\varphi(t,\cdot;u_2,g))=N,\quad \forall\ |t|\ge T.
\end{equation}
By Lemma \ref{difference-lapnumber}(a), we have proved our claim.

Finally, we show that $N$ is actually independent of $g\in H(f)$ and $(u_i,g)\in M_i\cap p^{-1}(g)$ ($i=1,2$). Indeed, for any $g\in H(f)$ and any $(u_i,g)$, $(\hat u_i,g)\in M_i\cap p^{-1}(g)$($i=1,2$), By the claim above, there are $N_1,N_2\in\mathbb{N}$ such that
\begin{equation*}
  z(\varphi(t,\cdot;u_1,g)-\varphi(t,\cdot;u_2,g))=N_1,\quad \text{for all}\ t\in \mathbb{R},
\end{equation*}
and
\begin{equation*}
  z(\varphi(t,\cdot;\hat u_1,g)-\varphi(t,\cdot;\hat u_2,g))=N_2,\quad \text{for all}\ t\in \mathbb{R}.
\end{equation*}
It again follows from \cite[Theorem 4.2]{SWZ} that $(u_i,g)$, $(\hat u_i,g)$ forms a two sided proximal pair. Thus, by the continuity of $z(\cdot)$, one has
\begin{eqnarray*}
\begin{split}
  N_1=z(\varphi(t,\cdot; u_1,g)-\varphi(t,\cdot;u_2,g))&=z(\varphi(t,\cdot; u_1,g)-\varphi(t,\cdot;\hat u_2,g))\\ &=z(\varphi(t,\cdot;\hat u_1,g)-\varphi(t,\cdot;\hat u_2,g))=N_2.
\end{split}
\end{eqnarray*}
Moreover, for any $g,\hat g\in H(f)$ and $(u_i,g)\in M_i\cap p^{-1}(g)$, $(\hat u_i,\hat g)\in M_i\cap p^{-1}(\hat g)$($i=1,2$). Again, one can choose a sequence  $t_n\to -\infty$ and $(\bar u_ 2, \hat g)\in M_2\cap p^{-1}(\hat g)$ such that $\Pi^{t_n}(u_1,g)\to (\hat u_1,\hat g)$ and $\Pi^{t_n}(u_2,g)\to (\bar u_2,\hat g)$ as $n\to \infty$. Similarly as the arguments in \eqref{limit-const}-\eqref{asympto}, we have
\begin{eqnarray*}
\begin{split}
  N=z(\varphi(t,\cdot; u_1,g)-\varphi(t,\cdot;u_2,g))=z(\varphi(t,\cdot;\hat u_1,\hat g)-\varphi(t,\cdot;\hat u_2,\hat g)),
\end{split}
\end{eqnarray*}
for all $t\in \mathbb{R}$. Thus, we have proved that $N$ is independent of $g\in H(f)$ and $(u_i,g)\in M_i\cap p^{-1}(g)$($i=1,2$), which completes the proof of the lemma.
\end{proof}

\begin{lemma}\label{separated}
 Let $\omega(u_0,g_0)\subset X\times H(f)$ be in Theorem \ref{structure} and $M_1,M_2\subset \omega(u_0,g_0)$ be two minimal sets. Define
 \begin{eqnarray}
 \begin{split}
   m_i(g):=\min\{u(x_0)|(u,g)\in M_i\cap p^{-1}(g)\},\\
   M_i(g):=\max\{u(x_0)|(u,g)\in M_i\cap p^{-1}(g)\}
 \end{split}
 \end{eqnarray}
 for $i=1,2$. Then $M_1$, $M_2$ are separated in the following sense:
 \begin{itemize}
   \item[{ \rm (i)}] $[m_1(g),M_1(g)]\cap[m_2(g),M_2(g)]=\emptyset$ for all $g\in H(f)$;
   \item[{ \rm (ii)}] If $m_2(\tilde g)>M_1(\tilde g)$ for some $\tilde g\in H(f)$, then there exists $\delta>0$ such that  $m_2(g)>M_1(g)+\delta$ for all $g\in H(f)$.
 \end{itemize}
\end{lemma}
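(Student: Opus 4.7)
The plan is to first establish a ``non-collision'' of the values $u(x_0)$ between $M_1$ and $M_2$ on each fiber, then upgrade this pointwise fact to a uniform sign across all of $H(f)$, and finally read off both (i) and (ii) from that uniform sign. Throughout, $x_0 \in S^1$ denotes the distinguished point from Proposition \ref{order} at which every element of $\omega(u_0,g_0)$ has vanishing $x$-derivative.

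\textbf{Step 1 (fiberwise non-collision).} I will first show that for every $g \in H(f)$ and every $u_i \in M_i \cap p^{-1}(g)$ ($i=1,2$), one has $u_1(x_0) \ne u_2(x_0)$. Since $M_1 \cap M_2 = \emptyset$, the difference $u_1 - u_2$ is not identically zero, and Proposition \ref{order} gives $(u_1 - u_2)_x(x_0) = 0$. Were the equality $u_1(x_0) = u_2(x_0)$ to also hold, $x_0$ would be a multiple zero of $u_1 - u_2$ at $t = 0$. But Lemma \ref{constant-on-twosets} asserts that $z(\varphi(t,\cdot;u_1,g) - \varphi(t,\cdot;u_2,g)) \equiv N$ for every $t \in \mathbb{R}$, and Lemma \ref{zero-number}(c) forces simple zeros for all large $t$; Lemma \ref{zero-number}(b) would then make $z$ strictly drop at $t = 0$, contradicting constancy.

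\textbf{Step 2 (uniform sign).} Next I would show that $\sgn(u_1(x_0) - u_2(x_0))$ is the same for every such pair and every $g$. Each $M_i$ is an almost $1$-cover of the minimal distal hull $H(f)$ by \cite{SWZ}, so the classical almost automorphic extension theory makes $Y_i := \{g \in H(f) : |M_i \cap p^{-1}(g)| = 1\}$ residual in $H(f)$; pick $g^* \in Y_1 \cap Y_2$ and write $M_i \cap p^{-1}(g^*) = \{u_i^*\}$. Given any pair $(u_1,g), (u_2,g)$ as above, minimality of $H(f)$ provides $s_n \to +\infty$ with $g \cdot s_n \to g^*$; after passing to a common subsequence, compactness and the singleton fibers force $\varphi(s_n,\cdot;u_i,g) \to u_i^*$ for both $i=1,2$. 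By Step 1 applied along the trajectory, the continuous scalar function $t \mapsto \varphi(t,x_0;u_1,g) - \varphi(t,x_0;u_2,g)$ never vanishes, hence has constant sign; letting $n \to \infty$ gives $\sgn(u_1(x_0) - u_2(x_0)) = \sgn(u_1^*(x_0) - u_2^*(x_0))$, a quantity independent of the chosen pair.

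\textbf{Step 3 (conclusion).} Assuming, after swapping $M_1$ and $M_2$ if necessary, that the common sign is $u_2(x_0) > u_1(x_0)$, I immediately obtain $m_2(g) > M_1(g)$ for every $g$, establishing (i). For (ii), the hypothesis $m_2(\tilde g) > M_1(\tilde g)$ selects precisely this orientation of the common sign, so $m_2(g) > M_1(g)$ for every $g \in H(f)$. Upper-semicontinuity of the set-valued fiber map $g \mapsto M_i \cap p^{-1}(g)$ makes $m_2$ lower-semicontinuous and $M_1$ upper-semicontinuous; therefore $h(g) := m_2(g) - M_1(g)$ is lower-semicontinuous and strictly positive on the compact space $H(f)$ and attains a positive minimum, and any positive $\delta$ smaller than this minimum works.

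The hard part is clearly Step 2: turning the pointwise non-collision of Step 1 into a globally uniform sign. The residual abundance of common singleton fibers, coming from the almost automorphic extension theory over the minimal distal base $H(f)$, is what anchors both minimal sets at a single $g^*$ and lets the sign propagate; without it, one would need a delicate iterative synchronization of the two almost $1$-cover minimal sets via the proximal-pair property of \cite[Theorem 4.2]{SWZ}, a ``ping-pong'' argument with no obvious termination.
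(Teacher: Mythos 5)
Your proof is correct, and it rests on the same essential mechanism as the paper's: Proposition \ref{order} combined with Lemma \ref{constant-on-twosets} and Lemma \ref{difference-lapnumber}(b) rules out a multiple zero of $u_1-u_2$ at $x_0$, so the fiber values at $x_0$ never collide and $t\mapsto\varphi(t,x_0;u_1,g)-\varphi(t,x_0;u_2,g)$ keeps a constant sign along orbits. Where you genuinely diverge is in upgrading this to uniform separation. The paper never invokes singleton fibers: it runs two successive contradiction arguments with extremal representatives (a $u_1$ realizing $m_1(g)$ against a $u_2$ realizing $M_2(g)$, pushed forward by minimality to a configuration realizing $M_1(g^*)$), first producing disjointness of the intervals for some $\tilde g$ and then for all $g$; part (ii) is then a direct compactness argument, equivalent to your semicontinuity formulation. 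You instead anchor both minimal sets at a single $g^*$ where both fibers are singletons, which exists because the singleton-fiber set of an almost $1$-cover is an invariant dense $G_\delta$ (upper semicontinuity of the fiber diameter plus minimality of $H(f)$), hence residual, so the two residual sets meet. Your route is cleaner in that the uniform sign drops out in one step, at the price of importing the residuality fact from almost automorphic extension theory, which the paper uses elsewhere but deliberately avoids here; the paper's route is self-contained, needing only minimality of $M_1$, $M_2$ and the extremal-value functions. One point worth making explicit in your Step 2: the reason the limit $u_1^*(x_0)-u_2^*(x_0)$ is nonzero, so that the sign survives the passage to the limit, is Step 1 applied to the pair $(u_1^*,g^*)$, $(u_2^*,g^*)$ --- you use this but do not say it.
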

\begin{proof}
We first claim that (i) holds for some $\tilde g\in H(f)$. Otherwise, one has $m_2(g)\le M_1(g)$ and $m_1(g)\le M_2(g)$ for all $g\in H(f)$. Given $g,g^*\in H(f)$, let $(u_1,g)\in M_1\cap p^{-1}(g)$ be such that $u_1(x_0)=m_1(g)$ and $(u_2,g)\in M_2\cap p^{-1}(g)$ be such that $u_2(x_0)=M_2(g)$. It follows from Lemma \ref{constant-on-twosets} that $z(\varphi(t,\cdot;u_1,g)-\varphi(t,\cdot;u_2,g))\equiv constant$, for all $t\in \mathbb{R}$. By Lemma \ref{difference-lapnumber}(b), we have $\varphi(t,\cdot;u_1,g)-\varphi(t,\cdot;u_2,g)(x_0)\neq 0$ for all $t\in \mathbb{R}$. Note that $u_1(x_0)=m_1(g)\le M_2(g)=u_2(x_0)$. Then, one has
\begin{equation}\label{smaller}
  \varphi(t,\cdot;u_1,g)(x_0)<\varphi(t,\cdot;u_2,g)(x_0),\quad \text{for all}\ t\in\mathbb{R}.
\end{equation}
By the minimality of $M_1$, there is a sequence $t_n\to \infty$, such that $\Pi^{t_n}(u_1,g)\to (u^*_1,g^*)$ as $n\to \infty$, where $(u^*_1,g^*)\in M_1$ with $u^*_1(x_0)=M_1(g^*)$. Without loss of generality, we can also assume that $\Pi^{t_n}(u_2,g)\to (u^*_2,g^*)$ as $n\to \infty$. By \eqref{smaller}, $M_1(g^*)=u^*_1(x_0)\leq u^*_2(x_0)\le M_2(g^*)$. Moreover, it again follows from Lemma \ref{constant-on-twosets} that $M_1(g^*)\neq M_2(g^*)$, which implies that $M_1(g^*)<M_2(g^*)$ (Otherwise, $x_0$ is a zero of $u_1^*-u_2^*$  with multiplicity two) so by Lemma \ref{difference-lapnumber}(b) $z(\varphi(t,\cdot;u^*_1,g^*)-\varphi(t,\cdot;u^*_1,g^*))$ drops at $t=0$, a contradiction). However, similarly as above, one can also obtain that $M_2(g^*)<M_1(g^*)$, a contradiction.

Without loss of generality, we assume that $M_1(\tilde g)<m_2(\tilde g)$ for some $\tilde g\in H(f)$. Then we will show that $M_1(g)<m_2(g)$ for all $g\in H(f)$. Suppose that there is a $g^*\in H(f)$ such that $M_1(g^*)\ge m_2(g^*)$. Let $(u^*_2,g^*)\in M_2\cap p^{-1}(g^*)$ with $u^*_2(x_0)=m_2(g^*)$ and choose $(u^*_1,g^*)\in M_1\cap p^{-1}(g^*)$ with $u^*_1(x_0)=M_1(g^*)$. By the minimality of $M_2$, we can find a sequence $\{t_n\}$, $t_n\to \infty$, such that $\Pi^{t_n}(u^*_2,g^*)\to (u^{**}_2,\tilde g)$ as $n\to \infty$ and $u^{**}_2(x_0)=m_2(\tilde g)$. Without loss of generality, one may also assume that $\Pi^{t_n}(u^*_1,g^*)\to (u^{**}_1,\tilde g)$ as $n\to \infty$. By the same arguments as in the previous paragraph, one has
\begin{equation*}
  m_2(\tilde g)=u^{**}_2(x_0)\le u^{**}_1(x_0)\le M_1(\tilde g),
\end{equation*}
contradicting our assumption. Therefore, $m_2(g)>M_1(g)$ for all $g\in H(f)$. We have proved (i).

(ii) By (i), it is clear that $m_2(g)>M_1(g)$ for all $g\in H(f)$. Suppose that there is sequence  $m_2(g_n)>M_1(g_n)$ satisfying that $m_2(g_n)-M_1(g_n)\to 0$ as $n\to \infty$. Then, we may assume that $g_n\to g^*\in H(f)$, $m_2(g_n)\to c$ and $M_1(g_n)\to c$ as $n\to\infty$, for some $c\in\mathbb{R}$. Since $M_i$ ($i=1,2$) are compact, $c\in[m_1(g^*),M_1(g^*)]\cap[m_2(g^*),M_2(g^*)]$, a contradiction.
\end{proof}

\begin{proof}[Proof of Theorem \ref{structure}]
  Suppose that $\omega(u_0,g_0)$ contains three minimal sets $M_i$ ($i=1,2,3$). Define
\begin{equation*}
  A_i(g)=\{u(x_0)|(u,g)\in M_i\cap p^{-1}(g)\}
\end{equation*}
and
\begin{equation*}
  m_i(g)=\min A_i(g),\quad M_i(g)=\max A_i(g),
\end{equation*}
for all $g\in H(f)$ and $i=1,2,3$. By virtue of Lemma \ref{separated}(ii), we may assume without loss of generality that there is a $\delta>0$ such that
\begin{equation}\label{inequality}
  M_1(g)+\delta\leq m_2(g)\le M_2(g)<M_2(g)+\delta\le m_3(g),
\end{equation}
for all $g\in H(f)$.

Now choose $(u_i,g_0)\in M_i\cap p^{-1}(g_0)$, $i=1,2,3$, and consider $(u_0,g_0)$ and $(u_2,g_0)$. It follows from Lemma \ref{difference-lapnumber} that there is a $T>0$ such that $z(\varphi(t,\cdot;u_0,g_0)-\varphi(t,\cdot;u_2,g_0))=constant$, for all $t\ge T$. By Lemma \ref{difference-lapnumber}(b), we can assume without loss of generality that $\varphi(t,\cdot;u_0,g_0)(x_0)<\varphi(t,\cdot;u_2,g_0)(x_0)$ for all $t\ge T$. Since $M_3\subset \omega(u_0,g_0)$, there is a sequence $\{t_n\}$, $t_n\to \infty$, such that $\varphi(t_n,\cdot;u_0,g_0)(x_0)\to m_3(g^*)$ as $n\to \infty$. Let $\varphi(t_n,\cdot;u_2,g_0)(x_0)\to \beta(g^*)$ with $\beta(g^*)\in [m_2(g^*),M_2(g^*)]$. Consequently,
\[
m_3(g^*)\leq \beta(g^*)\le M_2(g^*),
\]
contradicting \eqref{inequality}. Thus, $\omega(u_0,g_0)$ contains at most two minimal sets. Let $\omega(u_0,g_0)=M_1\cup M_2\cup M_{12}$, where $M_1$, $M_2$ are minimal sets. If $M_1\neq M_2$, since $\omega(u_0,g_0)$ is connected, $M_{12}\neq \emptyset$. Choose $(u_{12},g)\in M_{12}$, it is clear that $\omega(u_{12},g)\cap (M_1\cup M_2)$ and $\alpha(u_{12},g)\cap (M_1\cup M_2)$ are not empty, for otherwise, either $\omega(u_{12},g)$ or $\alpha(u_{12},g)$ would contain a minimal set and therefore $\omega(u_0,g_0)$ would have three minimal sets. For the case $\omega(u_0,g_0)$ contains only one minimal set, that is, $M_1=M_2$. If $M_{12}\neq \emptyset$, then a similar argument shows that $\omega(u_{12},g)\cap M_1\neq \emptyset$, $\alpha(u_{12},g)\cap M_1\neq \emptyset$ for any $(u_{12},g)\in M_{12}$. Thus, we have completed our proof.
\end{proof}

\section{Hyperbolic $\omega$-limit sets}

By virtue of Theorem \ref{structure}, we will focus on structure of the hyperbolic $\omega$-limit sets of equation \eqref{equation-lim2}. To state the main result in this section, we need to further introduce some additional notations on the invariant manifolds associated with \eqref{equation-lim1}-\eqref{equation-lim2}.

Let $E\subset X\times H(f)$ be a connected and compact invariant set of \eqref{equation-lim2}, and $\varphi(t,\cdot;u_0,g)$ be the solution of \eqref{equation-lim1} with $\varphi(0,\cdot,u_0,g)=u_0(\cdot)$.
For any $\omega=(u_0,g)\in E$, we write $\omega\cdot t=\Pi^t(u_0,g)$. Consider the transformation $v=u-\varphi(t,\cdot;u_0,g)$ in \eqref{equation-lim1}. It turns out that the new variable $v$ satisfies the following equation:
\begin{equation}\label{variation-equation}
v_t=v_{xx}+a(x,\omega\cdot t)v_x+b(x,\omega\cdot t)v+F(v,\omega\cdot t),\,\,t>0,\,x\in S^{1}=\mathbb{R}/2\pi \mathbb{Z},
\end{equation}
where
$$F(v,\omega)=g(0,v+u_0,v_x+(u_0)_x)-g(0,u_0,(u_0)_x)-a(x,\omega)v_x-b(x,\omega)v,
$$
 $a(x,\omega)=g_p(0,u_0,(u_0)_x)$ (here $g_p(\cdot,\cdot,p)$ is the derivative of $g$ with respect to $p$), $b(x,\omega)=g_u(0,u_0,(u_0)_x)$.

Denote $A(\omega)=\frac{\partial^2}{\partial x^2}+a(\cdot,\omega)\frac{\p }{\p x}+b(\cdot,\omega)$. Then \eqref{variation-equation} can be rewritten as
\begin{equation}\label{linear-opera2}
v'=A(\omega\cdot t)v+F(v,\omega\cdot t).
\end{equation}
 Let $\sigma(E)=\cup_{k=0}^{\infty}I_k$ ($I_k$ is ordered from right to left) be the Sacker-Sell spectrum of the linear equation associated with \eqref{linear-opera2}:
\begin{equation}\label{associate-eq}
v'=A(\omega\cdot t)v,\,\,t>0,\,\omega\in E,\,v\in X.
\end{equation}
For any given $0\leq n_1\leq n_2\leq \infty$ ($n_1\not =n_2$ when $n_2=\infty$), let $V^{n_1,n_2}(\omega)$ be the invariant subspace of \eqref{associate-eq} associated with the spectrum set $\cup_{k=n_1}^{n_2}I_k$ at $\omega\in E$. Moreover, by Lemma \ref{invari-mani}, there is a well-defined local invariant manifold $W^{n_1,n_2}(\omega,\delta^*)$ of \eqref{linear-opera2}. Let
\begin{equation}\label{gener-inva}
M^{n_1,n_2}(\omega,\delta^*)=\{u\in X|u-u_0\in W^{n_1,n_2}(\omega,\delta^*)\}.
\end{equation}
$M^{n_1,n_2}(\omega,\delta^*)$ is then referred as a {\it local invariant manifold} of \eqref{equation-lim2} at $(u_0,g)$.\par

Assume $0\in \sigma(E)$ and $n_0$ is such that $I_{n_0}=[a_{n_0},b_{n_0}]\subset \sigma(E)$ with $a_{n_0}\leq 0\leq b_{n_0}$. Let  $V^s(\omega)$, $V^{cs}(\omega)$, $V^{c}(\omega)$, $V^{cu}(\omega)$, and $V^u(\omega)$  be the
 {\it stable, center stable, center, center unstable}, and {\it unstable subspaces} of \eqref{associate-eq} at $\omega\in E$, respectively. So, $W^s(\omega,\delta^*)$, $W^{cs}(\omega,\delta^*)$, $W^c(\omega,\delta^*)$, $W^{cu}(\omega,\delta^*)$ and $W^u(\omega,\delta^*)$ are well defined as in subsection \ref{invari-mani0}.  By virtue of \eqref{gener-inva}, we can define
$$M^{l}(\omega,\delta^*)=\{u\in X|u-u_0\in W^l(\omega,\delta^*)\}$$ for $l=s,cs,c,cu$ and $u$.
Then $M^s(\omega,\delta^*)$, $M^{cs}(\omega,\delta^*)$, $M^{c}(\omega,\delta^*)$, $M^{cu}(\omega,\delta^*)$ and $M^{u}(\omega,\delta^*)$ are continuous in $\omega\in E$ and referred as {\it local stable, center stable, center, center unstable}, and {\it unstable manifolds} of \eqref{equation-lim2} at $\omega=(u_0,g)\in E$, respectively.\par

Let $\omega=(u,g)\in E$, then the following Remark is directly from Remark \ref{overflow-invariant}.
\begin{remark}\label{stable-leaf}
{\rm
(1) $M^s(\omega,\delta^*)$ and $M^u(\omega,\delta^*)$ are overflowing invariant in the sense that if $\delta^*$ is sufficiently small, then
\[
\varphi(t,M^s(\omega,\delta^*),g)\subset M^s(\omega\cdot t,\delta^*),
\]
for $t$ sufficiently positive, and
\[
\varphi(t,M^u(\omega,\delta^*),g)\subset M^u(\omega\cdot t,\delta^*),
\]
for $t$ sufficiently negative.  $M^s(\omega,\delta^*)$ and $M^u(\omega,\delta^*)$  are unique and have the following characterizations:
there are $\delta_1^*,\delta_2^*>0$ such that
\begin{align*}
&\{v\in X\, :\, \|\varphi(t,\cdot;v,g)-\varphi(t,\cdot;u,g)\|\le \delta_1^*\,\,{\rm for}\,\, t\ge 0\,\textnormal{ and }\varphi(t,\cdot;v,g)-\varphi(t,\cdot;u,g)\to 0\,\\ &\textnormal{exponentially as}\,\, t\to\infty\}\\
&\subset
M^s(\omega,\delta^*)\subset \{v\in X\, :\, \|v-u\|\le \delta_2^*,\,\, \|\varphi(t,\cdot;v,g)-\varphi(t,\cdot;u,g)\|\to 0\,\, {\rm as}\,\, t\to\infty\}
\end{align*}
and
\begin{align*}
&\{v\in X\, :\,\textnormal{the backward orbit } \varphi(t,\cdot;v,g) \textnormal{ exists and } \|\varphi(t,\cdot;v,g)-\varphi(t,\cdot;u,g)\|\le \delta_1^*\,\,{\rm for}\,\, t\le 0,\\
&\textnormal{ further, }\varphi(t,\cdot;v,g)-\varphi(t,\cdot;u,g)\to 0\,\, \textnormal{exponentially as}\,\, t\to -\infty\}\\
&\subset
M^u(\omega,\delta^*)\subset \{v\in X\, :\, \|v-u\|\le \delta_2^*,\, \|\varphi(t,\cdot;v,g)-\varphi(t,\cdot;u,g)\|\to 0\,\, {\rm as}\,\, t\to -\infty\}.
\end{align*}

Moreover, one can find constants $\alpha$, $C>0$, such that for any $\omega\in E$, $v^s\in M^s(\omega,\delta^*)$, $v^u\in M^u(\omega,\delta^*)$,
\begin{equation}\label{exponen-decrea}
\begin{split}
  \|\varphi(t,\cdot;v^s,g)-\varphi(t,\cdot;u,g)\|&\leq Ce^{-\frac{\alpha}{2}t}\|v^s-u\|\quad \text{for}\ t\geq 0,\\
  \|\varphi(t,\cdot;v^u,g)-\varphi(t,\cdot;u,g)\|&\leq Ce^{\frac{\alpha}{2}t}\|v^u-u\|\quad \text{for}\ t\leq 0.
\end{split}
\end{equation}

\vskip 3mm
(2) $M^{cs}(\omega,\delta^*)$ (choose $\delta^*$ smaller if necessary) has a repulsion property in the sense that if $\norm{v-u}<\delta^*$ but $v\notin M^{cs}(\omega,\delta^*)$, then there is $T>0$ such that $\norm{\varphi(T,\cdot;v,g)-\varphi(T,\cdot;u,g)}\ge \delta^*$. Consequently, if $\norm{\varphi(t,\cdot;v,g)-\varphi(t,\cdot;u,g)}<\delta^*$ for all $t\ge 0$ then one may conclude that $v\in M^{cs}(\omega,\delta^*)$. Note that $M^{cs}(\omega,\delta^*)$ is not unique in general.

\vskip 3mm

(3) $M^{cu}(\omega,\delta^*)$ has an attracting property  in the sense that if $\|\varphi(t,\cdot;v,g)-\varphi(t,\cdot;u,g)\|<\delta^*$ for all $t\ge 0$, then $v^*\in M^{cu}(\omega^*,\delta^*)$ whenever $(\varphi(t_n,\cdot;v,g),\omega\cdot t_n)\to (v^*,\omega^*)$ and with some $t_n\to \infty$. Moreover, one can choose $\delta^*$ smaller such that, if  $\norm{v-u}<\delta^*$ with a unique backward orbit $\varphi(t,\cdot;v,g)(t\le 0)$ but $v\notin M^{cu}(\omega,\delta^*)$, then there is $T<0$ such that $\norm{\varphi(t,\cdot;v,g)-\varphi(t,\cdot;u,g)}\ge \delta^*$. As a consequence, if $v$ has a unique backward orbit $\varphi(t,\cdot;v,g)(t\le 0)$ with $\norm{\varphi(t,\cdot;v,g)-\varphi(t,\cdot;u,g)}<\delta^*$ for all $t\le 0$, then one may conclude that $v\in M^{cu}(\omega,\delta^*)$. Note that $M^{cu}(\omega,\delta^*)$ is not unique in general.

\vskip 3mm
(4) For any $\omega\in E$, we have
\[
 M^{cs}(\omega,\delta^*)={\cup}_{u_c\in M^c(\omega,\delta^*)}\bar{M}_s(u_c,\omega,\delta^*)\ ({\rm resp. }\, \ M^{cu}(\omega,\delta^*)={\cup}_{u_c\in M^c(\omega,\delta^*)}\bar{M}_u(u_c,\omega,\delta^*)),
\]
where $\bar{M}_s(u_c,\omega,\delta^*)$ (resp. $\bar{M}_u(u_c,\omega,\delta^*)$) is the so-called {\it stable leaf} (resp. {\it unstable leaf}) of \eqref{lipschitz} at $u_c$. It is invariant in the sense that if $\tau>0$ (resp. $\tau<0$) is such that $\varphi(t,\cdot;u_c,g)\in M^{c}(\omega\cdot t,\delta^*)$ and $\varphi(t,\cdot;v,g)\in M^{cs}(\omega,\delta^*)$ (resp. $\varphi(t,\cdot;v,g)\in M^{cu}(\omega,\delta^*)$) for all $0\leq t<\tau$ (resp. $\tau<t\leq 0$), where $v\in \bar{M}_s(u_c,\omega,\delta^*)$ (resp. $v\in \bar{M}_u(u_c,\omega,\delta^*)$), then $\varphi(t,\cdot;v,g)\in\bar{M}_s(\varphi(t,\cdot;u_c,g),\omega\cdot t,\delta^*)$ (resp. $\varphi(t,\cdot;v,g)\in\bar{M}_u(\varphi(t,\cdot;u_c,g),\omega\cdot t,\delta^*)$) for $0\leq t<\tau$ (resp. $\tau<t\leq 0$). Moreover, there are $K,\beta >0$ such that for any $u\in\bar{M}_s(u_c,\omega,\delta^*)$ (resp. $u\in\bar{M}_u(u_c,\omega,\delta^*)$) and $\tau>0$ (resp. $\tau<0$) with $\varphi(t,\cdot;v,g)\in M^{cs}(\omega\cdot t,\delta^*)$ (resp. $\varphi(t,\cdot;v,g)\in M^{cu}(\omega\cdot t,\delta^*)$), $\varphi(t,\cdot;u_c,g)\in M^{c}(\omega\cdot t,\delta^*)$ for $0\leq t<\tau$ (resp. $\tau<t\leq 0$), one has that
\begin{equation*}
\begin{split}
\|\varphi(t,\cdot;v,g)-\varphi(t,\cdot;u_c,g)\|&\le  Ke^{-\beta t}\|v-u_c\|\\
(\mathrm{resp}. \ \|\varphi(t,\cdot;v,g)-\varphi(t,\cdot;u_c,g)\|&\le  Ke^{\beta t}\|v-u_c\|)
\end{split}
\end{equation*}
for $0\leq t<\tau$ (resp. $\tau< t\leq 0$).

}
\end{remark}

\begin{remark}\label{invari-space}
{\rm
For any minimal set $M\subset E$, one has $\sigma(M)\subset \sigma(E)$ and $\dim V^u(M)\geq\dim V^u(E)$, $\dim V^c(M)\leq\dim V^c(E)$ and $\mathrm{codim}V^s(M)\leq \mathrm{codim}V^s(E)$ (here $V^u(M)$, $V^c(M)$ and $V^s(M)$ are {\it stable space, center space and unstable space} of the linearized variational equation  \eqref{associate-eq} on $M$).}
\end{remark}

 A subset $A\subset X$ is called {\it spatially-homogeneous} (resp. {\it spatially-inhomogeneous}) if any point in $A$ is spatially-homogeneous (resp. spatially-inhomogeneous). In particular, it deserves to point out that if $A$ is minimal, then $A$ is either spatially-inhomogeneous; or otherwise, $A$ is spatially-homogeneous.

Our main result in this section is the following

\begin{theorem}\label{hyperbolic0}
  Assume that $f(t,u,u_x)=f(t,u,-u_x)$.
   Let $\Omega=\omega(u_0,g_0)$ be an $\omega$-limit set of \eqref{equation-lim2} with $\dim V^c(\Omega)=0$. Then $\omega(u_0,g_0)$ is a spatially-homogeneous $1$-cover of $H(f)$.
\end{theorem}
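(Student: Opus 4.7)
The plan is to proceed by contradiction, combining Theorem~\ref{structure} with the hyperbolic minimal-set result of \cite{SWZ} and a Fourier-mode zero-number analysis. First, by Remark~\ref{invari-space}, any minimal subset $M\subset\Omega$ has $\dim V^c(M)\le\dim V^c(\Omega)=0$, hence is hyperbolic; by \cite{SWZ}, $M$ is a spatially-homogeneous $1$-cover $\{(\phi(g),g):g\in H(f)\}$. Also, Proposition~\ref{order} and Lemma~\ref{refle-prop}(ii) furnish an $x_0\in S^1$ such that every $u\in\Omega$ is reflection-symmetric about $x_0$. Along any such $M$ the reflection symmetry $f(t,u,p)=f(t,u,-p)$ forces $f_p(t,\phi(g),0)\equiv 0$, so the variational equation reads $v_t=v_{xx}+b(g\cdot t)v$, which diagonalizes in Fourier modes $\cos(nx),\sin(nx)$, with $n$-th mode having single-point Sacker--Sell spectrum $\{\bar b-n^2\}$, $\bar b$ being the $H(f)$-mean of $b(g)=f_u(0,\phi(g),0)$. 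Hyperbolicity requires $\bar b\ne n^2$ for every $n\ge 0$; setting $n_u:=\lfloor\sqrt{\bar b}\rfloor$ and $n_s:=n_u+1$ (with $n_u=-1$ if $\bar b<0$), one has $\dim V^u(M)=1+2n_u$ when $\bar b>0$ and $0$ otherwise.

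By Theorem~\ref{structure} it suffices to rule out cases (ii) and (iii). In Case (ii), take $(u_{11},g_*)\in M_{11}$ and set $w(t,x)=\varphi(t,x;u_{11},g_*)-\phi_1(g_*\cdot t)$. Since $\alpha(u_{11},g_*)=\omega(u_{11},g_*)=M_1$ and $M_1$ is a $1$-cover, $\|w(t,\cdot)\|\to 0$ as $t\to\pm\infty$. If $\bar b_1<0$ the local unstable manifold at $(\phi_1(g_*),g_*)$ is trivial (Remark~\ref{stable-leaf}(1)), forcing $u_{11}=\phi_1(g_*)$, a contradiction. So $\bar b_1>0$. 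Using Remark~\ref{stable-leaf}(1) with the graph description of $W^u,W^s$ over $V^u,V^s$ together with the Fourier diagonalization, $w(t,\cdot)/\|w(t,\cdot)\|$ converges as $t\to-\infty$ to an eigenfunction of the slowest-decaying unstable mode; by the $x_0$-symmetry of $w$ this is a cosine $\cos(k(x-x_0))$ with $k\le n_u^{(1)}$, so Lemma~\ref{zero-cons-local} gives $z(w(t,\cdot))=2k$ eventually. Symmetrically, as $t\to+\infty$, $z(w(t,\cdot))=2k'$ with $k'\ge n_s^{(1)}=n_u^{(1)}+1$. Monotonicity of $z$ in $t$ (Lemma~\ref{difference-lapnumber}(a)) then forces $2n_u^{(1)}\ge 2k\ge 2k'\ge 2(n_u^{(1)}+1)$, a contradiction.

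In Case (iii), WLOG $\phi_1<\phi_2$ by Lemma~\ref{separated}. The hyperbolic repulsion/attraction in Remark~\ref{stable-leaf}(2)--(3) prevents any $u_{12}\in M_{12}$ from entering arbitrarily small neighborhoods of both $M_1$ and $M_2$ infinitely often, so each $(u_{12},g_*)\in M_{12}$ is a genuine heteroclinic; WLOG $\alpha(u_{12},g_*)=M_1$ and $\omega(u_{12},g_*)=M_2$. Apply the above asymptotic to $w_2(t,x):=\varphi(t,x;u_{12},g_*)-\phi_2(g_*\cdot t)$: as $t\to-\infty$, $w_2\to\phi_1-\phi_2<0$ (constant), giving $z(w_2(t,\cdot))=0$ eventually; as $t\to+\infty$, $w_2\to 0$ along the stable manifold of $M_2$, forcing $z(w_2(t,\cdot))\ge 2n_s^{(2)}$. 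Monotonicity yields $n_s^{(2)}=0$, hence $\bar b_2<0$ and $\dim V^u(M_2)=0$. Meanwhile $u_{12}\in W^u(M_1)\setminus M_1$ forces $\dim V^u(M_1)\ge 1$. However, hyperbolicity of $\Omega$ makes $V^u(\Omega)$ a continuous vector sub-bundle of $X\times\Omega$ of constant fiber dimension, and since the $\Omega$-flow restricts to the $M_i$-flow on $M_i$, Lyapunov exponents there are intrinsic, giving $V^u(\Omega)|_\omega=V^u(M_i)|_\omega$ for $\omega\in M_i$. Thus $\dim V^u(M_1)=\dim V^u(M_2)$, contradicting $\dim V^u(M_1)\ge 1>0=\dim V^u(M_2)$. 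Hence $\Omega$ is minimal and, by the first step, a spatially-homogeneous $1$-cover of $H(f)$.

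The principal technical hurdle is the asymptotic zero-number claim underpinning both cases: rigorously showing that $w(t,\cdot)/\|w(t,\cdot)\|$ converges, as $t\to\pm\infty$, to a single Fourier cosine with the predicted index, thereby fixing $z(w(t,\cdot))$ to the specific even value $2k$ (resp.~$2k'$). This demands careful invariant-manifold graph coordinates from Remark~\ref{stable-leaf}, the Fourier separation afforded by $x_0$-reflection symmetry, and local constancy of the zero-number function (Lemma~\ref{zero-cons-local}); a secondary delicate point is excluding the oscillatory possibility $\omega(u_{12})\supseteq M_1\cup M_2$ in Case (iii), which relies on the sharp repulsion/attraction statements in Remark~\ref{stable-leaf}(2)--(3).
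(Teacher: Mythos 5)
Your overall skeleton matches the paper's: reduce to ruling out cases (ii) and (iii) of Theorem \ref{structure}, use that every minimal set in a hyperbolic $\Omega$ is a spatially-homogeneous $1$-cover, and derive a contradiction from the monotonicity of the zero number by comparing its value along the unstable side (as $t\to-\infty$) with its value along the stable side (as $t\to+\infty$). Your execution differs in that you diagonalize the variational equation explicitly in Fourier modes along the spatially-homogeneous minimal sets (using $f_p(t,u,0)=0$, which is a correct and pleasant simplification of Lemma \ref{spatial-hom}), whereas the paper packages the same zero-number bounds abstractly as Lemma \ref{zerocenter} (quoted from \cite{SWZ}) and funnels both cases through a single statement, Lemma \ref{hyperbolic2}: a set with $0\notin\sigma(E)$ contains no two-sided proximal pair. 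Your ``principal technical hurdle'' --- convergence of $w(t,\cdot)/\|w(t,\cdot)\|$ to a single cosine mode --- is actually more than you need and harder than what is available: Lemma \ref{zerocenter}(1) already gives $z\le N_u-2$ on $M^u\setminus\{u_0\}$ and $z\ge N_u$ on $M^s\setminus\{u_0\}$ directly, with no single-mode asymptotics, so that part of your argument can be closed without further work.

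The genuine gap is in Case (iii), at the step where you declare each $(u_{12},g_*)\in M_{12}$ to be ``a genuine heteroclinic'' with $\alpha(u_{12},g_*)=M_1$ and $\omega(u_{12},g_*)=M_2$. Theorem \ref{structure}(iii) explicitly allows $M_1\cup M_2\subset\omega(u_{12},g)$, and it also allows $\omega(u_{12},g)$ to contain points of $M_{12}$ even when it meets only one minimal set; in either situation $w_2(t,\cdot)$ need not tend to $0$ as $t\to+\infty$ and your asymptotic zero-number computation does not start. The repulsion/attraction statements of Remark \ref{stable-leaf}(2)--(3) do not close this: repulsion only says that an orbit near $M_1$ but off $M^{s}$ must \emph{leave} a $\delta^*$-neighborhood at some later time --- it does not forbid the orbit from returning, so oscillation between neighborhoods of $M_1$ and $M_2$ (or recurrence inside $M_{12}$) is not excluded by those remarks alone. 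The paper closes exactly this hole with Lemma \ref{hyperbolic2} (no two-sided proximal pairs when $0\notin\sigma(E)$), applied twice: first to force $\alpha(u_{12},g)\cap M_1=\emptyset$ and $\omega(u_{12},g)\cap M_2=\emptyset$, and then again inside the ``assertion'' to show that any limit point $(u_{12}^*,g^*)\in M_{12}$ of the forward orbit would itself be two-sided proximal to a point of $M_1$. You would need to prove an analogue of that lemma (your Case (ii) computation is essentially its proof in the special case where one member of the pair lies on a minimal set, so the ingredients are at hand), but as written the step is asserted rather than established. The remaining pieces of your Case (iii) --- $\dim V^u(M_2)=0$, $\dim V^u(M_1)\ge 1$, and $\dim V^u(M_1)=\dim V^u(M_2)=\dim V^u(\Omega)$ when $\dim V^c(\Omega)=0$ --- are correct (the last equality is exactly what the paper extracts from Remark \ref{invari-space} in Lemma \ref{centerodd}) and give a valid, slightly different contradiction from the paper's (which instead contradicts $z(\varphi(t,\cdot;u_1,g)-\varphi(t,\cdot;u_2,g))=0$ for spatially-homogeneous $M_1,M_2$).
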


 We first show that Theorem \ref{hyperbolic0} holds for any minimal sets.

\begin{lemma}\label{hyper-minimal}
 Assume $M\subset X\times H(f)$ be a minimal set of \eqref{equation-lim2} with $\dim V^c(M)=0$, then $M$ is spatially-homogeneous and $1$-cover of $H(f)$.
\end{lemma}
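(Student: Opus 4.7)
The content of Lemma \ref{hyper-minimal} is essentially a restatement, for the minimal case, of results already established by the same authors in \cite{SWZ}. The plan is therefore to explain how to retrieve it from the structural theorems of \cite{SWZ} combined with the invariant manifold tools introduced in Section \ref{invari-mani0}. The argument splits cleanly into two independent assertions: that $M$ is a $1$-cover, and that every element of $M$ is spatially-homogeneous.

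For the $1$-cover assertion, by \cite{SWZ} any minimal set under the reflection symmetry $f(t,u,u_x)=f(t,u,-u_x)$ is already an almost $1$-cover of $H(f)$, and by \cite[Theorem 4.2]{SWZ} any two points of a common fiber $M\cap p^{-1}(g)$ form a two-sided proximal pair. Assuming for contradiction that $(u_1,g)\neq(u_2,g)$ in $M\cap p^{-1}(g)$, choose sequences $t_n\to\infty$ and $s_n\to-\infty$ along which the orbits of the two points come within $\delta^*$ of one another. Since $\dim V^c(M)=0$, at each $\omega\in M$ one has $M^{cs}(\omega,\delta^*)=M^s(\omega,\delta^*)$ and $M^{cu}(\omega,\delta^*)=M^u(\omega,\delta^*)$, and these two manifolds intersect only at the basepoint. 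Applying the attracting property of $M^{cu}$ (Remark \ref{stable-leaf}(3)) anchored at the $s_n$ instants, and the repulsion property of $M^{cs}$ (Remark \ref{stable-leaf}(2)) anchored at the $t_n$ instants, would place $\Pi^{t_n}(u_2,g)$ simultaneously in $M^u(\Pi^{t_n}(u_1,g),\delta^*)$ and in $M^s(\Pi^{t_n}(u_1,g),\delta^*)$, forcing $u_1=u_2$.

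For spatial homogeneity, the subset $M_{hom}\subset M$ of spatially-homogeneous points is closed (via $X\hookrightarrow C^1(S^1)$) and positively invariant, because a spatially-homogeneous initial datum reduces \eqref{equation-lim1} to the ODE $u_t=g(t,u,0)$. By minimality of $M$ we have $M_{hom}\in\{\emptyset,M\}$. If $M_{hom}=\emptyset$, fix any $(u_0,g_0)\in M$; then $\partial_x u_0\not\equiv 0$. Differentiating \eqref{equation-lim1} in $x$ shows that $v(t,\cdot):=\partial_x\varphi(t,\cdot;u_0,g_0)$ solves the linearized variational equation \eqref{associate-eq} along the orbit of $(u_0,g_0)$. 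Since the orbit lies in the compact invariant set $M$, the function $v$ is a bounded complete solution of \eqref{associate-eq} with $v(0,\cdot)\not\equiv 0$. But hyperbolicity, i.e. $X=V^s(\omega)\oplus V^u(\omega)$ from the Sacker--Sell decomposition, forces the only bounded complete solution of \eqref{associate-eq} to vanish identically (the stable component would grow backward and the unstable component forward), a contradiction.

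The main obstacle is the $1$-cover step: the proximal sequences $t_n,s_n$ provide smallness of the orbital difference only at the proximal instants, not on a common half-line, so the placement of $\Pi^{t_n}(u_2,g)$ in both local invariant manifolds at $\Pi^{t_n}(u_1,g)$ must be justified by the asymmetric attracting/repulsion characterisations of Remark \ref{stable-leaf} applied at those instants, rather than by a uniform-in-time closeness estimate. The spatial-homogeneity step, by comparison, is a quick consequence of the reflection symmetry together with Sacker--Sell hyperbolicity.
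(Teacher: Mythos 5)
Your spatial-homogeneity argument is essentially the paper's: the paper likewise observes that for a spatially-inhomogeneous $M$ the derivative $\varphi_x(t,\cdot;u_0,g_0)$ is a nontrivial bounded complete solution of the linearized equation \eqref{associate-eq} along $M$, hence lies in $V^c(\omega)$ by the growth characterization \eqref{twoside-estimate}, contradicting $\dim V^c(M)=0$. That half is correct.

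The $1$-cover half has a genuine gap. Two-sided proximality of $(u_1,g)$ and $(u_2,g)$ only gives $\|\varphi(t_n,\cdot;u_1,g)-\varphi(t_n,\cdot;u_2,g)\|\to 0$ along sequences $t_n\to\infty$ and $s_n\to-\infty$; between consecutive proximal instants the orbits may separate by more than $\delta^*$. But the characterizations in Remark \ref{stable-leaf}(2)--(3) that you invoke require the orbital difference to stay below $\delta^*$ on an entire half-line: the repulsion property lets you conclude $v\in M^{cs}(\omega,\delta^*)$ only from $\norm{\varphi(t,\cdot;v,g)-\varphi(t,\cdot;u,g)}<\delta^*$ for \emph{all} $t\ge 0$, and similarly the attracting property of $M^{cu}$ needs control for all $t\le 0$. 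Closeness at the proximal instants alone therefore does not place $\Pi^{t_n}(u_2,g)$ in $M^s(\Pi^{t_n}(u_1,g),\delta^*)\cap M^u(\Pi^{t_n}(u_1,g),\delta^*)$, and the conclusion $u_1=u_2$ does not follow. (The paper's own treatment of proximal pairs in the hyperbolic setting, Lemma \ref{hyperbolic2}, instead uses Lemma \ref{hyperbolic1} to produce auxiliary points in the intersection of $M^s$ of one orbit with $M^u$ of the other and then runs a zero-number count; that count relies on the parity statement of Lemma \ref{centerodd}, which is itself proved \emph{using} Lemma \ref{hyper-minimal}, so borrowing that machinery here would be circular.) The paper sidesteps all of this by proving homogeneity \emph{first} and then restricting \eqref{equation-lim1} on $M$ to the scalar ODE $\dot u=g(t,u,0)$: the almost $1$-cover property follows from \cite[Theorem III.3.4]{Shen1998}, and the upgrade to a genuine $1$-cover is done by cases, via \cite[Corollary 4.6]{ShenYi-2} when $\dim V^u(M)>0$ and via uniform stability and distality when $\dim V^u(M)=0$. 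You should either adopt that reduction or replace the manifold step by an argument that actually controls the orbital difference on half-lines.
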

\begin{proof}
 This lemma can be found in \cite[Theorem 4.1.3,p.31]{SWZ}. For the sake of completeness, we give a detailed proof below.

Suppose that $M$ is spatially-inhomogeneous. Then for any $\omega=(u_0,g_0)\in M$, $\varphi_x(t,x;u_0,g_0)$ is a nontrivial solution of \eqref{associate-eq}, where $E$ is replaced by $M$. Recall that $f\in C^2$, then $\norm{\varphi_x(t,\cdot;u_0,g_0)}$ is bounded for all $t\in \mathbb{R}$. So, \eqref{twoside-estimate} implies that $(u_0)_x\in V^c(\omega)$, a contradiction to that $ \dim V^c(M)=0$. Thus, we have proved that $M$ is spatially-homogeneous.

Therefore, $M$ is also a minimal set of
\begin{equation}
\label{scalar-ode}
\dot u=\tilde g(t,u),
\end{equation}
where $\tilde g(t,u)=g(t,u,0)$ with $g\in H(f)$. It then follows from \cite[Theorem III.3.4]{Shen1998} that $M$ is an almost $1$-cover of $H(f)$. Moreover, when $\dim V^u(M)>0$, \eqref{scalar-ode} can be also viewed as a special case of \eqref{equation-lim1} under the Neumann boundary condition. Thus, by \cite[Corollary 4.6]{ShenYi-2}, $M$ is $1$-cover of $H(f)$. When $\dim V^u(M)=0$, $M$ is uniformly stable. Then \cite[Theorem II.2.8]{Shen1998} implies that $M$ is distal. Combining with $M$ is almost $1$-cover of $H(f)$, $M$ is $1$-cover of $H(f)$.
We have completed the proof of this lemma.
\end{proof}

 \begin{remark}\label{R:the-1}
{\rm
 (1) By virtue of Lemma \ref{hyper-minimal}, we immediately obtain that any minimal set $M\subset\O$ is spatially-homogeneous and a $1$-cover, provided that $\dim V^c(\O)=0$. However, it is not clear that whether the $\O$ itself is spatially-homogeneous. Our Theorem \ref{hyperbolic0} will confirm that the $\O$ itself is indeed a spatially-homogeneous $1$-cover of $H(f)$.

 (2) In fact, we can further obtain that
\begin{equation}\label{lyapunov-zero}
 \lim_{|t|\to\infty}\frac{\ln\|\varphi_x(t,x;u_0,g_0)\|}{t}=0
\end{equation}
for any $(u_0,g_0)\in M$, provided that $M$ is spatially-inhomogeneous. Recall that $\norm{\varphi_x(t,\cdot;u_0,g_0)}$ is bounded as in the proof of Lemma \ref{hyper-minimal}. Then \eqref{lyapunov-zero} is directly due to the fact that one can find a $\delta_0>0$ such that $\|\varphi_x(t,\cdot;u_0,g_0)\|\ge \delta_0$ for all $t\in\mathbb{R}$. (We can show the existence of such $\delta_0$ as follows: Without loss of generality, suppose that there is a sequence $t_n\to\infty$ such that $\|\varphi_x(t_n,\cdot;u_0,g_0)\|\to 0$ as $n\to\infty$, which entails that $\varphi_x(t_n,x;u_0,g)\to 0$ as $n\to\infty$ uniformly in $x\in S^1$. Without loss of generality, we assume that $(\varphi(t_n,\cdot;u_0,g),g\cdot{t_n})\to (u^*,g^*)\in M$
as $n\to\infty$. Thus, $u^*$ must be spatially-homogeneous, a contradiction.)}
\end{remark}

If we assume additionally that $\dim V^u(\Omega)=0$ in Theorem \ref{hyperbolic0}, then $\O$ is uniformly stable. Therefore, by virtue of  \cite[Theorem II.2.8]{Shen1998}, it is clear that $\omega(u_0,g_0)$ is spatially-homogeneous and a $1$-cover of $H(f)$.

Consequently, in the remaining of this section, we only assume that $\dim V^u(\Omega)>0$ in Theorem \ref{hyperbolic0}. We first need some technical lemmas. Let $E$ be the connected and compact invariant set as in \eqref{variation-equation}.

\begin{lemma}\label{zerocenter}
  Let $\omega=(u_0,g)\in E$ and \begin{equation}\label{L:lap-classify}
N_u=\left\{
\begin{split}
 &{\dim} V^u(E),\,\quad\,\,\,\text{ if }{\rm dim}V^u(E)\text{ is even,}\\
 &{\rm dim}V^u(E)+1,\,\text{ if }{\rm dim}V^u(E)\text{ is odd.}
\end{split}\right.
\end{equation}
Assume that  ${\rm dim}V^u(E)\ge 1$.
Then for $\delta^*>0$ small enough, one has
\begin{itemize}

\item[{\rm (1)}] If ${\rm dim}V^c(E)=0$ and ${\rm dim}V^u(E)$ is odd, then
\begin{eqnarray*}
\begin{split}
 & z(u(\cdot)-u_0(\cdot))\geq N_u\quad \text{for }u\in M^{s}(\omega,\delta^*)\setminus\{u_0\},\\
 & z(u(\cdot)-u_0(\cdot))\leq N_u-2\quad \text{for }u\in M^{u}(\omega,\delta^*)\setminus\{u_0\};\\
\end{split}
\end{eqnarray*}

\item[{\rm (2)}] If ${\rm dim}V^c(E)=1$ and $\dim V^u(E)$ is odd, then
\begin{eqnarray*}
\begin{split}
 & z(u(\cdot)-u_0(\cdot))\geq N_u\quad \text{for }u\in M^{s}(\omega,\delta^*)\setminus\{u_0\},\\
 & z(u(\cdot)-u_0(\cdot))=N_u\quad \text{for }u\in M^{c}(\omega,\delta^*)\setminus\{u_0\},\\
 & z(u(\cdot)-u_0(\cdot))\leq N_u-2\quad \text{for }u\in M^{u}(\omega,\delta^*)\setminus\{u_0\};\\
\end{split}
\end{eqnarray*}

\item[{\rm (3)}] If ${\rm dim}V^c(E)=1$ and $\dim V^u(E)$ is even, then
\begin{eqnarray*}
\begin{split}
 & z(u(\cdot)-u_0(\cdot))\geq N_u+2\quad \text{for }u\in M^{s}(\omega,\delta^*)\setminus\{u_0\},\\
 & z(u(\cdot)-u_0(\cdot))=N_u\quad \text{for }u\in M^{c}(\omega,\delta^*)\setminus\{u_0\},\\
 & z(u(\cdot)-u_0(\cdot))\leq N_u\quad \text{for }u\in M^{u}(\omega,\delta^*)\setminus\{u_0\}.\\
\end{split}
\end{eqnarray*}
\end{itemize}
\end{lemma}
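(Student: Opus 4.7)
The plan is to first analyze the zero number along the linear dynamics on each Sacker--Sell invariant subspace of \eqref{associate-eq} at $\omega$, then transfer the resulting bounds to the nonlinear local manifolds $M^s,M^c,M^u$ via Remark \ref{stable-leaf}.

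First, for each spectrum interval $I_k\subset\sigma(E)$ and every nonzero $v\in V^{k,k}(\omega)$, the solution $\Psi(t,\omega)v$ has, by Lemma \ref{zero-number}(c), an eventually constant zero number $Z_k$ as $t\to+\infty$; a symmetric argument using the exponential dichotomy on $V^{k,k}$ gives the analogous statement as $t\to-\infty$. Evenness of $Z_k$ comes from the $S^1$ fact that a $C^1$ function with only simple zeros has an even zero count. A comparison of $V^{k,k}$ with $V^{k+1,k+1}$ using the gap in exponential rates \eqref{twoside-estimate} and the non-increasing property of Lemma \ref{zero-number}(a) yields $Z_k\leq Z_{k+1}$; more generally, for $v\in V^{n_1,n_2}(\omega)$ whose nonzero spectral components span $I_{k_{\min}}$ through $I_{k_{\max}}$, the asymptotic domination of $\Psi(t,\omega)v$ by the extreme components produces $Z_{k_{\min}}\leq z(v)\leq Z_{k_{\max}}$ via Lemma \ref{zero-number}(a).

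Next, a parity/dimension count on $S^1$ matches these $Z_k$'s to $N_u$ from \eqref{L:lap-classify}. Each nonconstant Fourier mode contributes two dimensions of spectral summand with a common even $Z$-value (possibly split into two one-dimensional intervals with the same $Z$), while the constant mode contributes one dimension with $Z=0$. In cases (1),(2) ($\dim V^u(E)$ odd), $V^u$ must carry the constant mode, forcing its maximum zero number to be $N_u-2$ and the first interval just past $V^u$ to have $Z_{n_0}=N_u$. In case (3) ($\dim V^u(E)$ even), $V^u$ does not carry the constant mode; its top is a one-dimensional half-pair with zero number $N_u$, the one-dimensional $V^c$ is the companion half with $Z_{n_0}=N_u$, and the first stable interval carries $N_u+2$. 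Consequently $z|_{V^u}\leq N_u-2$ in cases (1),(2) and $\leq N_u$ in case (3); $z|_{V^c}=N_u$ in cases (2),(3); and $z|_{V^s}\geq N_u$ in cases (1),(2) and $\geq N_u+2$ in case (3).

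I then transfer these linear bounds to the local invariant manifolds. For $u\in M^u(\omega,\delta^*)\setminus\{u_0\}$, the backward orbit $v(t)=\varphi(t,\cdot;u,g)-\varphi(t,\cdot;u_0,g)$ exists and decays exponentially as $t\to-\infty$ by Remark \ref{stable-leaf}(1). Writing $v(t)=v^u(t)+h^u(v^u(t),\omega\cdot t)$ from the graph representation in Lemma \ref{invari-mani}, with $\|h^u\|=o(\|v^u\|)$, for $|t|$ sufficiently negative $v^u(t)$ is a small nonzero element of $V^u(\omega\cdot t)$; Lemma \ref{zero-cons-local} gives $z(v(t))=z(v^u(t))\leq\max z|_{V^u}$, and Lemma \ref{difference-lapnumber}(a) yields $z(u-u_0)\leq\max z|_{V^u}$. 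The transfer to $M^s$ is symmetric forward in $t$, giving $z(u-u_0)\geq\min z|_{V^s}$. For $u\in M^c(\omega,\delta^*)\setminus\{u_0\}$ (cases (2),(3)), Remark \ref{stable-leaf}(4) supplies a two-sided orbit whose normalized direction approaches the one-dimensional $V^c(\omega\cdot t)$ as $t\to\pm\infty$, so $z(v(t))=N_u$ for $|t|$ large on both sides; the monotonicity of Lemma \ref{difference-lapnumber}(a) together with matching asymptotic values forces $z(v(t))\equiv N_u$ for every $t$, and $z(u-u_0)=N_u$. The main obstacle is the parity/dimension bookkeeping in the second paragraph: accurately tracking which Sacker--Sell intervals carry the constant mode versus one-dimensional halves of paired nonconstant modes as a function of the parities of $\dim V^u(E)$ and $\dim V^c(E)$. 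Once this combinatorial match is made, the dynamical transfer is a direct consequence of the exponential dichotomy, the invariant-manifold graph representation, and the zero-number monotonicity.
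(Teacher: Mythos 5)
The paper does not actually prove this lemma; it cites \cite[Corollary 3.5]{SWZ}, and your overall strategy (zero-number bounds on the Sacker--Sell subspaces of the linearization, then transfer to $M^s,M^c,M^u$ via the graph representation and Lemmas \ref{difference-lapnumber}, \ref{zero-cons-local}) is indeed the strategy behind that citation. However, your second paragraph is not a proof but a restatement of the hard part of the result. That the linearization \eqref{associate-eq} over the compact connected set $E$ admits a Floquet-type structure --- each spectral subspace $V^{k,k}(\omega)$ carrying a single even zero number $Z_k$ that is independent of the nonzero vector $v$, of $\omega\in E$, and of whether one takes the limit $t\to+\infty$ or $t\to-\infty$, with $Z_k$ nondecreasing in $k$ and cumulative dimensions $1,3,5,\dots$ --- is precisely the content of \cite[Propositions 3.1--3.4]{SWZ} and requires a genuine argument (Lemma \ref{zero-number}(c) only gives eventual constancy forward in time for each individual solution; identifying the forward and backward constants across all of $V^{k,k}$ and pinning the multiplicities to the pattern $1,2,2,\dots$ is the substance of the lemma). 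You acknowledge this as ``the main obstacle,'' which is accurate, but it means the proposal does not close.

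There is also a concrete error in the case (3) bookkeeping. You write that when $\dim V^u(E)$ is even, ``$V^u$ does not carry the constant mode.'' The $z=0$ mode is the \emph{most unstable} Floquet direction, so it lies in $V^u$ whenever $\dim V^u(E)\ge 1$, in all three cases. The correct even-dimensional picture is: $V^u$ consists of the $z=0$ mode, $(\dim V^u-2)/2$ complete pairs with $z=2,\dots,N_u-2$, and one half of the $z=N_u$ pair, with $V^c$ the companion half. Taken literally, your version would force the $z=0$ direction into $V^c$ or $V^s$, contradicting the very conclusions $z|_{V^c}=N_u$ and $z|_{V^s}\ge N_u+2$ that you then assert. (The numerical conclusions you state are the right ones; the justification is internally inconsistent.) Two smaller issues in the transfer step: for $M^u$ and $M^s$ you apply Lemma \ref{zero-cons-local} to $v^u(t)$ for $t\ll -1$, but not every element of the unit sphere of $V^u(\omega\cdot t)$ has only simple zeros, so one must first flow to a time where the zeros of the dominant linear part are simple and control the nonlinear correction $h^u=o(\|v^u\|)$ uniformly there; and for $M^c$ your two-sided-orbit argument conflicts with the fact that $M^c(\omega,\delta^*)$ is only \emph{locally} invariant --- the cleaner route is the graph representation together with compactness of the unit sphere of the one-dimensional $V^c$ over $E$.
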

\begin{proof}
  See \cite[Corollary 3.5]{SWZ}.
\end{proof}

\begin{lemma}\label{hyperbolic1}
\begin{description}
\item[{\rm (i)}] Assume that $0\notin \sigma (E)$. Then, for $(u_1,g)$, $(u_2,g)\in E$ with $\|u_1-u_2\|\ll1$, one has $M^s(u_1,g,\delta^*)\cap M^u(u_2,g,\delta^*)\neq \emptyset$ and $M^u(u_1,g,\delta^*)\cap M^s(u_2,g,\delta^*)\neq \emptyset$.

\item[{\rm (ii)}]Assume that $0\in \sigma (E)$. Then, for $(u_1,g)$, $(u_2,g)\in E$ with $\|u_1-u_2\|\ll1$, one has $M^{cs}(u_1,g,\delta^*)\cap M^u(u_2,g,\delta^*)\neq \emptyset$ and $M^{s}(u_1,g,\delta^*)\cap M^{cu}(u_2,g,\delta^*)\neq \emptyset$.
\end{description}
\end{lemma}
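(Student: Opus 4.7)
The plan is to realize each of the two asserted intersections as a preimage of $u_2 - u_1$ under a smooth map whose derivative at the origin is a topological isomorphism, and then appeal to the Inverse Function Theorem to produce a solution whenever $\|u_1 - u_2\|$ is small.

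For (i), write $\omega_i = (u_i, g)$. By Lemma~\ref{invari-mani} together with \eqref{gener-inva}, any point of $M^s(\omega_1, \delta^*)$ has the form $u_1 + v^s + h^s(v^s, \omega_1)$ with $v^s \in V^s(\omega_1)$, $\|v^s\| < \delta^*$, and any point of $M^u(\omega_2, \delta^*)$ has the form $u_2 + v^u + h^u(v^u, \omega_2)$ with $v^u \in V^u(\omega_2)$, $\|v^u\| < \delta^*$; the graph maps $h^s$ and $h^u$ take values in the complementary invariant subspaces and satisfy $h^s(0, \omega_1) = 0$, $h^u(0, \omega_2) = 0$ with vanishing Fr\'echet derivatives at the origin. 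A common point of the two manifolds corresponds to a solution of
$$
F(v^s, v^u) \;:=\; v^s - v^u + h^s(v^s, \omega_1) - h^u(v^u, \omega_2) \;=\; u_2 - u_1,
$$
for $(v^s, v^u)$ in a neighborhood of $0$ in $V^s(\omega_1) \oplus V^u(\omega_2)$. The derivative $DF(0, 0)$ is the bounded linear map $(v^s, v^u) \mapsto v^s - v^u$. When $0 \notin \sigma(E)$ one has the direct sum decomposition $X = V^s(\omega) \oplus V^u(\omega)$ for every $\omega \in E$, so $DF(0, 0)$ is an isomorphism when $\omega_1 = \omega_2$; continuity in $\omega$ of the Sacker-Sell projections (cf.\ the continuity statement following \eqref{expone-dichot}) then keeps it invertible, with a locally uniform bound on the inverse, for $\omega_1$ and $\omega_2$ sufficiently close on $E$. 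The Inverse Function Theorem therefore yields $(v^s, v^u)$ solving $F(v^s, v^u) = u_2 - u_1$ whenever $\|u_1 - u_2\|$ is small, producing the desired point of $M^s(\omega_1, \delta^*) \cap M^u(\omega_2, \delta^*)$. The companion intersection is obtained by interchanging the roles of $u_1$ and $u_2$.

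Part (ii) follows from exactly the same scheme after replacing $V^s$ by $V^{cs}$ in the first intersection and $V^u$ by $V^{cu}$ in the second: when $0 \in \sigma(E)$ one still has the direct sum decompositions $X = V^{cs}(\omega) \oplus V^u(\omega) = V^s(\omega) \oplus V^{cu}(\omega)$, so the analogous linearization $DF(0,0)$ is again an isomorphism at $\omega_1 = \omega_2$ and hence for nearby base points on the same fiber. The main subtlety I anticipate, and the only place where genuine care is needed, is the uniformity of the IFT constants in (ii): because the center-stable and center-unstable manifolds supplied by Lemma~\ref{invari-mani} are not unique, one must fix a coherent family of such manifolds (continuous in $\omega \in E$ with locally uniform $C^1$ bounds on the graph maps $h^{cs}, h^{cu}$) so that the IFT ball of solvability can be taken independent of the pair $(\omega_1, \omega_2)$ in any compact subset of $E$. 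Once this uniformity is in place, the argument of (i) transfers verbatim.
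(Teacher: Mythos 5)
Your argument is correct and is the standard one: represent the local invariant manifolds as graphs over the transversal, continuously varying Sacker--Sell subspaces (so that a common point is a solution of $F(v^s,v^u)=u_2-u_1$ with $DF(0,0)\colon (v^s,v^u)\mapsto v^s-v^u$ an isomorphism, uniformly over the compact set $E$ by the uniform bounds in Lemma \ref{invari-mani}), and invoke the inverse function theorem. The paper itself gives no proof but cites \cite[Lemma 3.7]{SWZ}, whose argument is of essentially this same type, so your proposal matches the intended proof.
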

\begin{proof}
  See \cite[Lemma 3.7]{SWZ}.
\end{proof}

\begin{lemma}\label{spatial-hom}
  Assume that $M\subset X\times H(f)$ be a spatially-homogeneous minimal set of \eqref{equation-lim2} with $\dim V^u(M)>0$. Then $\dim V^c(M)$ must be even and $\dim V^u(M)$ be odd.
\end{lemma}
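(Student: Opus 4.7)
The plan is to exploit the reflection symmetry together with the spatial homogeneity of $M$ to reduce the linearization \eqref{associate-eq} on $M$ to an equation with $x$-independent coefficients; the Fourier decomposition in $x$ then decouples the flow into scalar modes, and a parity count finishes the argument. First, I would observe that differentiating $f(t,u,p)=f(t,u,-p)$ in the third variable and setting $p=0$ yields $g_p(t,u,0)=0$ for every $g\in H(f)$. For any $(u_0,g)\in M$, spatial homogeneity gives $(u_0)_x\equiv 0$, so in \eqref{associate-eq} one has $a(x,\omega)=g_p(0,u_0,0)=0$ and $b(x,\omega)=g_u(0,u_0,0)=:b(\omega)$; the linearization on $M$ collapses to $v_t=v_{xx}+b(\omega\cdot t)\,v$ with no $x$-dependence in the zeroth-order coefficient.

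Next, I would decompose $X$ along the Fourier basis $\{1,\cos(kx),\sin(kx):k\geq 1\}$. The constant subspace $X_0$ and each two-dimensional subspace $X_k=\mathrm{span}\{\cos(kx),\sin(kx)\}$ are invariant under the resulting linear skew-product flow, so the Sacker--Sell spectrum $\sigma(M)$ splits mode by mode. On $X_0$ the restricted equation is the scalar $\dot c = b(\omega\cdot t)c$; since the fiber is one-dimensional, its Sacker--Sell spectrum is a single compact interval, which I will call $[\alpha_0,\beta_0]$. On each $X_k$ ($k\geq 1$) the equation is two decoupled copies of $\dot c=(-k^2+b(\omega\cdot t))c$, so the spectrum on $X_k$ is the shifted interval $[\alpha_0-k^2,\beta_0-k^2]$, now carried by a two-dimensional invariant space.

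Finally, I would count $\dim V^u(M)$ and $\dim V^c(M)$ mode by mode. A mode $X_k$ contributes to $V^u(M)$ precisely when its spectrum interval lies in $(0,\infty)$ (i.e.\ $k^2<\alpha_0$), and to $V^c(M)$ precisely when that interval contains $0$ (i.e.\ $\alpha_0\leq k^2\leq \beta_0$). The hypothesis $\dim V^u(M)>0$ forces $\alpha_0>0$, because otherwise no shifted interval $[\alpha_0-k^2,\beta_0-k^2]$ can lie strictly to the right of $0$. Once $\alpha_0>0$, the $k=0$ mode contributes $1$ to $\dim V^u(M)$, each $k\geq 1$ with $k^2<\alpha_0$ contributes $2$, and mode $0$ contributes nothing to $\dim V^c(M)$; hence $\dim V^u(M)=1+2N_u$ and $\dim V^c(M)=2N_c$ for nonnegative integers $N_u, N_c$, yielding the claimed odd/even parity. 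I do not expect a serious obstacle: the reflection symmetry kills the convection term, making the decoupling immediate, and the parity conclusion reduces to the elementary fact that only the constant mode is one-dimensional while all other Fourier modes appear as $\cos/\sin$ pairs.
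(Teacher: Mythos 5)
Your proposal is correct and follows essentially the same route as the paper: reduce the linearization on the spatially-homogeneous minimal set to an $x$-independent equation, split $X$ into the one-dimensional constant Fourier mode and the two-dimensional $\sin(kx)/\cos(kx)$ modes (each carrying the spectral interval $[\alpha_0-k^2,\beta_0-k^2]$), and finish by a parity count. The only differences are minor: the paper does not invoke the reflection symmetry to kill the convection term $a(t)v_x$ but removes it by the moving-frame substitution $w(t,x)=v(t,x+c(t))e^{-\int_0^t b(s)\,ds}$ with $\dot c=-a(t)$ (so its version of the lemma needs no symmetry), and your intermediate step ``once $\alpha_0>0$ the $k=0$ mode contributes $1$ to $\dim V^u(M)$'' tacitly assumes that the mode-$0$ interval does not merge with the Sacker--Sell interval containing $0$ --- a gap that is harmless here because mode $0$ always lies in the rightmost spectral interval, and if that interval contained $0$ one would have $\dim V^u(M)=0$, contrary to hypothesis.
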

\begin{proof}
   Given any $\omega=(c_g,g)\in M$($c_g$ is a constant because $M$ is spatially-homogeneous), the variational equation associated with \eqref{equation-lim1} at $(c_g,g)$ turns out to be
\begin{equation}\label{spatial-variation}
  v_t=v_{xx}+a(t)v_x+b(t)v,\quad x\in S^1,
\end{equation} where $a(t)=g_p(t,\varphi(t,\cdot;c_g,g),0)$, $b(t)=g_u(t,\varphi(t,\cdot;c_g,g),0)$. Let $w(t,x)=v(t,x+c(t))e^{-\int_0^tb(s)ds}$ with $\dot{c}(t)=-a(t)$, then \eqref{spatial-variation} is transformed into $w_t=w_{xx}.$ Note that $w_t=w_{xx}$ possesses the simplest ``sin-cos"-mode eigenfunctions as $w_k(t,x)=e^{-k^2t}\sin kx,e^{-k^2t}\cos kx$ associated with the same eigenvalue $\lambda_k=-k^2$, $k=0,1,\cdots.$ Then it yields that $v_k(t,x)=e^{-k^2t+\int_0^tb(s)ds}\sin k (x-c(t)),e^{-k^2t+\int_0^tb(s)ds}\cos k(x-c(t))$ are the Floquet solutions of (4.9) for $k\in \mathbb{N}$. Since $\dim V^u(\omega)>0$, one has that $\dim V^c(M)$ should be even and $\dim V^u(M)$ be odd.
\end{proof}

\begin{lemma}\label{centerodd}
Assume that $0\notin \sigma(E)$ and $\dim V^u(E)\neq 0$. Then $\dim V^u(E)$ must be odd.
\end{lemma}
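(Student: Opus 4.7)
The plan is to reduce to the case of a minimal set sitting inside $E$, where Lemma \ref{spatial-hom} already forces the unstable dimension to be odd. First I would pick any minimal subset $M\subset E$; such an $M$ exists because $E$ is compact and invariant. By Remark \ref{invari-space} the Sacker--Sell spectra are nested, $\sigma(M)\subset\sigma(E)$, so the assumption $0\notin\sigma(E)$ forces $0\notin\sigma(M)$ as well. In particular both $V^c(M)$ and $V^c(E)$ are trivial, i.e.\ $\dim V^c(M)=\dim V^c(E)=0$.

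Next I would combine the two dimension estimates from Remark \ref{invari-space}: on the one hand $\dim V^u(M)\ge \dim V^u(E)$, and on the other hand $\mathrm{codim}\,V^s(M)\le\mathrm{codim}\,V^s(E)$. Because the center parts on both sides vanish, the splitting $X=V^s\oplus V^u$ gives $\mathrm{codim}\,V^s(M)=\dim V^u(M)$ and $\mathrm{codim}\,V^s(E)=\dim V^u(E)$, so the two inequalities pinch and yield
\begin{equation*}
\dim V^u(M)=\dim V^u(E)>0.
\end{equation*}

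Finally, since $M$ is a minimal set with $\dim V^c(M)=0$, Lemma \ref{hyper-minimal} applies and tells us that $M$ is spatially-homogeneous (and in fact a $1$-cover of $H(f)$). Then Lemma \ref{spatial-hom}, applied to the spatially-homogeneous minimal set $M$ with $\dim V^u(M)>0$, concludes that $\dim V^u(M)$ is odd. By the equality established in the previous step, $\dim V^u(E)$ is therefore odd, which is exactly the claim.

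The only step that requires care is the pinching argument in the middle paragraph; once one is sure that $\dim V^c$ vanishes both on $M$ and on $E$, the identity $\mathrm{codim}\,V^s=\dim V^u$ on both sides is immediate and no serious obstacle remains. Everything else is a direct appeal to results already established earlier in the paper.
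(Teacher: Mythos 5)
Your proof is correct and follows essentially the same route as the paper: take a minimal set $M\subset E$, use Remark \ref{invari-space} to get $\dim V^c(M)=0$ and $\dim V^u(M)=\dim V^u(E)$, then invoke Lemma \ref{hyper-minimal} for spatial homogeneity and Lemma \ref{spatial-hom} for oddness. In fact you are slightly more careful than the paper, which asserts $\dim V^u(M)=\dim V^u(E)$ without spelling out the pinching via $\mathrm{codim}\,V^s(M)\le\mathrm{codim}\,V^s(E)$ that you make explicit.
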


\begin{proof}
  Let $M\subset E$ be a minimal set of \eqref{equation-lim2}. Then by Remark \ref{invari-space}, we have $\dim V^u(M)\geq \dim V^u(E)$ and $\dim V^c(M)\leq \dim V^c(E)$. Since $\dim V^c(E)=0$, one has $\dim V^c(M)=0$ and $\dim V^u(M)=\dim V^u(E)$, which means $M$ is hyperbolic and hence $M$ is spatially-homogeneous (by Lemma \ref{hyper-minimal}). Then Lemma \ref{spatial-hom} directly implies that $\dim V^u(E)$ is odd.
\end{proof}

\begin{lemma}\label{hyperbolic2}
  If $0\notin \sigma(E)$, then $E$ does not contain any two sided proximal pair.
\end{lemma}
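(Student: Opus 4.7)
The plan is to argue by contradiction, assuming $(u_1,g),(u_2,g)\in E$ with $u_1\neq u_2$ form a two-sided proximal pair, witnessed by sequences $t_n\to+\infty$ and $s_n\to-\infty$ with $\|\Pi^{t_n}(u_1,g)-\Pi^{t_n}(u_2,g)\|\to 0$ and $\|\Pi^{s_n}(u_1,g)-\Pi^{s_n}(u_2,g)\|\to 0$. Using Lemma \ref{difference-lapnumber}(c), I would first shift time forward to reduce to the case where $u_1-u_2$ has only simple zeros on $S^1$, and set $N:=z(u_1-u_2)$.

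The case $\dim V^u(E)=0$ should be easy: then $V^s(\omega)=X$, so $M^s(\omega,\delta^*)$ is a full $\delta^*$-ball in each fiber, and Remark \ref{stable-leaf}(1) applied at $t=0$ yields
\[
\|u_1-u_2\|\le C e^{\alpha s_n/2}\|\Pi^{s_n}(u_1,g)-\Pi^{s_n}(u_2,g)\|\to 0,
\]
a contradiction. For the main case $\dim V^u(E)>0$, which by Lemma \ref{centerodd} is odd so $N_u=\dim V^u(E)$, I would apply Lemma \ref{hyperbolic1}(i) at $t_n$ to obtain a bridging point $v_n\in M^s(\Pi^{t_n}(u_1,g),\delta^*)\cap M^u(\Pi^{t_n}(u_2,g),\delta^*)$, and let $\tilde v_n$ be its time-$0$ preimage under the flow extension on $E$. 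The unstable backward-tracking estimate from Remark \ref{stable-leaf}(1) gives $\|\tilde v_n-u_2\|\le C e^{-\alpha t_n/2}\delta^*\to 0$. Symmetrically, at $s_n$ I pick $w_n\in M^s(\Pi^{s_n}(u_2,g),\delta^*)\cap M^u(\Pi^{s_n}(u_1,g),\delta^*)$; the stable forward-tracking estimate then makes its time-$0$ preimage $\tilde w_n$ satisfy $\tilde w_n\to u_2$ as well. (If ever $v_n=\Pi^{t_n}(u_1,g)$ or $w_n=\Pi^{s_n}(u_1,g)$, the exponential unstable estimate applied directly to the original pair already forces $u_1=u_2$.)

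Next I would close the argument by zero-number accounting. Since $v_n\in M^s(\Pi^{t_n}(u_1,g),\delta^*)\setminus\{\Pi^{t_n}(u_1,g)\}$, Lemma \ref{zerocenter}(1) yields $z(v_n-\Pi^{t_n}(u_1,g))\ge N_u$, and the monotonicity of $t\mapsto z(\varphi(t,\cdot;\tilde v_n,g)-\varphi(t,\cdot;u_1,g))$ (Lemma \ref{difference-lapnumber}(a)) pushes the bound back to $t=0$: $z(\tilde v_n-u_1)\ge N_u$. Analogously, $w_n\in M^u(\Pi^{s_n}(u_1,g),\delta^*)\setminus\{\Pi^{s_n}(u_1,g)\}$ gives $z(\tilde w_n-u_1)\le N_u-2$ after evolving forward from $s_n$ to $0$. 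Because $\tilde v_n,\tilde w_n\to u_2$ in $X\hookrightarrow C^1(S^1)$ and $u_2-u_1$ has only simple zeros, Lemma \ref{zero-cons-local} forces $z(\tilde v_n-u_1)=z(\tilde w_n-u_1)=N$ for large $n$, producing the contradiction $N_u\le N\le N_u-2$.

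I expect the main obstacle to be precisely this last step: transplanting the manifold-side zero-number bounds on the bridging points $v_n,w_n$ to bounds on the fixed function $u_2-u_1$. This is why the initial reduction to simple zeros and the local constancy of $z$ under $C^1$-small perturbations (Lemma \ref{zero-cons-local}) are crucial, and it is what motivates working with the time-$0$ preimages $\tilde v_n,\tilde w_n$ rather than directly with $v_n,w_n$, whose base points $g\cdot t_n,g\cdot s_n$ drift along $H(f)$.
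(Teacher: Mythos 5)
Your proof is correct and takes essentially the same route as the paper's: produce bridging points via Lemma \ref{hyperbolic1}(i) at times $t_n\to\infty$ and $s_n\to-\infty$, use the exponential tracking of Remark \ref{stable-leaf}(1) to transport them back to a time where the difference has only simple zeros, and derive the incompatible zero-number bounds $\ge N_u$ and $\le N_u-2$ from Lemma \ref{zerocenter}(1) together with the monotonicity of $z$ and Lemma \ref{zero-cons-local}. One harmless slip: when $\dim V^u(E)$ is odd, \eqref{L:lap-classify} gives $N_u=\dim V^u(E)+1$ rather than $\dim V^u(E)$, but this does not affect the argument since only the gap between the two bounds is used.
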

\begin{proof}
  Suppose that there is a two sided proximal pair $\{(u_1,g),(u_2,g)\}$ in $E$. By Lemma \ref{difference-lapnumber}, one may assume that $t_0\in\mathbb{R}$ is such that $\varphi(t_0,\cdot;u_1,g)-\varphi(t_0,\cdot;u_2,g)$ has only simple zeros on $S^1$. Then it follows from Lemma \ref{zero-cons-local}, there is an $\epsilon_0>0$ such that for any $v\in X$ with $\|v\|<\epsilon_0$, $\varphi(t_0,\cdot;u_1,g)-\varphi(t_0,\cdot;u_2,g)+v(\cdot)$ has only simple zeros on $S^1$ and
\begin{equation}\label{localconstant}
  z(\varphi(t_0,\cdot;u_1,g)-\varphi(t_0,\cdot;u_2,g)+v(\cdot))=z(\varphi(t_0,\cdot;u_1,g)-\varphi(t_0,\cdot;u_2,g)).
\end{equation}
Let $\{t_n\}$, $\{s_n\}$ with $t_n\to \infty$, $s_n\to -\infty$ be such that
\begin{equation}
\begin{split}
&\|\varphi(t_n,\cdot;u_1,g)-\varphi(t_n,\cdot;u_2,g)\|\to 0,\\
&\|\varphi(s_n,\cdot;u_1,g)-\varphi(s_n,\cdot;u_2,g)\|\to 0
\end{split}
\end{equation}
as $n\to \infty$. By Lemma \ref{hyperbolic1}(i), there are $u^n_{+}\in M^s(\varphi(t_n,\cdot; u_1,g),g\cdot t_n,\delta^*)\cap M^u(\varphi(t_n,\cdot; u_2,g),g\cdot t_n,\delta^*)$ and $u^n_{-}\in M^s(\varphi(s_n,\cdot; u_1,g),g\cdot s_n,\delta^*)\cap M^u(\varphi(s_n,\cdot; u_2,g),g\cdot s_n,\delta^*)$ for $n$ sufficiently large. Then, $\varphi(s,\cdot;u^n_{+},g\cdot t_n)$ exists for $s<0$ and $\varphi(t,\cdot;u^n_{-},g\cdot s_n)$ exists for $t>0$. Thus by \eqref{exponen-decrea},
\begin{equation}\label{estimate1}
\begin{split}
  &\|\varphi(s,\cdot;u^n_{+},g\cdot t_n)-\varphi(s,\cdot;\varphi(t_n,\cdot;u_2,g),g\cdot t_n)\|\\
  &\leq Ce^{\frac{\alpha s}{2}}\|u^n_+-\varphi(t_n,\cdot;u_2,g)\|\\
\end{split}
\end{equation}
and
\begin{equation}\label{estimate2}
\begin{split}
  &\|\varphi(t,\cdot;u^n_{-},g\cdot s_n)-\varphi(t,\cdot;\varphi(s_n,\cdot;u_1,g),g\cdot s_n)\|\\
  &\leq Ce^{-\frac{\alpha t}{2}}\|u^n_--\varphi(s_n,\cdot;u_1,g)\|
\end{split}
\end{equation}
for any $s\leq 0\leq t$. Note that
\begin{equation}\label{equality}
\begin{split}
&\varphi(t_0-t_n,\cdot;\varphi(t_n,\cdot;u_2,g),g\cdot t_n)=\varphi(t_0,\cdot;u_2,g)\\
&\varphi(t_0-s_n,\cdot;\varphi(s_n,\cdot;u_1,g),g\cdot s_n)=\varphi(t_0,\cdot;u_1,g).
\end{split}
\end{equation}
By \eqref{estimate1}, \eqref{estimate2} and \eqref{equality}, there is $n_0$ sufficiently large such that $s_{n_0}<t_0< t_{n_0}$ and
\begin{equation}
\begin{split}
&\|\varphi(t_0-t_{n_0},\cdot;u^{n_0}_{+},g\cdot t_{n_0})-\varphi(t_0,\cdot;u_2,g)\|<\epsilon_0,\\
&\|\varphi(t_0-s_{n_0},\cdot;u^{n_0}_{-},g\cdot t_{n_0})-\varphi(t_0,\cdot;u_1,g)\|<\epsilon_0.
\end{split}
\end{equation}
Since $u^{n_0}_+\in  M^s(\varphi(t_{n_0},\cdot; u_1,g),g\cdot t_{n_0},\delta^*)$, Lemma \ref{centerodd} and Lemma \ref{zerocenter}(1) imply that
\begin{equation}\label{unstable-control1}
  z(\varphi(t_{n_0},\cdot; u_1,g)-u^{n_0}_+)\geq N_u.
\end{equation}
Hence,
\begin{equation}\label{unstable-control2}
\begin{split}
  &z(\varphi(t_0,\cdot;u_1,g)-\varphi(t_0-t_{n_0},\cdot;u^{n_0}_{+},g\cdot t_{n_0}))\\
  &\geq z(\varphi(t_{n_0},\cdot; u_1,g)-u^{n_0}_+)\geq N_u.
\end{split}
\end{equation}
Combining  \eqref{localconstant}, \eqref{equality} and \eqref{unstable-control2}, one has
\begin{equation*}
\begin{split}
&z(\varphi(t_0,\cdot;u_1,g)-\varphi(t_0,\cdot;u_2,g)-(\varphi(t_0-t_{n_0},\cdot;u^{n_0}_{+},g\cdot t_{n_0})-\varphi(t_0,\cdot;u_2,g)))\\
&=z(\varphi(t_0,\cdot;u_1,g)-\varphi(t_0-t_{n_0},\cdot;u^{n_0}_{+},g\cdot t_{n_0}))\geq N_u.
\end{split}
\end{equation*}
Similarly, one can also obtain that
\begin{equation*}
\begin{split}
&z(\varphi(t_0,\cdot;u_1,g)-\varphi(t_0,\cdot;u_2,g)-(\varphi(t_0,\cdot;u_1,g)-\varphi(t_0-s_{n_0},\cdot;u^{n_0}_{-},g\cdot t_{n_0})))\\
&=z(\varphi(t_0-s_{n_0},\cdot;u^{n_0}_{-},g\cdot t_{n_0})-\varphi(t_0,\cdot;u_2,g))\leq N_u-2,
\end{split}
\end{equation*}
a contradiction. Thus, we have completed the proof.
\end{proof}

Now we are ready to prove Theorem \ref{hyperbolic0}.

\begin{proof}[Proof of Theorem \ref{hyperbolic0}]
 Suppose that $\omega(u_0,g_0)$ is not minimal. Then either case (ii) or case (iii) in Theorem \ref{structure} can hold.

(1) If $\omega(u_0,g_0)=M_1\cup M_{11}$ as in Theorem \ref{structure}(ii), then $M_1$ is spatially-homogeneous and a $1$-cover of $H(f)$
(see Lemma \ref{hyper-minimal}). So, fix any
$(u_{11},g)\in M_{11}$ and $(u_1,g)=M_1\cap p^{-1}(g)$. It then follows from Theorem \ref{structure}(ii) that $(u_1,g)$, $(u_{11},g)$ is a two sided proximal pair, which contradicts Lemma \ref{hyperbolic2}.

(2) If $\omega(u_0,g_0)=M_1\cup M_2\cup M_{12}$  as in Theorem \ref{structure}(iii), then  $M_1$, $M_2$ are $1$-covers of $H(f)$
 (see Lemma \ref{hyper-minimal} again). Fix any $(u_{12},g)\in M_{12}$ and $(u_i,g)=M_i\cap p^{-1}(g)$ for $i=1,2$. For simplicity, we just assume that $\omega(u_{12},g)\cap M_1\neq \emptyset$ and $\alpha(u_{12},g)\cap M_2\neq \emptyset$. Recall that $M_1,M_2$ are $1$-covers. Then Lemma \ref{hyperbolic2} implies that $\a(u_{12},g)\cap M_1=\emptyset$ and $\omega(u_{12},g)\cap M_2=\emptyset$ (Otherwise, $\{(u_1,g),(u_{12},g)\}$ or $\{(u_2,g),(u_{12},g)\}$  forms a two sided proximal pairs with $(u_1,g)\in M_1$ (resp. $(u_2,g)\in M_2$), a contradiction).

We assert that $\Pi^{t}(u_{12},g)-\Pi^{t}(u_{1},g)\to 0$ as $t\to\infty$. Suppose on the contrary that there is a sequence $t_n\to \infty$ such that $\Pi^{t_n}(u_{12},g)\to (u^*_{12},g^*)\ne (u^*_1,g^*)\leftarrow\Pi^{t_n}(u_{1},g)$. Observing that $\omega(u_{12},g)\cap M_2=\emptyset$ and $M_1$ is a $1$-cover, one has $(u^*_{12},g^*)\notin M_1\cup M_2$. Hence, $(u^*_{12},g^*)\in M_{12}$; and moreover, $(\alpha(u^*_{12},g^*)\cup\omega(u^*_{12},g^*))\cap M_2=\emptyset$. Then Theorem \ref{structure}(iii) implies that $\omega(u^*_{12},g^*)\cap M_1\ne \emptyset$ and $\a(u^*_{12},g^*)\cap M_1\ne \emptyset$. In other words, $\{(u^*_{12},g^*),(u^*_1,g^*)\}$ forms a two sided proximal pair, which contradicts Lemma \ref{hyperbolic2}. Thus, we have completed the proof of assertion. Similarly, one may also obtain that $\Pi^{t}(u_{12},g)-\Pi^{t}(u_{2},g)\to 0$ as $t\to -\infty$.

Therefore, by the invariant manifold theory, $\Pi^t(u_{12},g)\in M^s(\Pi^t(u_{1},g),\delta^*)$ for $t\gg 1$. By Lemma \ref{difference-lapnumber}(a) and Lemma \ref{zerocenter}(1), one has
 \begin{equation}\label{E:lower-bdd-1}
   z(\varphi(t,\cdot;u_{1},g)-\varphi(t,\cdot;u_{12},g))\geq N_u, \quad t\in \mathbb{R}^1.
 \end{equation}
  Recall that $\Pi^{t}(u_{12},g)-\Pi^{t}(u_{2},g)\to 0$ as $t\to -\infty$.
We now turn to show that
 \begin{equation}\label{E:lower-bdd-1-1}
   z(\varphi(t,\cdot;u_{1},g)-\varphi(t,\cdot;u_2,g))
   =z(\varphi(t,\cdot;u_{1},g)-\varphi(t,\cdot;u_{12},g)),\, \text{for}\ t\,\text{ sufficiently negative}.
 \end{equation}
In fact, by Lemma \ref{constant-on-twosets}, we know that there is $N_0\in \mathbb{N}$ such that $z(\varphi(t,\cdot;u_{1},g)-\varphi(t,\cdot;u_2,g))=N_0$ for all $t\in \mathbb{R}$.  The compactness of $M_1$ and $M_2$ implies that there exist $t_n\to -\infty$ and $(\tilde u_i,\tilde g)\in M_i$ such that $\Pi^{t_n}(u_i,g)\to(\tilde u_i,\tilde g)$ as $n\to \infty$($i=1,2$), which also means that $\Pi^{t_n}(u_{12},g)\to(\tilde u_2,\tilde g)$ as $n\to\infty$. So by Lemma \ref{zero-cons-local}, $z(\tilde u_1-\tilde u_2)=z(\varphi(t_n,\cdot;u_{1},g)-\varphi(t_n,\cdot;u_{12},g))$ for $n\gg 1$. Furthermore, there exists $m\in\mathbb{N}$ such that for all $n>m$, one has
  \begin{equation}\label{neg-const}
    z(\varphi(t_n,\cdot;u_{1},g)-\varphi(t_n,\cdot;u_{12},g))=N_0.
  \end{equation}
Hence, by Lemma \ref{difference-lapnumber}(a), $z(\varphi(t,\cdot;u_1,g)-\varphi(t,\cdot;u_{12},g)= N_0$ for $t$ sufficiently negative. Consequently, $z(\varphi(t,\cdot;u_1,g)-\varphi(t,\cdot;u_{12},g)\le N_0$ for $t\in \mathbb{R}$. Combining with \eqref{neg-const}, this implies \eqref{E:lower-bdd-1-1} directly.

By virtue of \eqref{E:lower-bdd-1}-\eqref{E:lower-bdd-1-1}, it follows that
 \begin{equation}\label{big-centernumber}
 \begin{split}
   &z(\varphi(t,\cdot;u_{1},g)-\varphi(t,\cdot;u_2,g))\\
   =&z(\varphi(t,\cdot;u_{1},g)-\varphi(t,\cdot;u_{12},g))\geq N_u >0,\quad \text{for}\ t\,\text{ sufficiently negative}.
 \end{split}
 \end{equation}
On the other hand, since $M_1$ and $M_2$ is spatially-homogeneous, it is easy to that $z(\varphi(t,\cdot;u_1,g)-\varphi(t,\cdot;u_2,g))=0$ for all $t\in \mathbb{R}^1$, a contradiction.

Thus, both (ii) and (iii) in Theorem \ref{structure} can not happen, which implies that $\omega(u_0,g_0)$ is minimal; and hence, a spatially-homogeneous $1$-cover of $H(f)$.
\end{proof}

\section{$\omega$-limit sets with $1$-dimensional center spaces}
In this section,  we focus on the structure of general $\omega$-limit sets with nontrivial center spaces. To the best of our knowledge, this question has been hardly studied for both separated boundary and periodic boundary conditions (see some related results for the minimal sets \cite{SWZ,ShenYi-JDDE96}). Here we will consider the structure of $\omega(u_0,g_0)$ with $1$-dimensional center space. To be more specific, we have the following theorem, which seems to be the first attempt to tackle this question.

\begin{theorem}\label{norma-hyper}
Assume that $f(t,u,u_x)=f(t,u,-u_x)$.
  Let $\O=\omega(u_0,g_0)$ be the $\omega$-limit set with $\dim V^c(\Omega)=1$. Then the following hold.
  \begin{itemize}
 \item[{ \rm (i)}] If ${\rm dim}V^u(\Omega)>0$, then  $\O$ is a spatially-inhomogeneous $1$-cover of $H(f)$;

 \item[{ \rm (ii)}]  If ${\rm dim}V^u(\Omega)=0$, then $\Omega$ is spatially-homogeneous.
\end{itemize}
\end{theorem}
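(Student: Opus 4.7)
The plan is to prove both parts of Theorem \ref{norma-hyper} via the same two-step scheme: first reduce to the case that $\Omega$ is minimal by ruling out alternatives (ii) and (iii) of Theorem \ref{structure}, and then combine the \cite{SWZ} classification of minimal sets with $\dim V^c=1$ (spatially-homogeneous, or embedded into an almost-periodically forced $S^1$-flow) with Proposition \ref{order} and Lemma \ref{refle-prop} to upgrade any such $S^1$-embedding to a genuine $1$-cover, and with Lemma \ref{spatial-hom} to decide between the two alternatives.

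For part (i), $\dim V^u(\Omega)>0$, I would first note that by Remark \ref{invari-space} each minimal $M\subset\Omega$ satisfies $\dim V^u(M)\ge \dim V^u(\Omega)>0$ and $\dim V^c(M)\le 1$; combining Lemma \ref{hyper-minimal}, Lemma \ref{spatial-hom} and the minimal-set result of \cite{SWZ}, each such $M$ is a $1$-cover of $H(f)$. Assuming Theorem \ref{structure}(iii) for contradiction and picking $(u_{12},g)\in M_{12}$ with, say, $\omega(u_{12},g)\subset M_1$ and $\alpha(u_{12},g)\subset M_2$, the $1$-covering plus Remark \ref{stable-leaf}(2)--(3) force $\Pi^t(u_{12},g)\in M^{cs}(\Pi^t(u_1,g),\delta^*)$ for $t\gg 1$ and $\Pi^t(u_{12},g)\in M^{cu}(\Pi^t(u_2,g),\delta^*)$ for $-t\gg 1$. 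Applying Lemma \ref{zerocenter} then gives $z(\varphi(t,\cdot;u_{12},g)-\varphi(t,\cdot;u_1,g))\ge N_u$ for $t\gg1$, and, after refining to the unstable leaf $\bar M_u(\cdot)$ of Remark \ref{stable-leaf}(4), $z(\varphi(t,\cdot;u_{12},g)-\varphi(t,\cdot;u_2,g))\le N_u-2$ for $-t\gg1$. Combined with the global constant $N_0$ of Lemma \ref{constant-on-twosets} and the monotonicity of $z$ from Lemma \ref{difference-lapnumber}(a) (which, via the local stability Lemma \ref{zero-cons-local}, transfers each bound from one difference to the other as $u_{12}$ gets close to $u_1$ or to $u_2$), these yield incompatible two-sided inequalities for $N_0$, a contradiction. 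Alternative (ii) is handled identically with $M_1=M_2$. Once $\Omega$ is minimal, Lemma \ref{spatial-hom} forbids spatial-homogeneity (since $\dim V^c(\Omega)=1$ is odd while $\dim V^u(\Omega)>0$), and Proposition \ref{order} collapses the $S^1$-embedding of \cite{SWZ} to a $1$-cover.

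For part (ii), $\dim V^u(\Omega)=0$, the minimality reduction follows the same outline but with a key simplification: since there is no unstable direction, any connecting orbit of Theorem \ref{structure}(ii) or (iii) is forced to lie on the intersection $M^{cs}\cap M^{cu}=M^c$ of the local center-stable and center-unstable manifolds of the target minimal set(s) (because both the forward and the backward orbit remain bounded in $\Omega$ and approach a minimal set without exponential rates, by Remark \ref{stable-leaf}(2)--(3)). Since $\dim V^c(\Omega)=1$, the center manifold is one-dimensional, and the scalar fiberwise monotonicity of differences (via Lemma \ref{difference-lapnumber}(a)--(b)) forbids nontrivial homoclinic orbits to a $1$-cover or heteroclinic orbits between two $1$-cover minimal sets lying on the same $M^c$. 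Hence $\Omega$ is minimal. Next, if such a minimal $\Omega$ were spatially-inhomogeneous, then $u_x$ would be a bounded-on-$\mathbb{R}$ solution of the linear variational equation along any orbit in $\Omega$ (using the flow extension of $\Omega$); since $\dim V^u(\Omega)=0$, this forces $u_x\in V^c$ at every point, so $u_x$ spans the $1$-dimensional $V^c$. Proposition \ref{order} then imposes $u_x(x_0)\equiv 0$, pinning this spanning section of $V^c$ to vanish at the fixed anchor $x_0$; in the \cite{SWZ} $S^1$-embedding this fixes the phase uniquely in each fiber and collapses $\Omega$ to a structure incompatible with $u_x\not\equiv 0$. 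Thus $\Omega$ must be spatially-homogeneous.

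The main obstacle lies in the zero-number bookkeeping used to rule out non-minimality in part (i): one must carefully match the values of $N_u$ attached to the different minimal sets $M_i\subset\Omega$ (depending on $\dim V^c(M_i)\in\{0,1\}$ and the parity of $\dim V^u(M_i)$, as in the three cases of Lemma \ref{zerocenter}) and use the unstable-leaf foliation of Remark \ref{stable-leaf}(4) to sharpen the $M^{cu}$-bound from $\le N_u$ to $\le N_u-2$ whenever the connecting orbit is genuinely transversal to the center. In part (ii), the subtlest point is the final step that converts the weak information ``$u_x\in V^c$ with $u_x(x_0)\equiv 0$'' into a genuine contradiction with spatial-inhomogeneity; this is where the reflection involution $\rho_{x_0}$, viewed as a fiberwise $\mathbb{Z}/2$-action fixing every point of $\Omega$ (Lemma \ref{refle-prop}(ii)), is essential to kill the $S^1$-direction of the \cite{SWZ} embedding.
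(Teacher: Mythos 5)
Your plan for part (ii) is built on a false intermediate claim. You propose to first prove that $\Omega$ is minimal (by ruling out alternatives (ii) and (iii) of Theorem \ref{structure}) and then deduce spatial homogeneity. But when $\dim V^c(\Omega)=1$ and $\dim V^u(\Omega)=0$ the $\omega$-limit set need \emph{not} be minimal: Example 6.1, Case (i) of the paper ($\lambda=0$, $u_0\equiv 1$) produces exactly such an $\Omega$ with $\dim V^c(\Omega)=1$, $\dim V^u(\Omega)=0$, which is not minimal and falls under Theorem \ref{structure}(ii); the paper points this out explicitly. So your ``$M^{cs}\cap M^{cu}=M^c$ forbids homoclinics'' step cannot be correct, and the whole reduction collapses. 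The paper's actual proof of (ii) never reduces to minimality: it argues directly that if some $(\tilde u,\tilde g)\in\Omega$ were spatially-inhomogeneous, then $\tilde u_x$ would span the one-dimensional $V^c(\tilde\omega)$, and — because $\dim V^u(\Omega)=0$ places $0$ in the \emph{top} spectral interval — the exponential dichotomy estimate \eqref{seperation} combined with the exponential separation of the strongly monotone semiflow and its uniqueness forces $V^c(\tilde\omega)$ to coincide with the principal bundle $X_1(\tilde\omega)=\mathrm{span}\{v(\tilde\omega)\}$ with $v(\tilde\omega)\in\mathrm{Int}X^+$. Hence $\tilde u_x$ would be of one sign on $S^1$, impossible for a $2\pi$-periodic function. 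Your substitute for this step (``$u_x(x_0)\equiv 0$ pins the phase and collapses the $S^1$-embedding'') is not an argument: knowing that the section $u_x$ of $V^c$ vanishes at the anchor $x_0$ produces no contradiction by itself, and you give no mechanism for deriving one.

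For part (i) your outline does follow the paper's strategy (rule out Theorem \ref{structure}(ii)--(iii) by incompatible two-sided zero-number bounds, then invoke the minimal-set classification of \cite{SWZ}), but two substantive steps are missing. First, Theorem \ref{structure}(iii) does not hand you $\omega(u_{12},g)\subset M_1$ and $\alpha(u_{12},g)\subset M_2$; a priori the connecting orbit could be two-sided proximal to one of the minimal sets (e.g.\ $M_1\cup M_2\subset\omega(u_{12},g)$). The paper must first prove Lemma \ref{norma-hyperb2} — no pair $\{(u_1,g),(u_2,g)\}$ with $(u_1,g)\in M$, $(u_2,g)\in\Omega\setminus M$ is two-sided proximal — and this is the technical heart of the section (it itself splits into four subcases according to whether $M$ is spatially homogeneous and the parity of $\dim V^u(\Omega)$, and uses the identification $M^{cs}=\tilde M^s$ resp.\ $M^{cu}=\tilde M^u$ of Remark \ref{R:uniqueness of W-cs}). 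Only after that can one assert $\Pi^t(u_{12},g)-\Pi^t(u_1,g)\to 0$ and place the orbit on a genuine stable leaf. Second, your zero-number bookkeeping uses a single pair of bounds ($\ge N_u$ forward, $\le N_u-2$ backward), whereas the correct constants depend on whether each $M_i$ is spatially homogeneous ($\dim V^c(M_i)=0$, for which $N_u$ must be recomputed via $\dim\tilde M^u_i=\dim V^u(M_i)$, which jumps by one relative to $\dim V^u(\Omega)$ by Lemma \ref{homogeneous}(b)) or inhomogeneous, and on the parity of $\dim V^u(\Omega)$; the paper's cases (b1)--(b3) exist precisely because these constants do not match up uniformly. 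As written, your inequalities are not justified in the mixed case (b3).
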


We first prove Theorem \ref{norma-hyper}(ii).

\begin{proof}[Proof of Theorem \ref{norma-hyper}(ii)]
 Assume  that $\dim V^u(\Omega)=0$. We write the Sacker-Sell spectrum $\sigma(\O)=\cup_{k=0}^\infty I_k$, where $I_k=[a_k,b_k]$ ($a_0\leq 0\leq b_0$). Suppose that there is $\tilde \omega=(\tilde u,\tilde g)\in \O$ such that $\tilde u(x)$ is spatially-inhomogeneous. Then $\varphi_x(t,\cdot;\tilde u,\tilde g)$ is a nontrivial solution of the linearized equation \eqref{associate-eq}. Moreover, since $\tilde{\omega}=(\tilde u,\tilde g)\in \O$, one has $\|\varphi_x(t,\cdot;\tilde u,\tilde g)\|$ is bounded for all $t\in \mathbb{R}$. So, \eqref{twoside-estimate} implies that $\tilde u_x\in V^c(\tilde \omega)$. Together with $\dim V^c(\tilde \omega)=1$, it yields that $V^c(\tilde \omega)=\mathrm{span}\{\tilde u_x\}$. Let $\lambda=\frac{1}{2}(a_0+b_1)<0$. Then $\Psi_{\lambda}(t,\omega)=e^{-\lambda t}\Psi(t,\omega)$ admits an exponential dichotomy as $X=V^s(\omega)\oplus V^c(\omega)$ over $\O$, i.e., there exist $K_1>0$ and $\alpha_1>0$ such that
\begin{equation}\label{expone-dichot1}
\begin{split}
  &\|\Psi_{\lambda}(t,\omega)u^s\|\leq K_1e^{-\alpha_1 t}\|u^s\|, \ t\geq 0,\quad\forall u^s\in V^s(\omega),\\
  &\|\Psi_{\lambda}(t,\omega)u^c\|\leq K_1e^{\alpha_1 t}\|u^c\|, \ t\leq 0,\quad\forall u^c\in V^c(\omega).
\end{split}
\end{equation}
Recall that $\Psi_{\lambda}(t,\omega)\mid_{V^c(\omega)}$ is an isomorphism, then one has
\begin{equation}\label{center-estimate}
  K^{-1}_1e^{\alpha_1t}\|u^c\|\leq \|\Psi_{\lambda}(t,\omega)u^c\|\ \text{for any} \ t\geq 0,\ u^c\in V^c(\omega), \ \omega\in\O.
\end{equation}
Together with \eqref{expone-dichot1} and \eqref{center-estimate}, we have
\begin{equation}\label{seperation}
  \|\Psi(t,\omega)u^s\|\leq K_1^{2}e^{-2\alpha_1t}\|\Psi(t,\omega)u^c\|,\quad t\geq 0,\ \omega\in \O,
\end{equation}
for any $u^s\in V^s(\omega)$ and $u^c\in V^c(\omega)$ with $\|u^s\|=\|u^c\|=1$. On the other hand, by the exponentially separated property of the strongly monotone skew-product semiflows (see, e.g. \cite[p.38]{Shen1998}), $X=X_1(\omega)\oplus X_2(\omega)$ where $X_i(\omega)$($i=1,2$) vary continuously in $\omega\in \O$ and $X_1(\omega)=\mathrm{span}\{v(\omega)\}$ with $v(\omega)\in \mathrm{Int} X^+$ and $\|v(\omega)\|=1$. Moreover, there exist $\alpha_2>0$ and $K_2>0$ such that for any $u_2\in X_2(\omega)$ with $\|u_2\|=1$,
\begin{equation}\label{continuous-separation}
  \|\Psi(t,\omega)u_2\|\leq K_2e^{-\alpha_2t}\|\Psi(t,\omega)v(\omega)\| \quad(t>0).
\end{equation}
Let $\omega=\tilde \omega$. Then $X=X_1(\tilde \omega)\oplus X_2(\tilde\omega)$ and $X=V^c(\tilde\omega)\oplus V^s(\tilde\omega)$ satisfying \eqref{seperation} and \eqref{continuous-separation}. By the uniqueness of the exponential separation on $X$ (see, e.g. \cite[Theorem 3.2.3]{Mierczynski}),  one has $X_1(\tilde \omega)=V^c(\tilde \omega)$ and $X_2(\tilde \omega)=V^s(\tilde\omega)$. So, $\tilde u_x\in \mathrm{Int} X^+$, which implies that $\tilde u(x)$ is a strictly monotone function for $x\in S^1$. This contradicts to the periodic boundary condition. Thus, we have completed the proof of Theorem 5.1(ii).
\end{proof}

We now make some preparation for proving Theorem \ref{norma-hyper}(i). For given $u\in X$ and $a\in S^1$, we define $\sigma_a u=u(\cdot+a)$ and let
$$
\Sigma u=\{\sigma_au\,|\, a\in S^1\},
$$
we also let $m(u)=\max_{x\in S^1}u(x)$ be the maximal value of $u$ on $S^1$. For any $a\in S^1$, if $\varphi(t,\cdot;u,g)$ is a classical solution of \eqref{equation-lim1}, then it is easy to check that $\sigma_a\varphi(t,\cdot;u,g)$ is a classical solution of \eqref{equation-lim1}. Moreover, the uniqueness of solution ensures the translation invariance, that is, $\sigma_a\varphi(t,\cdot;u,g)=\varphi(t,\cdot;\sigma_au,g)$.

\begin{lemma}\label{equivalence}
Let $M\subset X\times H(f)$ be a minimal set of \eqref{equation-lim2}. Assume that $\dim V^c(M)=1$. Then $M$ is spatially-homogeneous if and only if $\dim V^u(M)=0$.
\end{lemma}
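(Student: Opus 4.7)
The plan is to prove the two implications separately, both of which reduce to results already available in the paper.

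\emph{Necessity.} Suppose $M$ is spatially-homogeneous. I would invoke Lemma \ref{spatial-hom} directly: if $\dim V^u(M)>0$, then $\dim V^c(M)$ must be even. Since the hypothesis forces $\dim V^c(M)=1$, this is incompatible with $\dim V^u(M)>0$, and we must conclude $\dim V^u(M)=0$. This half is essentially a one-line contradiction once Lemma \ref{spatial-hom} is in hand.

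\emph{Sufficiency.} This direction mirrors the proof of Theorem \ref{norma-hyper}(ii), with $\Omega$ replaced by the minimal set $M$. Suppose for contradiction that $\dim V^u(M)=0$ but $M$ contains some spatially-inhomogeneous point $\tilde\omega=(\tilde u,\tilde g)$. First I would observe that $\varphi_x(t,\cdot;\tilde u,\tilde g)$ is a nontrivial bounded solution of the linearized equation \eqref{associate-eq}, because $\tilde u_x\not\equiv 0$ and $f\in C^2$ gives uniform bounds on $\varphi_x$ along the orbit in $M$. Since $\dim V^u(M)=0$, the two-sided growth characterization \eqref{twoside-estimate} forces $\tilde u_x\in V^c(\tilde\omega)$, and because $\dim V^c(\tilde\omega)=1$ we must have $V^c(\tilde\omega)=\mathrm{span}\{\tilde u_x\}$.

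The next step is to identify $V^c(\tilde\omega)$ with the one-dimensional dominant bundle $X_1(\tilde\omega)$ provided by the exponential separation theorem for strongly order-preserving skew-product semiflows (cf. the argument around \eqref{seperation}--\eqref{continuous-separation} in the proof of Theorem \ref{norma-hyper}(ii)). Picking $\lambda=\tfrac12(a_0+b_1)<0$ from the Sacker-Sell spectrum of $M$, one obtains an exponential dichotomy $X=V^s(\omega)\oplus V^c(\omega)$ for $\Psi_\lambda(t,\omega)$ over $M$, and the inequality
\[
\|\Psi(t,\omega)u^s\|\le K_1^2 e^{-2\alpha_1 t}\|\Psi(t,\omega)u^c\|,\qquad t\ge 0,
\]
for unit vectors $u^s\in V^s(\omega)$, $u^c\in V^c(\omega)$. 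Comparing this with the continuous separation $X=X_1(\omega)\oplus X_2(\omega)$ with $X_1(\omega)=\mathrm{span}\{v(\omega)\}\subset \mathrm{Int}\,X^+$, the uniqueness of the exponential separation (\cite[Theorem 3.2.3]{Mierczynski}) forces $X_1(\tilde\omega)=V^c(\tilde\omega)=\mathrm{span}\{\tilde u_x\}$. Hence $\tilde u_x\in\mathrm{Int}\,X^+$ (up to sign), so $\tilde u$ is strictly monotone on $S^1$, contradicting the periodic boundary condition.

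The main obstacle is to ensure that the continuous separation machinery and the Sacker-Sell dichotomy really do apply to the minimal set $M$ (rather than to the $\omega$-limit set $\Omega$ as in Theorem \ref{norma-hyper}(ii)). This should not be serious, since compactness and minimality are precisely what the exponential separation theorem requires, and $0\in\sigma(M)$ is guaranteed by $\dim V^c(M)=1>0$; but the verification deserves to be stated explicitly. Once this is in place, the proof of sufficiency is a direct transcription of the argument already carried out for $\Omega$.
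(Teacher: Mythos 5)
Your proposal is correct and follows essentially the same route as the paper: necessity is exactly the paper's one-line appeal to Lemma \ref{spatial-hom}, and for sufficiency the paper simply observes that a minimal set is itself an $\omega$-limit set and cites Theorem \ref{norma-hyper}(ii) as a direct corollary, whereas you re-run that theorem's exponential-separation argument with $M$ in place of $\Omega$ — a transcription that is valid, as you note, since $M$ is compact, connected and invariant, so the Sacker--Sell and continuous-separation machinery applies verbatim.
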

\begin{proof}
The necessity directly part follows from Lemma \ref{spatial-hom} (Otherwise, $\dim V^u(\omega)\ne 0$ will imply that $\dim V^c(M)$ must be even, a contradiction to $\dim V^c(M)=1$).

The sufficiency part is a direct corollary of Theorem \ref{norma-hyper}(ii).
\end{proof}

In the remaining of this section, we always assume that $\dim V^u(\Omega)>0$ and $\dim V^c(\Omega)=1$.

\begin{lemma}\label{homogeneous}
  Let $\Omega=\omega(u_0,g_0)$ be an $\omega$-limit set of \eqref{equation-lim2} and $M$ be any minimal set in $\O$. Assume that  $\dim V^c(\Omega)=1$ and $\dim V^u(\Omega)>0$. Then $\dim V^c(M)\le 1$.  Furthermore,

 {\rm (a)} When $\dim V^c(M)=1$, $M$ is a spatially-inhomogeneous $1$-cover of $H(f)$; and moreover, there is $\delta^*>0$ such that $M^c(\omega,\delta^*)\subset \Sigma u$ for any $\omega=(u,g)\in M$.

 {\rm (b)} When $\dim V^c(M)=0$, $M$ is a spatially-homogeneous $1$-cover of $H(f)$; and moreover, one has
  \begin{equation*}
\left\{
\begin{split}
 &\dim V^u(M)=\dim V^u(\Omega)\textnormal{ and }\mathrm{codim} V^s(M)=\mathrm{codim}V^s(\Omega)-1,\,\quad\,\text{ if }{\rm dim}V^u(\Omega)\text{ is odd;}\\
 &\dim V^u(M)=\dim V^u(\Omega)+1\textnormal{ and } \mathrm{codim} V^s(M)=\mathrm{codim}V^s(\Omega),\quad\,\text{ if }{\rm dim}V^u(\Omega)\text{ is even.}
\end{split}\right.
\end{equation*}
\end{lemma}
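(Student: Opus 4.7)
The plan is to first invoke Remark~\ref{invari-space} to obtain $\dim V^c(M)\le\dim V^c(\Omega)=1$, splitting into the two cases $\dim V^c(M)\in\{0,1\}$. Case~(b) is the easier one: since $\dim V^c(M)=0$ makes $M$ a hyperbolic minimal set, Lemma~\ref{hyper-minimal} directly gives that $M$ is spatially-homogeneous and a $1$-cover of $H(f)$. For the dimension formula, Remark~\ref{invari-space}, together with $\dim V^c(M)=0$ and $\dim V^c(\Omega)=1$, sandwiches $\dim V^u(\Omega)\le\dim V^u(M)\le\dim V^u(\Omega)+1$. Since $\dim V^u(\Omega)>0$ forces $\dim V^u(M)>0$ and $M$ is spatially-homogeneous, Lemma~\ref{spatial-hom} requires $\dim V^u(M)$ to be odd, which by parity pins it to the two stated identities.

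For Case~(a) with $\dim V^c(M)=1$, I first rule out $M$ being spatially-homogeneous: otherwise $\dim V^u(M)\ge\dim V^u(\Omega)>0$ combined with Lemma~\ref{spatial-hom} would force $\dim V^c(M)$ to be even, contradicting $\dim V^c(M)=1$. So $M$ is spatially-inhomogeneous. Next, for any $\omega=(u,g)\in M$, differentiating \eqref{equation-lim1} in $x$ shows $\varphi_x(t,\cdot;u,g)$ is a nontrivial solution of the variational equation \eqref{associate-eq}; by compactness of $M$ in $X\hookrightarrow C^1(S^1)$ it is uniformly bounded on $\mathbb{R}$. The bounded-solution characterization \eqref{twoside-estimate} forces $u_x\in V^c(\omega)$, whence $V^c(\omega)=\mathrm{span}\{u_x\}$.

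To establish $M^c(\omega,\delta^*)\subset\Sigma u$, the key observation is that $\sigma_a$ commutes with the flow (since $f$ is independent of $x$), so $\Pi^t(\sigma_a u,g)=(\sigma_a\varphi(t,\cdot;u,g),g\cdot t)$ for every $t$. Combining the mean-value inequality with the uniform bound on $\|\varphi_x(t,\cdot;u,g)\|$ gives $\|\Pi^t(\sigma_a u,g)-\Pi^t(u,g)\|\le C|a|$ for all $t\in\mathbb{R}$. For $|a|$ small enough, the repulsion property of $M^{cs}$ (Remark~\ref{stable-leaf}(2)) and the attracting property of $M^{cu}$ (Remark~\ref{stable-leaf}(3)) jointly put $\sigma_a u\in M^{cs}(\omega,\delta^*)\cap M^{cu}(\omega,\delta^*)=M^c(\omega,\delta^*)$. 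Since both $\{\sigma_a u:|a|<a_0\}$ and $M^c(\omega,\delta^*)$ are $1$-dimensional $C^1$ submanifolds through $u$ tangent to $V^c(\omega)=\mathrm{span}\{u_x\}$, dimension counting forces them to coincide on a neighborhood of $u$; the desired inclusion follows by shrinking $\delta^*$.

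The $1$-cover conclusion in Case~(a) is the principal obstacle. From \cite{SWZ} we already know $M$ is an almost $1$-cover of $H(f)$, i.e., there exists $g^*\in H(f)$ with $|p^{-1}(g^*)\cap M|=1$. Suppose for contradiction $(u_1,g),(u_2,g)\in M$ with $u_1\neq u_2$; by minimality pick $t_n\to-\infty$ along which $\Pi^{t_n}(u_i,g)\to(u^*,g^*)$ for both $i=1,2$. For large $n$ the backward iterates sit in a $\delta^*$-neighborhood of $u^*$, and using the inclusion $M^c(\cdot,\delta^*)\subset\Sigma$ together with the $\sigma_a$-equivariance of the flow one concludes $u_2=\sigma_a u_1$ for some $a\in S^1$. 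Proposition~\ref{order} and Lemma~\ref{refle-prop}(ii) then give $\rho_{x_0}u_i=u_i$ for $i=1,2$, which combined imply $u_1$ is $2a$-periodic. The delicate step is ruling out any nonzero $a$: the plan is to use the reflection constraint $u_x(x_0)=0$ to show that each fiber $p^{-1}(g)\cap M$ is discrete, then argue that the fiber cardinality is locally constant on the connected minimal base $H(f)$, so being $1$ at $g^*$ forces it to be $1$ everywhere, yielding $a=0$ and the desired contradiction.
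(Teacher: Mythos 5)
Most of your proposal tracks the paper's proof closely: the bound $\dim V^c(M)\le 1$ via Remark \ref{invari-space}, case (b) via Lemma \ref{hyper-minimal} plus the parity argument (the paper routes this through Lemma \ref{centerodd}, whose proof is exactly your appeal to Lemma \ref{spatial-hom}), the exclusion of spatial homogeneity in case (a), and the inclusion $M^c(\omega,\delta^*)\subset\Sigma u$ via $\sigma_a$-equivariance, the uniform $C^1$ bound, Remark \ref{stable-leaf}(2)--(3) and a one-dimensionality count. All of that is sound and is essentially the paper's argument.

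The genuine gap is in your treatment of the $1$-covering property in case (a). The paper does not prove this from scratch: it simply invokes \cite[Theorem 4.2(2)]{SWZ}, which already asserts that a spatially-inhomogeneous minimal set is a $1$-cover of $H(f)$ in this setting. Your attempted self-contained derivation breaks down at two points. First, from $\Pi^{t_n}(u_1,g)$ and $\Pi^{t_n}(u_2,g)$ both landing in a $\delta^*$-neighborhood of $u^*$ at the times $t_n$ you cannot conclude that $\varphi(t_n,\cdot;u_2,g)$ lies in $M^c(\Pi^{t_n}(u_1,g),\delta^*)$: membership in the center manifold requires (via the characterizations in Remark \ref{stable-leaf}(2)--(3)) that the difference of the two orbits stay below $\delta^*$ for all forward \emph{and} backward time, which is exactly what is not known for a merely proximal pair; so the step ``hence $u_2=\sigma_a u_1$'' is unjustified. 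Second, and more seriously, your closing step --- that the fiber cardinality of an almost $1$-cover is locally constant over the connected base $H(f)$, hence identically $1$ --- is not a valid principle: for almost $1$-covers the fiber map is only semicontinuous, the singleton fibers form a residual (not open) set, and almost automorphic minimal sets that are almost $1$-covers without being $1$-covers are precisely the standard counterexamples to this kind of argument. Ruling out the nonzero translation $a$ (equivalently, upgrading ``almost $1$-cover'' to ``$1$-cover'') is the real content of \cite[Theorem 4.2(2)]{SWZ}, and your proposal leaves it as an unproved plan.
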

\begin{proof}
 By Remark \ref{invari-space}, we have $\dim V^c(M)\le 1$. Furthermore, case (b) is directly from Lemma \ref{hyper-minimal} and Lemma \ref{centerodd}.

Now we focus on proving (a). Recall that $M$ is minimal. Then,  $M$ is either spatially-inhomogeneous; or otherwise, $M$ is spatially-homogeneous. Since $\dim V^c(M)=1$, Lemma \ref{spatial-hom} entails that $M$ is spatially-inhomogeneous; and hence, one can immediately utilize \cite[Theorem 4.2(2)]{SWZ} to obtain that $M$ is a $1$-cover of $H(f)$.

It remains to show that $M^c(\omega,\delta^*)\subset \Sigma u$ for any $\omega=(u,g)\in M$. To this end, we note that, due to the $2\pi$-periodic boundary condition, any element of $M$ will possess a spatial-period of $2\pi$, that is, $u(\cdot)=u(\cdot+2\pi)$ for any $(u,g)\in M$. Let $L\in (0,2\pi]$ be the smallest spatial-period of some element in $M$. Then the minimality of $M$ yields that $L$ is the smallest spatial-period of any element in $M$. Choose $\delta\in (0,L)$ so small that
$$\|\varphi(t,\cdot;\sigma_au,g)-\varphi(t,\cdot;u,g)\|<\delta^*, \,\textnormal{ for any }\abs{a}<\delta\,\,\textnormal{and }t\in \mathbb{R},$$ where $\delta^*>0$ is defined in Lemma \ref{invari-mani}. From Remark \ref{stable-leaf}(2)-(3), it then follows that $\sigma_au\in M^{cs}(\omega,\delta^*)\cap M^{cu}(\omega,\delta^*)$=$M^{c}(\omega,\delta^*)$ for all $\abs{a}<\delta$.

 Since $\dim V^c(M)=1$, it induces that $M^c(\omega,\delta^*)\subset \Sigma u$. Thus, we have proved the Lemma.

\end{proof}

\vskip 2mm
Recall that
\begin{equation*}
N_u=\left\{
\begin{split}
 &{\dim} V^u(\Omega),\,\quad\,\,\,\text{ if }{\rm dim}V^u(\Omega)\text{ is even,}\\
 &{\rm dim}V^u(\Omega)+1,\,\text{ if }{\rm dim}V^u(\Omega)\text{ is odd.}
\end{split}\right.
\end{equation*}

\begin{lemma}\label{norma-hyperb2}
 Assume that $\dim V^u(\Omega)>0$ and $\dim V^c(\Omega)=1$. Let $M\subset\Omega$ be a minimal set. Then for any $(u_1,g)\in M$ and $(u_2,g)\in \Omega\setminus M$, the pair $\{(u_1,g),(u_2,g)\}$ can not be two sided proximal.
\end{lemma}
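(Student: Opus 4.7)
The plan is to argue by contradiction in the spirit of Lemma~\ref{hyperbolic2}, substituting center-stable and center-unstable manifolds for the pure stable/unstable ones because now $0\in\sigma(\Omega)$. Suppose $\{(u_1,g),(u_2,g)\}$ is two-sided proximal and pick sequences $t_n\to+\infty$, $s_n\to-\infty$ along which the corresponding distances tend to $0$. By Lemma~\ref{difference-lapnumber}(c), fix $t_0$ at which $\varphi(t_0,\cdot;u_1,g)-\varphi(t_0,\cdot;u_2,g)$ has only simple zeros on $S^1$, so that Lemma~\ref{zero-cons-local} provides local constancy of the zero-number functional in an $\epsilon_0$-neighborhood at $t_0$. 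For $n$ large, Lemma~\ref{hyperbolic1}(ii) then produces intersection points
\[
\begin{aligned}
u^n_+ &\in M^{cs}(\Pi^{t_n}(u_1,g),\delta^*)\cap M^u(\Pi^{t_n}(u_2,g),\delta^*),\\
u^n_- &\in M^s(\Pi^{s_n}(u_1,g),\delta^*)\cap M^{cu}(\Pi^{s_n}(u_2,g),\delta^*),
\end{aligned}
\]
and the backward overflow of $M^u$ and forward overflow of $M^s$ (Remark~\ref{stable-leaf}(1)), together with the exponential estimates~\eqref{exponen-decrea}, yield globally defined orbits $D_n^+:=\varphi(t_0-t_n,\cdot;u^n_+,g\cdot t_n)\to\varphi(t_0,\cdot;u_2,g)$ and $D_n^-:=\varphi(t_0-s_n,\cdot;u^n_-,g\cdot s_n)\to\varphi(t_0,\cdot;u_1,g)$ as $n\to\infty$.

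Using the stable-leaf foliation of Remark~\ref{stable-leaf}(4) together with Lemma~\ref{zerocenter}(2)--(3), the $M^{cs}$-side of $u^n_+$ gives $z(\varphi(t_n,\cdot;u_1,g)-u^n_+)\ge N_u$; backward monotonicity of zero number (Lemma~\ref{difference-lapnumber}(a)) propagates this to $z(\varphi(t_0,\cdot;u_1,g)-D_n^+)\ge N_u$, and applying Lemma~\ref{zero-cons-local} at $t_0$ produces the lower bound
\[
z(\varphi(t_0,\cdot;u_1,g)-\varphi(t_0,\cdot;u_2,g))\ge N_u.
\]
The main obstacle is producing the reverse inequality from the $u^n_-$ side: the center fibre of $M^{cu}$ carries $z=N_u$, so the naive estimate gives only $z\le N_u$ (odd case) or $z\le N_u+2$ (even case)---neither sufficient for a contradiction. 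This center contribution is precisely the obstruction absent from the hyperbolic setting of Lemma~\ref{hyperbolic2}.

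To overcome it I exploit the structural dichotomy in Lemma~\ref{homogeneous}. Theorem~\ref{structure} combined with Lemma~\ref{separated} shows that distinct minimal sets of $\Omega$ are uniformly separated at the common critical point $x_0$ of Proposition~\ref{order}, which in view of two-sided proximality prevents $(u_2,g)$ from lying in a second minimal set; consequently only the ``connecting'' scenarios remain and $\omega(u_2,g),\alpha(u_2,g)\subset M$. In case~(a) of Lemma~\ref{homogeneous}, $\dim V^c(M)=1$ and $M^c(\omega,\delta^*)\subset\Sigma u$ identifies the center direction at $M$-points with spatial translations; translation invariance of~\eqref{equation-1} together with uniqueness of solutions then forces the center component of $u^n_--\varphi(s_n,\cdot;u_2,g)$, with respect to the unstable-leaf decomposition in Remark~\ref{stable-leaf}(4), to be realised by a fixed spatial translate of the $M$-section, and the $1$-cover property compels that translate to lie in $M$---contradicting $(u_2,g)\notin M$ unless the translate is trivial, i.e.\ unless the center component vanishes. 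In case~(b), $M$ is spatially-homogeneous and hyperbolic relative to $\sigma(M)$, and one adapts the spatial-derivative argument from the proof of Theorem~\ref{norma-hyper}(ii), using the reflection symmetry through Proposition~\ref{order}, to force the same vanishing. In either sub-case $u^n_-$ then actually lies in $M^u(\Pi^{s_n}(u_2,g),\delta^*)$, so Lemma~\ref{zerocenter}(2) supplies the missing bound $z\le N_u-2$ in the odd case; the even case is handled symmetrically by swapping the roles of $u^n_\pm$ and invoking the sharper bound $z\ge N_u+2$ on $M^s$ from Lemma~\ref{zerocenter}(3). The hardest step throughout is precisely this center-vanishing reduction, where the reflection symmetry and Lemma~\ref{homogeneous} carry the essential load.
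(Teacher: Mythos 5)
Your framing is right---argue by contradiction via invariant-manifold intersections and zero-number counts, with the one-dimensional center direction as the obstruction and a case split according to Lemma \ref{homogeneous}---but the mechanism you propose for removing the center contribution does not work, and it is not what the paper does. In your case (a) you try to show that the center component of $u_n^-$ relative to $M^{cu}(\Pi^{s_n}(u_2,g),\delta^*)$ vanishes by invoking $M^c(\omega,\delta^*)\subset\Sigma u$ from Lemma \ref{homogeneous}(a). But that inclusion is established only for base points $\omega=(u,g)\in M$; the manifold $M^{cu}(\Pi^{s_n}(u_2,g),\delta^*)$ is based at a point of $\Omega\setminus M$, where nothing identifies the center direction with spatial translations, so the ``center-vanishing reduction'' has no support. (Your auxiliary claim that two-sided proximality forces $\omega(u_2,g),\alpha(u_2,g)\subset M$ is also unjustified---proximality only gives nonempty intersection with $M$ along the chosen sequences---and is not needed.) The paper's case (i) instead stays on the $M$-side: it first shows $u_2\notin\Sigma u_1$; then uses \emph{both} proximality sequences together with $z(u^*-\sigma_a u^*)=N_u$ on the minimal set (\cite[Corollary 3.9]{SWZ}) to prove the uniform identity $z(u_2-\sigma_a u_1+v)=N_u$ for \emph{all} $a\in S^1$ and all small $v$; it then takes $v\in M^u(\varphi(t_n,\cdot;u_2,g),\delta^*)\cap M^{cs}(\varphi(t_n,\cdot;u_1,g),\delta^*)$, excludes $v\in M^c(\varphi(t_n,\cdot;u_1,g),\delta^*)$ via the gap $\abs{m(u_1)-m(u_2)}>0$, and uses the foliation of $M^{cs}$ over $M^c\subset\Sigma\varphi(t_n,\cdot;u_1,g)$ to place $v$ on a stable leaf over some translate $\sigma_{a_0}\varphi(t_n,\cdot;u_1,g)$, which forces $z\ge N_u+2$ against the uniform value $N_u$ (in the even case; the odd case is analogous). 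None of these steps appears in your outline.

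In your case (b), the appeal to ``adapting the spatial-derivative argument from Theorem \ref{norma-hyper}(ii)'' is likewise not an argument. The key fact when $\dim V^c(M)=0$ is spectral absorption: by Lemma \ref{homogeneous}(b) the center direction of $\Omega$ is swallowed by the stable (resp.\ unstable) spectrum of $M$ when $\dim V^u(\Omega)$ is odd (resp.\ even), so that $M^{cs}(\omega,\delta^*)=\tilde M^{s}(\omega,\delta^*)$ (resp.\ $M^{cu}(\omega,\delta^*)=\tilde M^{u}(\omega,\delta^*)$) for $\omega\in M$, as justified in Remark \ref{R:uniqueness of W-cs}. With that identity the hyperbolic argument of Lemma \ref{hyperbolic2} goes through essentially verbatim and yields the contradictory bounds $N_u\le z(u_2-u_1)\le N_u-2$ (resp.\ $N_u+2\le z(u_2-u_1)\le N_u$). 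Without these two ingredients---the uniform translate identity plus the leaf foliation in the inhomogeneous case, and the spectral absorption in the homogeneous case---your proof does not close.
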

\begin{proof}
By virtue of Lemma \ref{homogeneous}, $M$ is a $1$-cover. Moreover, it is either (i) spatially-inhomogeneous (with ${\rm dim}V^c(M)=1$), or (ii) spatially-homogeneous (with ${\rm dim}V^c(M)=0$).

Case {\rm (i)}. $M$ is spatially-inhomogeneous with ${\rm dim}V^c(M)=1$. Suppose there are $(u_1,g)\in M$ and $(u_2,g)\in \O\setminus M$ such that $\{(u_1,g),(u_2,g)\}$ forms a two sided proximal pair.  Then it is easy to see that $u_2\notin\Sigma u_1$ (Otherwise, $u_2=\sigma_{a_0}u_1$ for some $a_0\in S^1\setminus \{e\}$ and since $\{(u_1,g),(u_2,g)\}$ is  two sided proximal pair, there exist a sequence $\{t_n\}$ and $(u^*,g^*)\in M$ such that $\Pi^{t_n}(u_1,g)\to (u^*,g^*)$ and $\Pi^{t_n}(\sigma_{a_0}u_1,g)\to (u^*,g^*)$ as $n\to \infty$. This entails that $\sigma_{a_0}u^*=u^*$. By the minimality of $M$, one has $\sigma_{a_0}u_1=u_1$. So, $u_2=u_1$, a contradiction.)

Now choose two sequences $t_n\to \infty$ and $s_n\to -\infty$ such that
\begin{equation}\label{posi-proxi2}
\begin{split}
 \Pi^{t_n}(u_1,g)\to(u^*,g^*) \,\,\text{ and }\,\,
\Pi^{t_n}(u_2,g)\to(u^*,g^*)
\end{split}
\end{equation}
and
\begin{equation}\label{nega-proxi2}
\begin{split}
 \Pi^{s_n}(u_1,g)\to(u^{**},g^{**}) \,\,\text{ and }\,\,
 \Pi^{s_n}(u_2,g)\to(u^{**},g^{**})
\end{split}
\end{equation}
as $n\to\infty$. Clearly, $(u^*,g^*), (u^{**},g^{**})\in M$. It also follows from \cite[Corollary 3.9]{SWZ} that $z(u^*-\sigma_au^*)=N_u=z(u^{**}-\sigma_au^{**})$ for any $a\in S^1$ with $\sigma_ au^*\neq u^*$ and $\sigma_ au^{**}\neq u^{**}$.

Since $u_2\notin \Sigma u_1$, we claim that $z(\varphi(t,\cdot;u_2,g)-\varphi(t,\cdot;\sigma_au_1,g))\geq N_u$ for all $t\in \mathbb{R}^1$ and $a\in S^1$. Otherwise, by Lemma \ref{difference-lapnumber}, there is $a_0\in S^1$, $T\in \mathbb{R}^1$ and $N\in\mathbb{N}$ with $N<N_u$ such that $z(\varphi(t,\cdot;u_2,g)-\varphi(t,\cdot;\sigma_{a_0}u_1,g))=N$ for all $t\geq T$.
Then the continuity of $z(\cdot)$ implies that there is a small neighborhood $B(a_0)$ of $a_0$ such that
$z(\varphi(T,\cdot;u_2,g)-\varphi(T,\cdot;\sigma_{a}u_1,g))=N$ for any $a\in B(a_0)$. Since $u^*$ is spatially-inhomogeneous, one can always choose some $a_1\in B(a_0)$ such that  $\sigma_{a_1}u^*\neq u^*$.
By Lemmas \ref{difference-lapnumber} and \ref{sequence-limit}, one has $z(u^*-\sigma_{a_1}u^*)\leq N<N_u$, a contradiction. Thus, we have proved this claim.

By a similar deduction with the sequence $\{s_n\}$, one can also get $z(\varphi(t,\cdot;u_2,g)-\varphi(t,\cdot;\sigma_au_1,g))\leq N_u$ for all $t\in \mathbb{R}^1$ and $a\in S^1$. So, $$z(\varphi(t,\cdot;u_2,g)-\varphi(t,\cdot;\sigma_a u_1,g))=N_u,\,\, \text{ for all }t\in \mathbb{R}\text{ and }a\in S^1.$$  In particular, by Lemma \ref{zero-cons-local} and the compactness of $S^1$, there exists $\delta>0$ (independent of $a\in S^1$) such that \begin{equation}\label{case-A-const-1}
z(u_2-\sigma_a u_1+v)=N_u
\end{equation}
for any $a\in S^1$ and $v\in X$ with $\|v\|<\delta$.

We now consider two cases separately: $\dim V^u(\Omega)$ is even; or $\dim V^u(\Omega)$ is odd. In the following, we will prove case (i) under the assumption that $\dim V^u(\Omega)$ is even. The proof is analogous under the assumption of $\dim V^u(\Omega)$ being odd.

Using \eqref{posi-proxi2} and Lemma \ref{hyperbolic1}(ii) we obtain that there exists some $v\in M^u(\varphi(t_n,\cdot;u_2,g),\delta^*)\cap M^{cs}(\varphi(t_n,\cdot;u_1,g),\delta^*)$ for $t_n$ sufficiently large.
We now assert that $v\notin M^c(\varphi(t_n,\cdot;u_1,g),\delta^*)$. Otherwise, since $\dim V^c(M)=\dim V^c(\Omega)=1$, by Lemma \ref{homogeneous}(a), one has $v=\sigma_{a} \varphi(t_n,\cdot;u_1,g)$ for some $a\in S^1$. Noticing that $v\in M^u(\varphi(t_n,\cdot;u_1,g),\delta^*)$, then
\begin{equation}\label{backward-contracting-1}
\|\sigma_{a}u_1-u_2\|=\norm{\varphi(-t_n,\cdot;v,g\cdot t_n)-u_2}\leq Ce^{-\frac{\alpha}{2} t_n}\|v-\varphi(t_n,\cdot;u_2,g)\|\le C\delta^*e^{-\frac{\alpha}{2} t_n}.
\end{equation}
Let $\epsilon_0=|m(u_1)-m(u_2)|>0$ with $m(u_i)=\max_{x\in S^1} u_i(x)$ for $i=1,2$ (note that $\epsilon_0>0$ is due to $u_2\notin\Sigma u_1$ and \cite[Corollary 3.9 (iii)]{SWZ}). Since $X\hookrightarrow C^1(S^1)$, it is not difficult to see that $\|\sigma_{a}u_1-u_2\|\geq C_0|m(u_1)-m(u_2)|=C_0\epsilon_0$ for some constant $C_0>0$. As a consequence, by letting $t_n$  be large enough, one has $\|\sigma_{a}u_1-u_2\|<\mathrm{min}\{\delta^*,C_0\epsilon_0\}$. Thus, we have obtained a contradiction. Therefore, $v\notin M^c(\varphi(t_n,\cdot;u_1,g),\delta^*)$.

Recall that $v\in M^{cs}(\varphi(t_n,\cdot;u_1,g),\delta^*)$. Then it follows from the foliation of $M^{cs}(\varphi(t_n,\cdot;u_1,g),\delta^*)$ in Remark \ref{stable-leaf}(4) and Lemma \ref{homogeneous}(a) that there is some $a_0\in S^1$ such that $v\in M^{s}(\sigma_{a_0}\varphi(t_n,\cdot;u_1,g),\delta^*)$ with $\sigma_{a_0}\varphi(t_n,\cdot;u_1,g)\in M^c(\varphi(t_n,\cdot;u_1,g),\delta^*)$. Since $\dim V^u(\Omega)$ is even, Lemma \ref{zerocenter}(3) entails that $z(v-\sigma_{a_0}\varphi(t_n,\cdot;u_1,g))\ge N_u+2$, and hence,
\begin{equation}\label{u-3-Nu-large}
z(\varphi(-t_n,\cdot;v,g\cdot t_n)-\sigma_{a_0}u_1)\geq N_u+2.
\end{equation}
But then, one can deduce from \eqref{backward-contracting-1} that $\|\varphi(-t_n,\cdot;v,g\cdot t_n)-u_2\|<\delta$, for $t_n$ sufficiently large. Here $\delta>0$ is defined in \eqref{case-A-const-1}. As a consequence, by \eqref{case-A-const-1}, one has $z(\varphi(-t_n,\cdot;v,g\cdot t_n)-\sigma_{a_0}u_1)=z(\varphi(-t_n,\cdot;v,g\cdot t_n)-u_2+u_2-\sigma_{a_0}u_1)=z(u_2-\sigma_{a_0}u_1)=N_u$, contradicting \eqref{u-3-Nu-large}.
Thus, we have proved case (i) under the assumption of $\dim V^u(\Omega)$ being even.
\vskip 2mm

Case {\rm (ii)}. $M$ is spatially-homogeneous with ${\rm dim}V^c(M)=0$. Suppose that there are $(u_1,g)\in M$ and $(u_2,g)\in \O\setminus M$ such that $\{(u_1,g),(u_2,g)\}$ forms a two sided proximal pair.
Again, we will prove case (ii) under the assumption that $\dim V^u(\Omega)$ is odd; or  $\dim V^u(\Omega)$ is even, separately.

We first consider the assumption of $\dim V^u(\Omega)$ being odd. By virtue of Lemma \ref{homogeneous}(b), one can choose $\delta^*>0$ so small that
\begin{equation}\label{E:cs=unstable}
M^{u}(\omega,\delta^*)=\tilde M^u(\omega,\delta^*)\,\text{ and }\,M^{cs}(\omega,\delta^*)=\tilde M^{s}(\omega,\delta^*),
\end{equation}
where $\tilde M^u,\tilde M^s$ denote respectively the local unstable and stable manifolds of $\omega\in M$ with respect to the Sacker-Sell spectrum $\sigma(M)$ (see more discussion on \eqref{E:cs=unstable} in Remark \ref{R:uniqueness of W-cs}(i)).

For simplicity, we may assume that $u_2-u_1$ has only simple zeros on $S^1$. Then, by Lemma \ref{zero-cons-local}, there are $\delta>0$ and $N\in\mathbb{N}$, such that for any $v\in X$ with $\|v\|<\delta$,
\begin{equation}\label{E:zero-control}
  z(u_2-u_1+v)=N.
\end{equation}
Since $\{(u_1,g),(u_2,g)\}$ forms a two sided proximal pair, there are two sequences $t_n\to \infty$ and $s_n\to -\infty$ such that
\begin{equation*}
\begin{split}
 \|\varphi(t_n,\cdot;u_1,g)-\varphi(t_n,\cdot;u_2,g)\|\to 0\,\,\text{ and }\,\,
 \|\varphi(s_n,\cdot;u_1,g)-\varphi(s_n,\cdot;u_2,g)\|\to 0, \,\,\text{ as }n\to \infty.
\end{split}
\end{equation*}
By Lemma \ref{hyperbolic1}, one has $M^{u}(\Pi^{t_n}(u_2,g),\delta^*)\cap M^{cs}(\Pi^{t_n}(u_1,g),\delta^*)\neq \emptyset$ and $M^{u}(\Pi^{s_n}(u_2,g),\delta^*)\cap M^{cs}(\Pi^{s_n}(u_1,g),\delta^*)\neq \emptyset$ for $n$ sufficiently large. Then \eqref{E:cs=unstable} entails that
 $$M^{u}(\Pi^{s_n}(u_2,g),\delta^*)\cap \tilde M^{s}(\Pi^{s_n}(u_1,g),\delta^*)\neq \emptyset$$
for $n$ sufficiently large. Choose some $u_n^+\in M^{u}(\Pi^{t_n}(u_2,g),\delta^*)\cap  M^{cs}(\Pi^{t_n}(u_1,g),\delta^*)$ and $u_n^-\in M^{u}(\Pi^{s_n}(u_2,g),\delta^*)\cap \tilde M^{s}(\Pi^{s_n}(u_1,g),\delta^*)$. Let $u_2^*=\varphi(-t_n,\cdot;u_n^+,g)$ and $u_1^*=\varphi(-s_n,\cdot;u_n^-,g)$. Then \eqref{exponen-decrea} implies that
\begin{eqnarray*}\label{A-estimate-forward}
  \|u^*_2-u_2\|&=&\|\varphi(-t_n,\cdot;u_n^{+},g\cdot t_n)-\varphi(-t_n,\cdot;\varphi(t_n,\cdot;u_2,g),g\cdot t_n)\| \notag\\
  &\leq& Ce^{-\frac{\alpha}{2}t_n}\|u_n^+-\varphi(t_n,\cdot;u_2,g)\|
 \end{eqnarray*} and
\begin{eqnarray*}\label{A-estimate-backward}
\|u^*_1-u_1\|&=&\|\varphi(-s_n,\cdot;u_n^{-},g\cdot s_n)-\varphi(-s_n,\cdot;\varphi(s_n,\cdot;u_1,g),g\cdot s_n)\| \notag\\
&\leq& Ce^{\frac{\alpha}{2}s_n}\|u_n^--\varphi(s_n,\cdot;u_1,g)\|.
\end{eqnarray*}
Consequently, one has $\|u^*_2-u_2\|<\delta$ and $\|u^*_1-u_1\|<\delta$ for $n$ sufficiently large, where $\delta$ is from \eqref{E:zero-control}. Thus, $z(u_2-u_1)=z(u^*_2-u_1)\geq z(u^+_n-\varphi(t_n,\cdot;u_1,g))$. Noticing that $u_n^+\in M^{cs}(\Pi^{t_n}(u_1,g),\delta^*)$, by Lemma \ref{zerocenter}(2), $z(u^+_n-\varphi(t_n,\cdot;u_1,g))\geq N_u$, and hence, $z(u_2-u_1)\geq N_u$. On the other hand, $z(u_2-u_1)=z(u_2-u^*_1)\leq z(\varphi(s_n,\cdot;u_2,g)-u^-_n)$. Recall that $u_n^-\in M^{u}(\Pi^{s_n}(u_2,g),\delta^*)$. Then Lemma \ref{zerocenter}(2) implies that $z(\varphi(s_n,\cdot;u_2,g)-u^-_n)<N_u$, which entails that $z(u_2-u_1)<N_u$, a contradiction.

Now we consider the assumption of $\dim V^u(\Omega)$ being even. Also by Lemma \ref{homogeneous}(b), one can choose $\delta^*>0$ small enough such that
\begin{equation}\label{E:cu=unstable}
M^{cu}(\omega,\delta^*)=\tilde M^u(\omega,\delta^*)\,\text{ and }\,M^{s}(\omega,\delta^*)=\tilde M^{s}(\omega,\delta^*).
\end{equation}
(see more discussion on \eqref{E:cu=unstable} also in Remark \ref{R:uniqueness of W-cs}(ii)).
Now we suppose that
\begin{equation*}
\begin{split}
 \|\varphi(t_n,\cdot;u_1,g)-\varphi(t_n,\cdot;u_2,g)\|\to 0\,\,\text{ and }\,\,
 \|\varphi(s_n,\cdot;u_1,g)-\varphi(s_n,\cdot;u_2,g)\|\to 0, \,\,\text{ as }n\to \infty,
\end{split}
\end{equation*}
where $t_n\to \infty$ and $s_n\to -\infty$. By Lemma \ref{hyperbolic1}, for $n$ sufficiently large, one has $M^{cu}(\Pi^{t_n}(u_1,g),\delta^*)\cap M^{s}(\Pi^{t_n}(u_2,g),\delta^*)\neq \emptyset$ and $M^{cu}(\Pi^{s_n}(u_1,g),\delta^*)\cap M^{s}(\Pi^{s_n}(u_2,g),\delta^*)\neq \emptyset$. Using \eqref{E:cu=unstable}, one has

$$\tilde M^{u}(\Pi^{t_n}(u_1,g),\delta^*)\cap  M^{s}(\Pi^{t_n}(u_2,g),\delta^*)\neq \emptyset$$
for $n$ sufficiently large. Now choose $w_n^+\in \tilde M^{u}(\Pi^{t_n}(u_1,g),\delta^*)\cap M^{s}(\Pi^{t_n}(u_2,g),\delta^*)$ and $w_n^-\in M^{cu}(\Pi^{s_n}(u_1,g),\delta^*)\cap M^{s}(\Pi^{s_n}(u_2,g),\delta^*)$. Let $u_1^{**}=\varphi(-t_n,\cdot;w_n^+,g\cdot t_n)$ and $u_2^{**}=\varphi(-s_n,\cdot;u_n^-,g\cdot s_n)$. Then by \eqref{exponen-decrea},
\begin{equation*}\label{B-estimate}
\begin{split}
  &\|u^{**}_1-u_1\|=\|\varphi(-t_n,\cdot;w_n^{+},g\cdot t_n)-\varphi(-t_n,\cdot;\varphi(t_n,\cdot;u_1,g),g\cdot t_n)\|\\
  &\leq Ce^{-\frac{\alpha t_n}{2}}\|w_n^+-\varphi(t_n,\cdot;u_1,g)\|,\\
  &\|u^{**}_2-u_2\|=\|\varphi(-s_n,\cdot;w_n^{-},g\cdot s_n)-\varphi(-s_n,\cdot;\varphi(s_n,\cdot;u_2,g),g\cdot s_n)\|\\
  &\leq  Ce^{\frac{\alpha s_n}{2}}\|w_n^--\varphi(s_n,\cdot;u_2,g)\|
\end{split}
\end{equation*}
Thus, $\|u^{**}_1-u_1\|<\delta$ and $\|u^{**}_2-u_2\|<\delta$ for $n\gg 1$. Then, $z(u_2-u_1)=z(u_2-u^{**}_1)\geq z(\varphi(t_n,\cdot;u_2,g)-w^+_n)$. Since $w^+_n\in M^{s}(\Pi^{t_n}(u_2,g),\delta^*)$, by Lemma \ref{zerocenter}(3), $z(\varphi(t_n,\cdot;u_2,g)-w^+_n)\geq N_u+2$. Consequently, $z(u_2-u_1)\geq N_u+2$. Similarly, one has $z(u_2-u_1)=z(u^{**}_2-u_1)\leq z(w^-_n-\varphi(s_n,\cdot;u_1,g))\leq  N_u$, a contradiction.

Thus, we have completed the proof of this lemma.
\end{proof}
\vskip 2mm

\begin{remark}\label{R:uniqueness of W-cs}
{\rm
(i) Let $\delta^*$ be defined in Lemma \ref{invari-mani}. It deserves to point out that one can choose some smaller $\delta^*_1\in (0,\delta^*)$, if necessary, such that $M^{cs}(\omega,\delta^*_1)=\tilde M^{s}(\omega,\delta^*_1)$, which entails that in our case $M^{cs}(\omega,\delta^*_1)$ is unique (note that in general case $M^{cs}(\omega,\delta^*_1)$ could be not unique). Indeed, choose $\delta_1^*$ so small, one can obtain that
$$\|\varphi(t,\cdot;w,g)-\varphi(t,\cdot;u,g)\|<Ce^{-\beta t}\norm{w-u}<C\delta^*_1<\delta^*, \quad t\ge 0,$$ for any
$w\in \tilde{M}^{s}(\omega,\delta^*_1)$. Then for any $M^{cs}(\omega,\delta^*)$, it follows from Remark \ref{stable-leaf}(2) that $w\in M^{cs}(\omega,\delta^*)$. Note also $\norm{w-u}<\delta^*_1$. Then $w\in M^{cs}(\omega,\delta^*_1)$. This implies that $\tilde{M}^{s}(\omega,\delta^*_1)\subset M^{cs}(\omega,\delta^*_1)$. Now, given any $w\in M^{cs}(\omega,\delta^*_1)$, by the graph-expression of $M^{cs}(\omega,\delta^*_1)$ (as in Lemma \ref{invari-mani}), one can find some $v\in X^{cs}(\omega)$ with $\norm{v}<\delta^*_1$ such that
$w=u+v+h^{cs}(v,\omega)$. Since $X^{cs}(\omega)=X^s(\omega)\oplus X^c(\omega)=\tilde{X}^{s}(\omega)$ for any $\omega\in M$, one has $w_1:=u+v+\tilde{h}^s(v,\omega)\in \tilde{M}^{s}(\omega,\delta^*_1)$. Recall that  $\tilde{M}^{s}(\omega,\delta^*_1)\subset M^{cs}(\omega,\delta^*_1)$. Then $w_1\in  M^{cs}(\omega,\delta^*_1)$. By the graph-property of $M^{cs}(\omega,\delta^*_1)$, it yields that
$h^{cs}(v,\omega)=\tilde{h}^s(v,\omega)$, which implies that $w=w_1\in \tilde{M}^{s}(\omega,\delta^*_1)$. Therefore, $M^{cs}(\omega,\delta^*_1)=\tilde M^{s}(\omega,\delta^*_1)$. Thus, the uniqueness of $\tilde{M}^s(\omega,\delta^*_1)$ immediately implies that $M^{cs}(\omega,\delta^*_1)$ is unique regardless of the choose of the cut-off function.

(ii) Similarly as in the above argument, we can also utilize Remark \ref{stable-leaf}(3) to obtain that one can choose smaller $\delta^*_1\in (0,\delta^*)$ such that $M^{cu}(\omega,\delta^*_1)=\tilde M^{u}(\omega,\delta^*_1)$ is unique regardless of the choose of the cut-off function.
}
\end{remark}
Now we are ready to prove Theorem \ref{norma-hyper}(i).

\begin{proof}[Proof of Theorem \ref{norma-hyper}{\rm (i)}]

Assume that $\dim V^u(\O)>0$. Then Lemma \ref{homogeneous} implies that any minimal set $M\subset \O$ is either a spatially-homogeneous $1$-cover with $\dim V^c(M)=0$, or a spatially-inhomogeneous $1$-cover with $\dim V^c(M)=1$.

 {\it We claim that the $\omega$-limit set $\O$ itself is minimal.} Before we give the proof of this claim, it is clear from \cite[Theorem 4.2(ii)]{SWZ} that this claim implies Theorem \ref{norma-hyper}(i) immediately.

 Now we are focusing on the proof of this claim. By the structure of $\O$ given in Theorem \ref{structure}, to prove this claim, it suffices to show that neither (ii) nor (iii) in Theorem \ref{structure} can hold.

Case (a): $\omega(u_0,g_0)=M_1\cup M_{11}$ as in Theorem \ref{structure}(ii), where $M_1$ is a $1$-cover. Fix any $(u_{11},g)\in M_{11}$ and let $(u_1,g)=M_1\cap p^{-1}(g)$. Then it follows from Theorem \ref{structure}(ii) that $M_1\subset \alpha(u_{11},g)\cap \omega(u_{11},g)$. Since  $M_1$ is an $1$-cover, $\{(u_{11},g),(u_1,g)\}$ forms a two sided proximal pair, contradicting Lemma \ref{norma-hyperb2}. So, Theorem \ref{structure}(ii) cannot happen.

Case (b): $\omega(u_0,g_0)=M_1\cup M_2\cup M_{12}$ as in Theorem \ref{structure}(iii), where $M_1$, $M_2$ are both $1$-covers of $H(f)$. Fix any $(u_{12},g)\in M_{12}$ and let $(u_i,g)=M_i\cap p^{-1}(g)$ for $i=1,2$. Clearly, $u_{12}\ne u_i$ for $i=1,2.$ We will discuss the following three alternatives separately:

(b1) Both $M_1$ and $M_2$ are spatially-inhomogeneous;

(b2) Both $M_1$ and $M_2$ are spatially-homogeneous;

(b3) One is spatially-homogeneous, the other is spatially-inhomogeneous.

For case (b1):
 By Theorem \ref{structure}(iii), we may assume that $\omega(u_{12},g)\cap M_1\neq \emptyset$ and $\alpha(u_{12},g)\cap M_2\neq \emptyset$. This then implies that $\a(u_{12},g)\cap M_1=\emptyset$ and $\omega(u_{12},g)\cap M_2=\emptyset$ (Otherwise, $\{(u_1,g),(u_{12},g)\}$ or $\{(u_2,g),(u_{12},g)\}$ will  form a two sided proximal pair, which contradicts Lemma \ref{norma-hyperb2}). Similarly as in the assertion in the proof of Theorem \ref{hyperbolic0}, we can obtain
 \begin{equation}\label{E:approach-each}
 \Pi^{t}(u_{12},g)-\Pi^{t}(u_{1},g)\to 0,\,\, \text{ as }t\to\infty.
 \end{equation}
 As a consequence, by Remark \ref{stable-leaf}(2), $\varphi(t,\cdot;u_{12},g)\in M^{cs}(\Pi^t(u_{1},g),\delta^*)$ for $t\gg 1$.
 Furthermore, {\it we assert that $\varphi(t,\cdot;u_{12},g)\in M^{s}(\Pi^t(u_{1},g),\delta^*)$ for $t\gg 1$.} In fact, fix any $T\gg 1$ with $\varphi(T,\cdot;u_{12},g)\in M^{cs}(\Pi^T(u_{1},g),\delta^*)$, by the foliation of $M^{cs}(\Pi^T(u_{1},g),\delta^*)$ in Remark \ref{stable-leaf}(4), there exists a $u_c\in M^c(\Pi^T(u_{1},g),\delta^*)$ such that $\varphi(T,\cdot;u_{12},g)\in \bar{M}_s(u_c,\Pi^T(u_{1},g),\delta^*)$. By Lemma \ref{homogeneous}(a), we have $M^c((\Pi^T(u_{1},g),\delta^*)\subset \Sigma(\varphi(T,\cdot;u_1,g))$. So, one can find some $a\in S^1$ such that $u_c=\varphi(T,\cdot;\sigma_au_1,g)$; and hence, we have
 $$\varphi(T,\cdot;u_{12},g)\in \bar{M}_s(\varphi(T,\cdot;\sigma_au_1,g),\Pi^T(u_{1},g),\delta^*).$$ This entails that $\norm{\varphi(t+T,\cdot;u_{12},g)-\varphi(t+T,\cdot;\sigma_au_1,g)}\to 0$ as $t\to \infty$. As a consequence, $\Pi^{t}(u_{12},g)-\Pi^{t}(\sigma_au_{1},g)\to 0$ as $t\to\infty$. Together with \eqref{E:approach-each}, we obtain that $\sigma_a u_{1}=u_{1}$, which implies that $\varphi(T,\cdot;u_{12},g)\in M^{s}(\Pi^T(u_{1},g),\delta^*)$. By the arbitrariness of $T\gg 1$, one has $\varphi(t,\cdot;u_{12},g)\in M^{s}(\Pi^t(u_{1},g),\delta^*)$ for all $t\gg 1$. Thus, we have proved the assertion.

Similarly as in the proof of Lemma \ref{norma-hyperb2}, we consider separately the situation that $\dim V^u(\Omega)$ is even and $\dim V^u(\Omega)$ is odd. We again only deal with the situation of $\dim V^u(\Omega)$ being even.
By Lemma \ref{difference-lapnumber} and Lemma \ref{zerocenter}(3), the assertion above implies that
 \begin{equation}\label{E:normal-hy-1}
   z(\varphi(t,\cdot;u_{12},g)-\varphi(t,\cdot;u_1,g))\geq N_u+2, \quad t\in \mathbb{R}^1,
 \end{equation}
and
 \begin{equation}\label{E:normal-hy-2}
   z(\varphi(t,\cdot;u_{12},g)-\varphi(t,\cdot;u_2,g))\leq N_u, \quad t\in \mathbb{R}^1.
 \end{equation}
On the one hand, by \eqref{E:approach-each} and \eqref{E:normal-hy-2}, similarly as \eqref{E:lower-bdd-1-1} we have
 \begin{equation}\label{E:normal-hy-3}
 \begin{split}
   z(\varphi(t,\cdot;u_{1},g)-\varphi(t,\cdot;u_2,g))
   =z(\varphi(t,\cdot;u_{12},g)-\varphi(t,\cdot;u_2,g))\leq N_u,\quad \text{for}\ t\gg 1.
 \end{split}
 \end{equation}
On the other hand, by \eqref{E:normal-hy-1}, similarly as \eqref{E:lower-bdd-1-1} we have
  \begin{equation}\label{E:normal-hy-4}
 \begin{split}
   &z(\varphi(t,\cdot;u_{2},g)-\varphi(t,\cdot;u_1,g))\\
   &=z(\varphi(t,\cdot;u_{12},g)-\varphi(t,\cdot;u_1,g))\geq N_u+2,\quad \text{for}\ t\ll -1.
 \end{split}
 \end{equation}
By \eqref{E:normal-hy-3}-\eqref{E:normal-hy-4}, we obtain a contradiction to Lemma \ref{constant-on-twosets}. Thus, case (b1) is impossible.
\vskip 2mm

For case (b2): Again, by Theorem \ref{structure}(iii), we may assume that $\omega(u_{12},g)\cap M_1\neq \emptyset$ and $\alpha(u_{12},g)\cap M_2\neq \emptyset$. This then implies that $\a(u_{12},g)\cap M_1=\emptyset$ and $\omega(u_{12},g)\cap M_2=\emptyset$ (Otherwise, $\{(u_1,g),(u_{12},g)\}$ or $\{(u_2,g),(u_{12},g)\}$ will  form a two sided proximal pair. Note that $M_1,M_2$ are spatially-homogeneous. This contradicts Lemma \ref{norma-hyperb2}(ii).) Similarly as \eqref{E:approach-each} and the assertion in the proof of Theorem \ref{hyperbolic0},
we obtain that
\begin{equation}\label{E:double-homo-1}
\Pi^{t}(u_{12},g)-\Pi^{t}(u_{1},g)\to 0\text{ as }t\to\infty
 \end{equation}
and
\begin{equation}\label{E:double-homo-2}
 \Pi^{t}(u_{12},g)-\Pi^{t}(u_{2},g)\to 0\text{ as }t\to-\infty.
 \end{equation}
Consequently, it follows from the hyperbolicity of $M_1,M_2$ that  $\Pi^t(u_{12},g)\in \tilde M^s_1((\Pi^t(u_{1},g),\delta^*)$ for $t\gg 1$ and $\Pi^t(u_{12},g)\in \tilde M^u_2((\Pi^t(u_{2},g),\delta^*)$ for $t\ll- 1$, where $\tilde M_i^u(\omega,\delta^*),\tilde M_i^s(\omega,\delta^*), i=1,2,$ denote respectively the local unstable and stable manifolds of $\omega\in M_i$ with respect to the Sacker-Sell spectrum $\sigma(M_i)$.

Then, by Lemma \ref{homogeneous}(b), Lemma \ref{zerocenter}(1) and Lemma \ref{difference-lapnumber}(a), one has
 \begin{equation}\label{F:lower-bdd-1}
   z(\varphi(t,\cdot;u_{1},g)-\varphi(t,\cdot;u_{12},g))\geq  N_u, \quad t\in \mathbb{R}^1.
 \end{equation}

Recall that $\Pi^{t}(u_{12},g)-\Pi^{t}(u_{2},g)\to 0$ as $t\to -\infty$. Then it follows from \eqref{F:lower-bdd-1}, similarly as in \eqref{E:lower-bdd-1-1}, that we have
 \begin{equation*}
 \begin{split}
   &z(\varphi(t,\cdot;u_{1},g)-\varphi(t,\cdot;u_2,g))\\
   =&z(\varphi(t,\cdot;u_{1},g)-\varphi(t,\cdot;u_{12},g))\geq N_u>0,\quad \text{for}\ t\,\text{ sufficiently negative},
 \end{split}
 \end{equation*}
which contradicts $z(\varphi(t,\cdot;u_{1},g)-\varphi(t,\cdot;u_2,g))=0$ (since both $u_1$ and $u_2$ are spatially-homogeneous).
Thus, case (b2) should not happen.

\vskip 3mm

For case (b3): We may assume without loss of generality that $M_1$ is spatially-homogeneous and $M_2$ is spatially-inhomogeneous.  By Theorem \ref{structure}(iii) again, we may also assume that $\omega(u_{12},g)\cap M_1\neq \emptyset$ and $\alpha(u_{12},g)\cap M_2\neq \emptyset$. So, by Lemma \ref{norma-hyperb2}(i)-(ii) and the same argument in case (b1) and (b2), we have
$\a(u_{12},g)\cap M_1=\emptyset$ and $\omega(u_{12},g)\cap M_2=\emptyset$. Therefore, one can further obtain \eqref{E:double-homo-1} and \eqref{E:double-homo-2} similarly.

When $\dim V^u(\Omega)$ is odd (resp.  $\dim V^u(\Omega)$ is even), it follows from \eqref{E:double-homo-1} and \eqref{E:cs=unstable} that
\begin{equation}\label{E:homo-inhomo-2sided-1}
\Pi^{t}(u_{12},g)\in M^{cs}(\Pi^t(u_1,g),\delta^*)=\tilde M^{s}(\Pi^t(u_1,g),\delta^*), \,\text{ for }t\gg 1;
\end{equation}and from \eqref{E:double-homo-2}, one has $$\Pi^{t}(u_{12},g)\in M^{cu}(\Pi^t(u_2,g),\delta^*), \,\text{ for }t\ll -1.$$
Recall that $M_2$ is inhomogeneous. Together with Remark \ref{stable-leaf} and Lemma \ref{homogeneous}(a), one can repeat the similar argument in the proof of the assertion after \eqref{E:approach-each} to obtain
 $$\Pi^{t}(u_{12},g)\in M^{u}(\Pi^t(u_2,g),\delta^*), \,\text{ for }t\ll -1.$$

So, by Lemma \ref{zerocenter}(2) (resp. Lemma \ref{zerocenter}(3)), we have
\begin{equation}\label{G:normal-hy-2}
  z(\varphi(t,\cdot;u_{12},g)-\varphi(t,\cdot;u_{2},g))\le
   \left\{
\begin{split}
 &N_u-2,\,\,\,\,\text{ if }{\rm dim}V^u(\Omega)\text{ is odd,}\\
 &N_u,\,\quad\quad\text{ if }{\rm dim}V^u(\Omega)\text{ is even,}
\end{split}\right.
\end{equation}
for all $t\in \mathbb{R}.$

On the other hand, by letting $E=M_1$ in Lemma \ref{zerocenter} and noticing that $\dim \tilde M^u_1$
 is always odd, we deduce from by Lemma \ref{homogeneous}(b), Lemma \ref{zerocenter}(1) and \eqref{E:homo-inhomo-2sided-1} that
\begin{equation}\label{G:normal-hy-1}
  z(\varphi(t,\cdot;u_{12},g)-\varphi(t,\cdot;u_{1},g))\ge\dim \tilde M^u_1+1,\,\quad \text{for all}\ t\in \mathbb{R}.
\end{equation}
Since
\begin{equation*}\label{G:normal-hy-1-1}
  \dim \tilde M^u_1=\left\{
\begin{split}
 &{\rm dim}V^u(\Omega),\,\,\,\,\quad\,\,\text{ if }{\rm dim}V^u(\Omega)\text{ is odd,}\\
 &{\rm dim}V^u(\Omega)+1,\,\,\text{ if }{\rm dim}V^u(\Omega)\text{ is even,}
\end{split}\right.
\end{equation*}
it follows from \eqref{L:lap-classify} that $\dim \tilde M^u_1=N_u-1$, if ${\rm dim}V^u(\Omega)\text{ is odd;}$ and $\dim \tilde M^u_1=N_u+1$, if ${\rm dim}V^u(\Omega)\text{ is odd.}$ Therefore,  \eqref{G:normal-hy-1} entails that
\begin{equation}\label{G:normal-hy-2-2}
  z(\varphi(t,\cdot;u_{12},g)-\varphi(t,\cdot;u_{2},g))\ge
   \left\{
\begin{split}
 &N_u,\,\,\quad\quad\text{ if }{\rm dim}V^u(\Omega)\text{ is odd,}\\
 &N_u+2,\,\,\,\text{ if }{\rm dim}V^u(\Omega)\text{ is even,}
\end{split}\right.
\end{equation}
for all $t\in \mathbb{R}.$

For \eqref{G:normal-hy-2}, similarly as \eqref{E:lower-bdd-1-1} we utilize \eqref{E:double-homo-1} to obtain that
\begin{equation}\label{G:normal-hy-4}
 \begin{split}
   &z(\varphi(t,\cdot;u_{2},g)-\varphi(t,\cdot;u_1,g))\\
   &=z(\varphi(t,\cdot;u_{12},g)-\varphi(t,\cdot;u_2,g))\leq N_u-2\,\, (\text{resp. } N_u),\quad \text{for}\ t\gg 1.
 \end{split}
 \end{equation}
Similarly, we use \eqref{G:normal-hy-2-2} and \eqref{E:double-homo-2} to obtain that
\begin{equation}\label{G:normal-hy-3}
 \begin{split}
   &z(\varphi(t,\cdot;u_{2},g)-\varphi(t,\cdot;u_1,g))\\
   &=z(\varphi(t,\cdot;u_{12},g)-\varphi(t,\cdot;u_1,g))\geq N_u\,\, (\text{resp. } N_u+2),\quad \text{for}\ t\ll -1.
 \end{split}
 \end{equation}
However,  by Lemma \ref{constant-on-twosets}, it is already known that
\begin{equation*}
z(\varphi(t,\cdot;u_{2},g)-\varphi(t,\cdot;u_1,g))=\text{constant},\quad \text{for all}\ t\in \mathbb{R},
\end{equation*}
a contradiction to \eqref{G:normal-hy-4}-\eqref{G:normal-hy-3}. Thus, case (b3) cannot happen.

Up to now, we have shown that none of case (b1)-(b3) can happen, which implies that Theorem \ref{structure}(iii) can not happen at all. Thus, we have proved the {\it claim that the $\omega$-limit set $\O$ itself is minimal}, which completes our proof.
\end{proof}

\section{Examples}
In this section, we will present some examples related to Theorem 3.1 and Theorem 5.1.
As for Theorem \ref{structure} (concerning with the structure of $\Omega$), we will give some example to show that case (ii)-(iii) in Theorem \ref{structure} indeed may occur. For Theorem \ref{norma-hyper} (concerning with the structure of $\Omega$ with $\dim V^c(\Omega)=1$), we will also present an example to show that,  if $\dim V^c(\Omega)\ge 2$, then the conclusion in Theorem \ref{norma-hyper} cannot hold anymore.

 \vbox{}
  Example 6.1. Consider the following parabolic equation (motivated by \cite{ShenYi-2}):
\begin{equation}\label{examp1}
  u_t=u_{xx}+(f(t)-\lambda)u,\,\,t>0,\,x\in S^{1}=\mathbb{R}/2\pi \mathbb{Z},
\end{equation}
where $f(t)=-\sum_{k=1}^{\infty}2^{-k}\pi \mathrm{sin}(2^{-k}\pi t)$ is an almost periodic function, and $\lambda$ is an eigenvalue of $\frac{\partial^2}{\partial x^2}: H^2(S^1)\to L^2(S^1)$.

The skew-product semiflow $\Pi^t$ on $X\times H(f)$ is
\begin{equation}
  \Pi^t(u,g)=(\varphi(t,\cdot; u,g),g\cdot t),
\end{equation}
where $X$ is the fractional power space defined in the introduction. Let $u_0$ be an eigenfunction of $\frac{\partial^2}{\partial x^2}: H^2(S^1)\to L^2(S^1)$ corresponding to $\lambda$. Then $\varphi(t,\cdot; u_0,f)=e^{\int_{0}^t f(s) ds}u_0$ is a solution of \eqref{examp1}.

Following the discussion in \cite{Sacker77,ShenYi-2}, the function $\psi(t)=e^{\int_{0}^t f(s) ds}$ satisfies the following properties:

(a) $\psi(t)$ is bounded for $t\geq 0$;

(b) There exists $t_n\to\infty$ such that $\psi(t_n)\to 0$ as $n\to\infty$, and $\psi(2^n)\geq e^{-2\pi-2}$ for $n=1,2,\cdots$;

(c) For any sequence $t_n\to\infty$ such that $\lim_{n\to\infty}\psi(t+t_n)=\psi^*(t)$ exists, $\psi^*(t)$ is not almost periodic if it is nonzero.

By virtue of (b), $\omega(u_0,f)$ is not minimal and $\{0\}\times H(f)$ is the only minimal set in $\omega(u_0,f)$. Moreover, $\omega(u_0,f)$ is an almost $1$-cover of $H(f)$ (see \cite[p.396]{ShenYi-2}).

\vskip 2mm
The linearized variational equation of \eqref{examp1} at $(u,g)\in \omega(u_0,f)$ is
\begin{equation}\label{linear-variation}
  v_t=v_{xx}+(g(t)-\lambda)v.
\end{equation}
Clearly, $e^{-k^2 t+\int_0^t (g(s)-\lambda)ds}\mathrm{sin}(kx)$ and $e^{-k^2 t+\int_0^t (g(s)-\lambda)ds}\mathrm{cos}(kx)$ are solutions of \eqref{linear-variation} for $k\in \mathbb{N}$). Moreover, these functions form a Floquet basis of \eqref{linear-variation}, and the associated Floquet spaces are $W_0=\mathrm{span}\{e^{\int_0^t (g(s)-\lambda)ds}\}$ and
\begin{equation}\label{floquet-1}
W_k=\mathrm{span}\{e^{-k^2 t+\int_0^t (g(s)-\lambda)ds}\mathrm{sin}(kx), e^{-k^2 t+\int_0^t (g(s)-\lambda)ds}\mathrm{cos}(kx)\},\,
\,k=1,2,\cdots.
\end{equation} Such Floquet spaces are exponentially separated. Note also that
$\lim_{\abs{t}\to \infty}\frac{1}{t}\int_{0}^tg(s)ds=0$ (see \cite{Fink,Shen1995114}), it follows that the Sacker-Sell spectrum $\sigma(\omega(u_0,f))=\{-\lambda-k^2:k\in \mathbb{N}\}$.

\vskip 3mm
\textbf{Case (i)}: Let $\lambda=0$ (which is the first eigenvalue of $\frac{\partial^2}{\partial x^2}: H^2(S^1)\to L^2(S^1)$) and the corresponding eigenfunction $u_0\equiv 1$ is spatially-homogeneous). Then, it is clear that $\omega(u_0,f)$ is spatially-homogeneous and $\sigma(\omega(u_0,f))=\{0,-1,\cdots,-k^2,\cdots\}$ (Thus, the upper Lyapunov exponent with respect to \eqref{linear-variation} is $0$. Hence, for this case, one may also call that $\omega(u_0,f)$ is linearly stable by following \cite[p.36, Definition II.4.3]{Shen1998}). Moreover, one has $\dim V^u(\omega(u_0,f))=0$ and $\dim V^c(\omega(u_0,f))=1$. As we mentioned in the previous paragraph, $\omega(u_0,f)$ is not minimal and $\{0\}\times H(f)$ is the only minimal in $\omega(u_0,f)$. Therefore, it belongs to Theorem \ref{structure}(ii). Furthermore, it is also an example indicating that $\omega(u_0,f)$ need not be minimal even if it belongs to Theorem \ref{norma-hyper}(ii).

\vskip 2mm
\textbf{Case (ii)}: Let $\lambda=-1$ (which is the second eigenvalue of $\frac{\partial^2}{\partial x^2}: H^2(S^1)\to L^2(S^1)$) and choose the corresponding eigenfunction $u_0=\mathrm{sin}x$). Then, the corresponding solution is $\varphi(t,x,u_0,g)=e^{\int_0^tf(s)ds} \sin x$. So, $u$ is spatially-inhomogeneous for all $(u,g)\in \omega(u_0,f)\setminus\{0\}\times H(f)$.
Moreover, we have $\sigma(\omega(u_0,f))=\{1,0,\cdots,1-k^2,\cdots\}$. Together with \eqref{floquet-1}, we also obtain that $\dim V^c(\omega(u_0,f))=2$.
Note also that $\omega(u_0,f)$ is not minimal. Thus, such an example shows that,  if $\dim V^c(\Omega)\ge 2$, then neither (i) nor (ii) in Theorem \ref{norma-hyper} need hold.

\vbox{}
\textbf{Example 6.2}. Consider
\begin{equation}
\label{neq-example}
u_t=u_{xx}-(a(t)\cos u+b(t)\sin u)\sin u,\,\, t>0,\,\, x\in S^1=\mathbb{R}/2\pi \mathbb{Z},
\end{equation}
where $f(t)=(a(t),b(t))$ is almost periodic and satisfies that the ODE
\begin{equation}
\label{neq-ode}
y^{'}=a(t)y+b(t)
\end{equation}
admits no almost periodic solution and the solution $y_0(t)$ of \eqref{neq-ode} with $y_0(0)=0$ is bounded (see  \cite{Johnson1981} for the existence of such $a(t)$ and $b(t)$). Then there are $(u_0,g_0)\in X\times H(f)$ such that
$\omega(u_0,g_0)$  contains two minimal sets (see \cite{Shen1995114} for the detail). So, this example indicates that Theorem \ref{structure}(ii) can indeed occur.

\section*{Acknowledgements}
The authors are greatly indebted to an anonymous referee for very careful reading and providing lots of very inspiring and helpful comments and suggestions which led to much improvement of two earlier versions of this paper.

\end{document}